\documentclass[reqno]{amsart}

\usepackage{amssymb, amsmath,mathtools}
\usepackage{mathrsfs}
\usepackage{amscd}
\usepackage{verbatim}
\usepackage{stmaryrd}
\usepackage[dvipsnames]{xcolor}
\usepackage{a4wide}

\usepackage{enumerate}

\usepackage[colorlinks,linkcolor={blue},citecolor={blue},urlcolor={purple},]{hyperref}

\usepackage[colorinlistoftodos,prependcaption,textsize=tiny]{todonotes}


\mathtoolsset{showonlyrefs}

\usepackage{forest}

\usepackage{tabularx}
\newcolumntype{L}{>{\arraybackslash}X}
\usepackage{multirow}

\usepackage{tikz}
 
\tikzset{>=latex}

\theoremstyle{plain}
\newtheorem{theorem}{Theorem}[section]
\theoremstyle{remark}
\newtheorem{remark}[theorem]{Remark}

\theoremstyle{plain}

\newtheorem{lemma}[theorem]{Lemma}
\newtheorem{proposition}[theorem]{Proposition}

\numberwithin{equation}{section}


\newcommand\bN{\mathbb{N}}

\newcommand\bR{\mathbb{R}}

\newcommand\bT{\mathbb{T}}
\newcommand\bP{\mathbb{P}}
\renewcommand\P{\mathbb{P}}

\newcommand\bZ{\mathbb{Z}}

\newcommand\cF{\mathscr{F}}
\newcommand\cG{\mathscr{G}}

\newcommand\cL{\mathcal{L}}
\newcommand\calL{\mathcal{L}}

\newcommand\cO{\mathcal{O}}

\newcommand\cR{\mathscr{R}}

\newcommand\cW{\mathcal{W}}

\def\N{{\mathbb N}}
\def\Z{{\mathbb Z}}

\def\R{{\mathbb R}}
\def\C{{\mathbb C}}


\newcommand{\E}{{\mathbb E}}
\renewcommand{\P}{{\mathbb P}}
\newcommand{\F}{{\mathscr F}}

\newcommand{\g}{\gamma}

\renewcommand{\O}{\Omega}




\newcommand{\Domm}{\mathcal{O}}

\newcommand{\T}{\mathbb{T}}
\newcommand{\W}{\mathcal{W}}

\newcommand{\A}{{\mathcal A}}

\newcommand{\loc}{\mathrm{loc}}










\newcommand{\wt}{\widetilde}

\newcommand {\Schw}{\mathcal{S}}


%

%

\newcommand{\Tr}{\mathrm{Tr}}

\newcommand{\di}{\mathrm{d}}

\newcommand{\one}{{{\bf 1}}}
\newcommand{\1}{\one}
\newcommand{\embed}{\hookrightarrow}

\newcommand{\supp}{\mathrm{supp}\,}

\newcommand{\norm}[1]{{\left\vert\kern-0.25ex\left\vert\kern-0.25ex\left\vert #1
    \right\vert\kern-0.25ex\right\vert\kern-0.25ex\right\vert}}













\renewcommand{\S}{\mathcal{S}}


\def\XXint#1#2#3{{\setbox0=\hbox{$#1{#2#3}{\int}$ }
\vcenter{\hbox{$#2#3$ }}\kern-.6\wd0}}



\newcommand{\Sf}{\mathcal{S}_{f}}

\allowdisplaybreaks

\begin{document}

\author{Antonio Agresti}
\address{Department of Mathematics Guido Castelnuovo, Sapienza University of Rome,
P.le Aldo Moro 5, 00185 Rome, Italy}
\email{antonio.agresti92@gmail.com}

\author{Fabian Germ}
\address{Delft Institute of Applied Mathematics\\
Delft University of Technology \\ P.O. Box 5031\\ 2600 GA Delft\\The
Netherlands.} \email{f.germ@tudelft.nl }

\author{Mark Veraar}
\address{Delft Institute of Applied Mathematics\\
Delft University of Technology \\ P.O. Box 5031\\ 2600 GA Delft\\The
Netherlands.} \email{M.C.Veraar@tudelft.nl}

\thanks{The first author is a member of GNAMPA (INdAM). The second and third authors have received funding from the VICI subsidy VI.C.212.027 of the Netherlands Organisation for Scientific Research (NWO)}

\date\today

\title[Sharp bounds for non-trace class noise]{Sharp bounds for non-trace class noise \\ and applications to SPDE\lowercase{s}}

\keywords{Non-trace class noise, Gaussian series, $\gamma$-radonifying operators, Sobolev embeddings, random fields, stochastic heat equation, Bessel potential spaces}

\subjclass[2010]{Primary: 60B12, 60H15; Secondary: 46E35, 47B10, 60B11, 60G15}


\begin{abstract}
In the study of stochastic PDEs with colored, non-trace class space-time noise, one frequently encounters Gaussian series of the form
$$
\textstyle{g \,
\sum_{n\geq 1} \gamma_n \mu_n f_n,}
$$ 
where $(\gamma_n)_{n}$ is a sequence of independent standard Gaussian variables, $g$ is an $L^\eta(\mathcal{O})$ function, $(\mu_n)_{n}$ is a sequence of scalars, and $(f_n)_n$ is an orthonormal system in $L^2(\mathcal{O})$ where $\mathcal{O} \subseteq \mathbb{R}^d$ is an open set.  
In this manuscript, we establish necessary and sufficient conditions for the above sum to converge in Bessel potential spaces $H^{-s,q}(\mathcal{O})$. The latter can be interpreted as a Sobolev embedding for Gaussian series. 
Our main theorem is formulated using weighted sequence spaces that encode the $L^\infty$-growth of the orthonormal system $(f_n)_{n}$, a feature that is crucial for obtaining sharp estimates. 
We apply our results to the stochastic heat equation with additive non-trace class noise. In this case, our conditions capture the scaling relationship between the heat operator and the coloring of the noise. 
\end{abstract}

\maketitle

\tableofcontents

\section{Introduction}
\label{s:introduction}
In the study of stochastic PDEs (SPDEs), one often encounters Gaussian series formally written as
\begin{align}
\label{eq: intro general noise}
    \cW (t,x) = \sum_{n\geq 1} \mu_nf_n(x) w_n(t),
\end{align}
where $\mu = (\mu_n)_{n\geq 1}$ is a sequence of scalars, $\Sf = (f_n)_{n\geq 1}$ is an orthonormal system in $L^2(\cO)$ on an open set $\cO\subseteq\bR^d$ and $(w_n)_{n\geq 1}$ is a family of independent real-valued Brownian motions,  on a probability space $(\Omega,(\cF_t)_{t\geq 0},\bP)$. It is well-known that if $\mu_n\equiv 1$, then the time derivative of the series \eqref{eq: intro general noise} represents \emph{space-time white noise} on $\cO$. On the other hand, if $(\mu_n\|f_n\|_{L^\infty(\Domm)})_{n\geq 1}\in \ell^2$, then the series \eqref{eq: intro general noise} is well-defined in $L^2(\Omega;L^q(\cO))$ for all $t>0$ for all $q<\infty$ (if $\cO$ is bounded), and the corresponding derivative is often referred to as \emph{trace-class noise} on $\cO$. If $(\mu_n)_{n\geq 1}$ is in $\ell^2$, the same holds for $q=2$ for any orthonormal system $\Sf$ (without any $L^\infty$-condition). 

In the intermediate regimes, where $\mu\in \ell^\zeta$ or $\sum_{n\geq 1}|\mu_n|^\zeta \delta_n<\infty$ for $\zeta\in (2,\infty)$ and a weight $\delta_n$ (often dependent on $\Sf$), then the time derivative of $\cW$ in \eqref{eq: intro general noise} is called \textit{non-trace class noise} on $\cO$. Often in the literature, the coefficients $\mu=(\mu_n)_{n\geq 1}$ are referred to as ``coloring'' of the noise.
This distinction can also be appreciated by looking at the regularity of solutions to the stochastic
heat equation on the open set $\Domm\subseteq \R^d$:
\begin{equation}
\label{eq:SHE_intro}
\partial_t u(t,x)-\Delta u(t,x) =g (t,x)\, \partial_t\cW(t,x)
\end{equation}
supplemented with, for instance, Dirichlet boundary conditions if $\partial\Domm$ is non-empty, and zero initial conditions. 
Here, we additionally consider the random field $g$, which can be used to study nonlinear variants of the above SPDE. 
In the pure noise case $g\equiv 1$, it is well-known that trace-class noise implies that $\nabla u\in L^2_{\loc}([0,\infty)\times \Domm;\R^d)$ a.s., while, in the case of space-time white noise, one only has $u(t)\in H^{1-d/2-}(\Domm)$ a.s.\ for all $t>0$ (in particular, in $d\geq 2$ it is merely a distribution).

\smallskip

The aim of this paper is to prove \emph{sharp estimates} for the Gaussian series $\cW$ and corresponding sharp estimates for parabolic SPDEs, in the case where $\W$  is any Gaussian series whose time derivative is ``between'' trace-class and space-time white noise, with the extremes being included. 
By stochastic maximal $L^p$-regularity estimates \cite{MaximalLpregularity} (or also \cite[Section 3]{AV25_survey}), which provide optimal space-time bounds for solutions to \eqref{eq:SHE_intro}, these two results are intimately related. 
Illustrations of our results are given in Theorems \ref{thm:intro} and \ref{thm: spde intro} below, respectively. 
In turn, our central goal is to establish necessary and sufficient conditions for the convergence of the following Gaussian series 
\begin{equation}
\label{eq:BM}
g\,\sum_{n\geq 1} \g_n \mu_n f_n
\end{equation}
in Sobolev (more precisely, Bessel potential) spaces of \emph{negative} smoothness under suitable assumptions on the objects involved. In the above, $(\gamma_n)_{n\geq 1}$ is a sequence of Gaussian random variables, which appears in so-called $\gamma$-radonifying operators (see Subsection \ref{sss:gamma_spaces}) arising in sharp two-sided estimates for stochastic integrals in spaces such as $L^p(\Omega;L^q(\cO))$ as explained in the survey \cite{NVW13}. These sharp estimates play a key role in maximal regularity estimates for solutions to SPDEs \cite{AV25_survey,MaximalLpregularity}.
For the sake of concreteness, here we will mostly focus on the stochastic heat equation, although our setting allows for a much wider class of SPDEs (see Section \ref{sec: SPDE section}).
We emphasize that results for the ``endpoint cases'', i.e. trace-class and space-time white noises, will be presented as extreme cases of our results obtained. 
The importance of optimal (or sharp) estimates in terms of $g$ appearing on the right-hand side of \eqref{eq:SHE_intro} becomes apparent when dealing with nonlinear SPDEs, in which case 
$
g(t,x) = G(t,x,u(t,x))
$
for a function $G$, which is often assumed to satisfy certain (local) Lipschitz conditions in the $u$-variable. Then, it is desirable to require as little space integrability as possible on the coefficient $g(t,x,u)$, so that, for instance, in a fixed point argument, one can work with lower powers of $u$, avoiding a loss of smoothness in Sobolev embeddings. Such a situation was indeed the initial motivation for the present article, and it will be thoroughly explored in the forthcoming work \cite{AGVlocal}.

\smallskip

We now discuss the convergence of the Gaussian series \eqref{eq:BM}, with the coloring $\mu$ being between the case trace-class noise $(\mu_n \|f_n\|_{L^\infty(\cO)})_{n\geq 1}\in \ell^2$ and space-time white noise $\mu_n\equiv 1$ or, more generally $(\mu_n)_{n\geq 1}\in \ell^\infty$. 
To capture noises in this intermediate range, we introduce the following class of weighted sequences
\begin{align}
\label{eq: intro l zeta Sf condition}
    \|\mu\|_{\ell^\zeta(\Sf)}^\zeta:=\sum_{n\geq 1}|\mu_n|^\zeta 
    \|f_n\|_{L^\infty(\cO)}^2<\infty \quad \text{ for } \ \zeta\in [2,\infty),
\end{align}
and  $\|\mu\|_{\ell^\infty(\Sf)}:=\sup_{n\geq 1} |\mu_n| .$
We point out that the above space of sequences naturally arises by interpolating the above-mentioned condition in the extreme cases of trace-class ($\zeta=2$) and space-time ($\zeta=\infty$) noises. 
In many applications, the weights $(\|f_n\|_{L^\infty(\cO)})_{n\geq 1}$ are \emph{not} bounded, and therefore $\ell^\zeta(\Sf)$ is a true space of weighted sequences. 
Indeed, in the SPDE literature (see Subsection \ref{ss:comparison_intro} for references), the orthonormal system $\Sf=(f_n)_{n\geq 1}$ typically consists of $L^2$-normalized eigenfunctions of the Dirichlet Laplacian on $\cO$. If the latter is bounded and smooth, the best-known bound for such eigenfunctions is 
\begin{equation}
\label{eq:f_n_growth_in_general}
  \|f_n\|_{L^\infty(\cO)}\lesssim n^{(d-1)/2d},
\end{equation}
 with implicit constant independent of $n$ by \cite{grieser2002uniform} and Weyl's law for the eigenvalues $\lambda_n\sim n^{2/d}$, see \cite[p. 45]{hormander2007analysis}. 
The optimality can be proven, for instance, if $\cO$ is the unit ball in $\R^d$. Hence, $\ell^\zeta(\Sf)$ is typically a space of weighted sequences. 
As we will show in Section \ref{s:weight_necessary}, the unweighted viewpoint is actually not appropriate to study convergence of Gaussian series as in \eqref{eq:BM}, and thus to study parabolic SPDEs like \eqref{eq:SHE_intro}. Finally, we mention that, in case $\cO=\T^d$ and $\Sf$ is the standard Fourier basis for which $\|f_n\|_{L^\infty(\cO)}\eqsim 1$ and thus $\ell^\zeta(\Sf)$ is unweighted, then our results are \emph{also new}. The reader is referred to Subsections \ref{ss:comparison_intro} and \ref{ss:periodic_setting} for further comments.

\smallskip

As mentioned above, we study the convergence of the Gaussian series \eqref{eq:BM} in the Bessel potential-type space
$$
\wt{H}^{-s,q}(\cO)=\big\{U\in H^{-s,q}(\R^d)\,:\, \supp\, U \subseteq \overline{\cO} \big\}\quad \text{ for }\quad s>0 \  \text{ and }\  q\in (1,\infty),
$$
while we assume that $g\in L^\eta(\cO)$ for some $\eta\in (1,\infty)$ (see Subsection \ref{subs:func} for basic properties). 
In summary, the parameters $(s,q,\zeta,\eta)$ used throughout this article play the following role:
\begin{itemize}
\item $(-s,q)$ -- Spatial smoothness and integrability of the noise.
\item $\zeta$ -- Noise intensity.
\item $\eta$ -- Spatial integrability of  $g$.
\end{itemize}

Before formulating our main result on the convergence of the Gaussian series \eqref{eq:BM}, let us remark that whenever the series in \eqref{eq:BM} converges in $L^2(\Omega;\wt{H}^{-s,q}(\cO))$, then it also does with $(\gamma_n)_{n\geq 1}$ replaced by a sequence of independent Brownian motions $W_n(t)$ for any $t\geq 0$.
It is then routine to show that the limiting object is a Brownian motion with values in $\wt{H}^{-s,q}(\cO)$. As a consequence, the convergence holds in much more restrictive topologies, such as $L^p(\Omega;C^\alpha([0,T]; \wt{H}^{-s,q}(\cO)))$, for any $\alpha\in (0,1/2)$, $p\in (0,\infty)$ and $T\in (0,\infty)$. Indeed, the $L^p(\Omega)$-setting follows from the Kahane-Khintchine inequalities (see e.g.\ \cite[Section 6.2b]{HNVW2}), and the $C^\alpha$-regularity from the Kolmogorov continuity criterion. In this paper, the focus is on the convergence for fixed $t$ only, and the other types of convergence one can get afterwards by applying the mentioned results. 

The following can be seen as a Gaussian variant of the Sobolev embedding, and is a consequence of Theorem \ref{thm:MgTmu delta}, which is our main result on the convergence of Gaussian series as in \eqref{eq:BM}.

\begin{theorem}[Sobolev embedding for Gaussian series]
\label{thm:intro}
Let $\mathcal{O}\subseteq \R^d$ be an open set. Let $\Sf = (f_n)_{n\geq 1}$ be an orthonormal system in $L^2(\mathcal{O})$, and $\mu\in \ell^\zeta(\Sf)$.
Assume that $q\in (1, \infty)$, $\eta\in (1, q)$, $\zeta\in [2, \infty]$, and $s\in(0,d)$ satisfy 
\begin{align}\label{eq:conditionsmainintro}
\frac{s}{d} + \frac{1}{q}  \geq \frac{1}{\eta} + \frac{1}{2} - \frac{1}{\zeta} \qquad \text{ and } 
\qquad   \frac{1}{\eta} - \frac{1}{\zeta}<\frac12.
\end{align}
Then, for all $g\in L^{\eta}(\mathcal{O})$ 
the series \eqref{eq:BM} converges in $L^2(\Omega;\wt{H}^{-s,q}(\mathcal{O}))$ and 
\begin{align}\label{eq:main2}
\Big(\E\Big\|g\, \sum_{n\geq 1} \gamma_n  \mu_n  f_n \Big\|_{\wt{H}^{-s,q}(\mathcal{O})}^2\Big)^{1/2}
\lesssim_{d,s,q,\eta,\zeta} \|\mu\|_{\ell^{\zeta}(\Sf)} \|g\|_{L^{\eta}(\mathcal{O})}.
\end{align}
Finally, if $\Domm=\R^d$ and the first condition in \eqref{eq:conditionsmainintro} holds with equality, 
the series \eqref{eq:BM} even converges in $L^2(\Omega;\dot{H}^{-s,q}(\R^d))$ and \eqref{eq:main2} holds with $\Domm=\R^d$ and $\wt{H}^{-s,q}$ replaced by the homogeneous Sobolev space $\dot{H}^{-s,q}$.
\end{theorem}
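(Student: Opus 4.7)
The plan is to reduce via Sobolev embedding to an estimate in $L^r(\mathcal{O})$, convert the $L^2(\Omega)$-Gaussian norm to a square function via Kahane--Khintchine and Fubini, and finally use a Hölder inequality in the summation index $n$ to recover the weighted $\ell^\zeta(\Sf)$-norm.

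First I would fix $r\in(1,\infty)$ by $\tfrac{1}{r}=\tfrac{1}{q}+\tfrac{s}{d}$ and invoke the Sobolev embeddings $L^r(\mathcal{O}) \hookrightarrow H^{-s,q}(\mathcal{O})$ and, in the homogeneous $\R^d$-setting, $L^r(\R^d) \hookrightarrow \dot H^{-s,q}(\R^d)$; the condition $\tfrac{1}{\eta}-\tfrac{1}{\zeta}<\tfrac{1}{2}$ from \eqref{eq:conditionsmainintro} together with $\eta<q$ guarantees $r>1$. It therefore suffices to prove the stated bound with $H^{-s,q}$ replaced by $L^r$. Since for each fixed $x\in\mathcal{O}$ the sum $\sum_{n\geq 1}\gamma_n\mu_n f_n(x)$ is a centered Gaussian with variance $\sigma(x)^2:=\sum_{n\geq 1}\mu_n^2 f_n(x)^2$, Fubini together with the Gaussian moment identity $\E|Z|^r=c_r\sigma^r$ for $Z\sim\mathcal{N}(0,\sigma^2)$ and the Kahane--Khintchine equivalence of $L^p(\Omega)$-norms for Gaussian sums yield
$$
\Big\|g\sum_{n\geq 1}\gamma_n\mu_n f_n\Big\|_{L^2(\Omega;L^r(\mathcal{O}))}\eqsim_r \|g\sigma\|_{L^r(\mathcal{O})}.
$$
Applying Hölder with exponents $\eta$ and $m$ satisfying $\tfrac{1}{r}=\tfrac{1}{\eta}+\tfrac{1}{m}$, so that $\tfrac{1}{m}\geq\tfrac{1}{2}-\tfrac{1}{\zeta}$ by the first inequality in \eqref{eq:conditionsmainintro}, splits off $\|g\|_{L^\eta}$ and reduces the problem to proving
$$
\Big\|\Big(\sum_{n\geq 1}\mu_n^2 f_n^2\Big)^{1/2}\Big\|_{L^m(\mathcal{O})}\lesssim\|\mu\|_{\ell^\zeta(\Sf)}.
$$

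To prove this square function bound I would use a pointwise Hölder in $n$ with exponents $\zeta/2$ and $(\zeta/2)'=\tfrac{\zeta}{\zeta-2}$. Writing
$$
\mu_n^2 f_n(x)^2=\big(|\mu_n|^\zeta\|f_n\|_{L^\infty}^2\big)^{2/\zeta}\cdot\big(|f_n(x)|^2\|f_n\|_{L^\infty}^{-4/\zeta}\big),
$$
Hölder factors out $\|\mu\|_{\ell^\zeta(\Sf)}^2$ and leaves the task of bounding, in $L^{m/2}(\mathcal{O})$, an auxiliary square function depending only on $|f_n|$ and $\|f_n\|_{L^\infty}$. At the critical parameter choice $\tfrac{1}{m}=\tfrac{1}{2}-\tfrac{1}{\zeta}$ one has $m/2=(\zeta/2)'$, so after Fubini this is a sum of terms $\|f_n\|_{L^{2(\zeta/2)'}}^{2(\zeta/2)'}\|f_n\|_{L^\infty}^{-2(\zeta/2)'/(\zeta/2)}$; the $L^2$--$L^\infty$ interpolation $\|f_n\|_{L^p}^p\leq\|f_n\|_{L^\infty}^{p-2}$ (a consequence of $\|f_n\|_{L^2}=1$) then makes the $\|f_n\|_{L^\infty}$-exponents balance precisely.

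The principal difficulty will be making this final cancellation quantitatively sharp: the exponents balance to zero, but the residual then formally reads $\sum_{n\geq 1}1$, so the naive Hölder in $n$ alone is insufficient and an additional device is needed to restore summability. I anticipate that this is resolved by replacing the Sobolev reduction of the first step by the sharper $\gamma$-radonifying identification $\|T\|_{\gamma(\ell^2,H^{-s,q})}\eqsim\|(1-\Delta)^{-s/2}T\|_{\gamma(\ell^2,L^q)}$ and performing the Hölder step directly on the smoothed quantities $(1-\Delta)^{-s/2}(gf_n)$: the multiplication estimate $\|(1-\Delta)^{-s/2}(gf_n)\|_{L^Q(\mathcal{O})}\lesssim\|g\|_{L^\eta}\|f_n\|_{L^\infty}^{2/\zeta}$ at $Q=\tfrac{2\zeta}{\zeta-2}$ then supplies the correct $\|f_n\|_{L^\infty}$-factor for the Hölder cancellation to close without a divergent residual. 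The subcritical case $\tfrac{s}{d}+\tfrac{1}{q}>\tfrac{1}{\eta}+\tfrac{1}{2}-\tfrac{1}{\zeta}$ would then follow by an additional interpolation, and the endpoint $\zeta=\infty$ is treated analogously with $\ell^\infty$ in place of $\ell^\zeta(\Sf)$.
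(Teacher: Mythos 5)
Your reduction to the square-function bound $\|(\sum_{n}\mu_n^2f_n^2)^{1/2}\|_{L^m(\mathcal{O})}\lesssim\|\mu\|_{\ell^\zeta(\Sf)}$ with $\tfrac1m=\tfrac1q+\tfrac{s}{d}-\tfrac1\eta$ is not merely hard to close: in the critical case $\tfrac1m=\tfrac12-\tfrac1\zeta$ the intermediate claim is \emph{false} for $\zeta>4$. Take $\mathcal{O}=\R^d$ and $f_n=\one_{A_n}$ with $(A_n)_{n\geq1}$ disjoint unit cubes; then the left-hand side equals $\|\mu\|_{\ell^m}$ while the right-hand side is $\|\mu\|_{\ell^\zeta}$, and $m=\tfrac{2\zeta}{\zeta-2}<\zeta$ once $\zeta>4$, so no such inequality can hold. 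The failure is structural: applying the Sobolev embedding $L^r\hookrightarrow H^{-s,q}$ \emph{before} computing the Gaussian second moment discards the orthogonality of $(f_n)_{n\geq1}$, and any subsequent pointwise H\"older in $n$ with exponents $\zeta/2$ and $(\zeta/2)'$ necessarily produces the divergent factor $(\sum_{n\geq1}1)^{1-2/\zeta}$ that you yourself noticed. Your proposed repair does not escape this: the rank-one bound $\|(1-\Delta)^{-s/2}(gf_n)\|_{L^{q}}\lesssim\|g\|_{L^\eta}\|f_n\|_{L^\infty}^{2/\zeta}$ is indeed the best one can say about a single term (it is essentially the content of Proposition \ref{p:necessityLinfty_hom}), but dominating the square function by the $\ell^2$-sum of the individual $L^q$-norms (legitimate for $q\geq2$) and then doing H\"older in $n$ again leaves a sum of $n$-independent constants. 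No term-by-term argument can see the cancellation coming from orthogonality across different $n$.

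The paper closes this gap by a mechanism your argument never invokes. The estimate is recast as $\|M_gR_\mu\|_{\gamma(L^2(\mathcal{O}),H^{-s,q}(\mathcal{O}))}\lesssim\|\mu\|_{\ell^\zeta(\Sf)}\|g\|_{L^\eta(\mathcal{O})}$, and the map $(g,\mu)\mapsto M_gR_\mu$ is treated by \emph{bilinear complex interpolation} (Proposition \ref{prop: interpolation}) between the endpoints $\zeta=2$ (Lemma \ref{lem:l2 2 delta}; this is essentially your pointwise argument, and it is the only regime where it suffices) and $\zeta=\infty$ (Lemma \ref{lem:zetainfty}). The $\zeta=\infty$ endpoint carries the real content: writing $(1-\Delta)^{-s/2}M_g=A_{\mathscr{G}_s,g}$, the Parseval identity over a \emph{complete} orthonormal basis gives $\sum_{k}|A_{f,g}h_k(x)|^2=\int_{\R^d}|f(x-y)|^2|g(y)|^2\,\di y$, and a weak-type Young inequality exploiting $\mathscr{G}_s\in L^{r,\infty}(\R^d)$ (Theorem \ref{t:gammaYoung}) then yields the sharp bound. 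It is this combination of completeness/orthogonality at one endpoint with interpolation in the coloring parameter $\zeta$ that replaces your H\"older-in-$n$ step; without it the divergence you identified is unavoidable.
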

As we discuss in further detail below, in particular in Subsection \ref{subsection SPDE comparison}, the range of parameters in \eqref{eq:conditionsmainintro} greatly improves previous results in this direction, and their applications to SPDEs. A comparison to the preceding literature is given in Subsections \ref{ss:comparison_intro} and \ref{subsection SPDE comparison}.

 One could expect that the estimate \eqref{eq:main2} can be proved via paraproduct techniques. However, it seems that this would require positive smoothness of $g$, which we do not assume. An idea of the proof of Theorem  \ref{thm:intro} will be sketched in Subsection \ref{ss:proof_strategy}.
\smallskip

In Theorem \ref{thm:intro}, further endpoints can be included in special cases. 
For this, the reader is referred to Theorem \ref{thm:MgTmu delta} and results in Subsection \ref{ss:endpoint_case_main_result}. 
Before going further, let us also note that the convergence properties in \eqref{eq:BM} are the same for complex and real Gaussian sequences $(\gamma_n)_{n\geq 1}$, see for instance \cite[Proposition 6.1.21]{HNVW2}.

\smallskip

Theorem \ref{thm:intro} has the following consequence for \eqref{eq:SHE_intro}.
For simplicity, in the case $\cO$ has a non-empty boundary, we only consider homogeneous Dirichlet boundary conditions. 

\begin{theorem}[Optimal space regularity estimates for the stochastic heat equation]
\label{thm: spde intro}
Let $\mathcal{O}$ be open and bounded and smooth, or $\mathcal{O} \in \{\R^d_+, \R^d,\T^d\}$. 
Assume that the conditions of Theorem \ref{thm:intro} are satisfied, and in addition, $q\geq 2$. Then,
for $p\in (2,\infty)$, $T<\infty$, and progressively measurable $g:\Omega\times(0,T)\to L^\eta(\Domm)$, the stochastic heat equation \eqref{eq:SHE_intro} with homogeneous Dirichlet boundary conditions in space and zero initial condition, has a unique solution $u$ satisfying
\begin{equation}
\label{eq:homogeneous_estimate_intro_0}
\E\|u\|_{L^p(0,T;H^{1-s,q}(\Domm))}^p
\lesssim_{T,d,s,q,\eta,\zeta} \|\mu\|_{\ell^{\zeta}(\Sf)}^p\,\E\|g\|^p_{L^p(0,T;L^{\eta}(\mathcal{O}))}.
\end{equation}
Finally, if $\Domm=\R^d$, $T=\infty$ and $
\frac{s}{d} + \frac{1}{q}  = \frac{1}{\eta} + \frac{1}{2} - \frac{1}{\zeta} $, then it also holds that 
\begin{equation}
\label{eq:homogeneous_estimate_intro}
\E\|u\|_{L^p(\R_+;\dot{H}^{1-s,q}(\R^d))}^p
\lesssim_{d,s,q,\eta,\zeta} \|\mu\|_{\ell^{\zeta}(\Sf)}^p \,\E\|g\|^p_{L^p(\R_+;L^{\eta}(\R^d))}.
\end{equation}
\end{theorem}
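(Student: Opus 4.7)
The plan is to reduce \eqref{eq:SHE_intro} to an operator-valued stochastic evolution equation on a negative-order Bessel potential space, and then combine the pointwise-in-time bound provided by Theorem \ref{thm:intro} with the stochastic maximal $L^p$-regularity theory for the Dirichlet (or full-space/periodic) Laplacian. First, I would recast the noise in $\gamma$-radonifying language. Define the operator-valued process $B(g):\ell^{2}\to H^{-s,q}(\Domm)$ by $B(g)(a_n)_{n\geq 1} := g\sum_{n\geq 1} a_n \mu_n f_n$ and view the equation as
\begin{equation*}
\di u + A u\,\di t = B(g)\,\di W,\qquad u(0)=0,
\end{equation*}
where $W$ is the $\ell^{2}$-cylindrical Brownian motion $W=(w_n)_{n\geq 1}$ and $A=-\Delta$ is realized on $H^{-s,q}(\Domm)$ (with homogeneous Dirichlet boundary conditions in the bounded case), extrapolated from its $L^q$-realization. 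Under the restriction $s<1+1/q$ the Dirichlet boundary condition is well-posed in $H^{-s,q}$ and the $H^{1-s,q}$ scale interpolates correctly, which is exactly where this assumption enters.

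The central input is that Theorem \ref{thm:intro}, applied pointwise in $(\omega,t)$, provides the sharp $\gamma$-norm bound
\begin{equation*}
\|B(g(\omega,t))\|_{\gamma(\ell^{2},H^{-s,q}(\Domm))}
= \Big(\E'\Big\|g(\omega,t)\sum_{n\geq 1}\gamma_n \mu_n f_n\Big\|_{H^{-s,q}(\Domm)}^{2}\Big)^{1/2}
\lesssim \|\mu\|_{\ell^{\zeta}(\Sf)}\,\|g(\omega,t)\|_{L^{\eta}(\Domm)},
\end{equation*}
where $\E'$ is expectation with respect to an independent Gaussian sequence. Raising to the $p$-th power, integrating in $t\in(0,T)$, and taking expectation in $\omega$ yields
\begin{equation*}
\|B(g)\|_{L^{p}(\Omega\times(0,T);\gamma(\ell^{2},H^{-s,q}(\Domm)))}
\lesssim \|\mu\|_{\ell^{\zeta}(\Sf)}\,\|g\|_{L^{p}(\Omega\times(0,T);L^{\eta}(\Domm))}.
\end{equation*}
Progressive measurability of $g$ transfers directly to $B(g)$, so the right-hand side above belongs to the input class of stochastic maximal regularity.

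Next, I would invoke the stochastic maximal $L^{p}$-regularity of $-\Delta$ on $H^{-s,q}(\Domm)$: since $q\geq 2$ and the space has type $2$, and since $-\Delta$ has a bounded $H^{\infty}$-calculus of angle $<\pi/2$ on each of the listed domains, one has (see \cite{MaximalLpregularity} and \cite[Section 3]{AV25_survey}) the a priori estimate
\begin{equation*}
\E\|u\|_{L^{p}(0,T;H^{1-s,q}(\Domm))}^{p}
\lesssim_{T}\, \|B(g)\|_{L^{p}(\Omega\times(0,T);\gamma(\ell^{2},H^{-s,q}(\Domm)))}^{p},
\end{equation*}
because $D(A^{1/2})$ in this extrapolated setting equals $H^{1-s,q}(\Domm)$, up to the boundary condition (again using $s<1+1/q$). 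Combining with the $\gamma$-norm estimate above gives \eqref{eq:homogeneous_estimate_intro_0}. Existence and uniqueness follow from the same maximal regularity statement, which produces the solution by a fixed-point in the corresponding maximal regularity space (here the problem is actually linear in $u$, so this is immediate).

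For the homogeneous statement on $\Domm=\R^{d}$ with $T=\infty$, I would repeat the argument using the homogeneous version of stochastic maximal regularity on $\dot H^{-s,q}(\R^{d})$, which holds without a $T$-dependent constant and yields the solution in $L^{p}(\R_{+};\dot H^{1-s,q}(\R^{d}))$. The only additional ingredient is the homogeneous version of the endpoint $\gamma$-bound in Theorem \ref{thm:intro} (valid precisely when equality holds in the first condition of \eqref{eq:conditionsmainintro}), which replaces $H^{-s,q}$ by $\dot H^{-s,q}$ on the right-hand side of the $\gamma$-norm estimate.

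The main obstacle is less the abstract scheme (which is essentially a textbook application once Theorem \ref{thm:intro} is available) than verifying the applicability of stochastic maximal $L^{p}$-regularity in the \emph{negative-smoothness} scale $H^{-s,q}$ with the correct boundary realization, particularly for $\Domm$ bounded smooth and for $\R^{d}_{+}$. The delicate points are: (i) identifying the domain of the extrapolated Dirichlet Laplacian with $H^{1-s,q}(\Domm)$ (where the condition $s<1+1/q$ is sharp, to avoid the trace becoming relevant); (ii) checking that the $H^{\infty}$-calculus and thus stochastic maximal regularity persist under extrapolation; and (iii) obtaining the $T$-independent constant in the homogeneous full-space case.
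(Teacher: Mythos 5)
Your proposal is correct and follows essentially the same route as the paper: the paper also deduces the result by combining the pointwise-in-time $\gamma$-radonifying bound of Theorem \ref{thm:intro} (in the form of Theorem \ref{thm:MgTmu delta}) with stochastic maximal $L^p$-regularity for the extrapolated Dirichlet Laplacian, using the identification $H^{-s,q}_{\rm Dir}(\cO)=H^{-s,q}(\cO)$ for $s<1+\frac1q$ and, for the homogeneous full-space case, the $T$-independent homogeneous maximal regularity of \cite{ALV21}. The "delicate points" you flag are exactly the ones the paper resolves by citing \cite[Appendix A]{AV25_survey} and \cite[Theorem 6.5]{ALV21}.
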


The previous is a special case of Theorem \ref{thm: spde with delta no time}. In the latter result, we also prove optimal space-time regularity estimates. In particular, the $L^p(0,T;H^{1-s,q}(\cO))$-norm of the left-hand side of \eqref{eq:homogeneous_estimate_intro_0} is replaced by $C([0,T];B^{1-s-2/p}_{q,p}(\cO))$ while keeping the same norm on the right hand side. Here $B^{\sigma}_{q,p}$ denotes the Besov space of smoothness $\sigma$ and spatial integrability $q$ (see \cite{Tri95}). The Besov spaces are known to be optimal in maximal estimates (see \cite{ALV21}). For $1-s-\frac2p - \frac{d}{q}>0$, the previous also provides estimates in the space $C([0,T]\times \overline{\cO})$ (by Sobolev embedding), which improves the existing results, see Subsection \ref{subsection SPDE comparison} for details. Therefore, our results can also be useful for the random field approach to SPDEs. 
A similar statement holds for \eqref{eq:homogeneous_estimate_intro}, with the space $B$ replaced by its homogeneous counterpart. 

\smallskip

Next, we comment on some of the advantages of our approach to Theorem \ref{thm:intro} (and hence Theorem \ref{thm: spde intro}). For the proof strategy and a comparison to the literature, the reader is referred to Subsections  \ref{ss:comparison_intro} and \ref{ss:proof_strategy}, respectively.
Among others, as we do not rely on explicit computations, we avoid several technical difficulties which are often solved by imposing more conditions on the system $\Sf$, or its relation to the differential operator involved, see for instance \cite{cerrai2009khasminskii}.
More precisely, our approach has several significant advantages:
\begin{itemize}
  \item \emph{(Sharpness of the conditions).} We prove the necessity and sufficiency of our conditions on the parameters $(s,q,d,\eta,\zeta)$. Among others, we show the first condition in \eqref{eq:conditionsmainintro}, i.e.\ 
    \begin{align*}
    \frac{s}{d}+\frac{1}{q} \geq \frac{1}{\eta}+\frac12 -\frac{1}{\zeta}
    \end{align*}
    is a necessary and sufficient condition for the convergence of \eqref{eq:BM}, see Proposition \ref{p:necessityLinfty_hom}.
    \item  \emph{(Reaching criticality in nonlinear reaction-diffusion equations).} The sharpness and flexibility in all parameters $(s,q,\eta,\zeta)$ allow us to cover critical situations for stochastic reaction-diffusion equations with (non-)trace class noise in \cite{AGVlocal, AVreaction-local}, in which situation the critical space is an $L^r$-space where $r$ is related to $\zeta$ and the polynomial growth of the nonlinearities.
    \item \emph{(Flexibility in the $g$-integrability).} The exponent $\eta$ for which $g\in L^\eta(\cO)$ can be taken smaller than in the preceding results, and even improving the norm on the left-hand side of the corresponding estimate, see the comments in Subsection \ref{sss:comparison_new_2} for details.
    \item \emph{(General orthonormal systems $\Sf$).}  The sequence $\Sf = (f_n)_{n\geq 1}$ is merely assumed to be an orthonormal system, which does not need to have any relation to the differential operator in \eqref{eq:SHE_intro}. In particular, in Theorem \ref{thm: spde intro} (and its extensions in Subsection \ref{ss:results_heat_eq_general}), we can replace the Laplace operator by an operator in divergence form (see Remark \ref{rem:generalization_SPDE}) with \emph{no a-priori} knowledge of the eigenfunctions of the divergence form operator. 
    This fact greatly \emph{simplifies and improves} the obtained bounds in the literature, see Subsection \ref{subsection SPDE comparison} for a detailed comparison. Let us point out that, due to the generic optimality of the bound \eqref{eq:f_n_growth_in_general}, bounds for \eqref{eq:BM} in terms of $\|g\|_{L^2(\cO)}$ could so far only be obtained for $d<6$, while we obtain them for any dimension $d$.
    \item \emph{(Unbounded domains).} Since we do not need that $\Sf$ is associated to a differential operator, we can also allow unbounded domains.  For the full space $\R^d$, we also present a Fourier multiplier approach to estimates \eqref{eq:main2}, avoiding orthonormal bases altogether.
\end{itemize}

The sharpness of our results is related to the \emph{scaling invariance} of the corresponding estimates. In the case of Theorem \ref{thm:intro}, the optimality via scaling is performed in the proof of Proposition \ref{p:necessityLinfty_hom}. Consequently, the optimality of Theorem \ref{thm:intro} and of stochastic maximal $L^p$-regularity estimates \cite{MaximalLpregularity} (see also \cite[Subsection 6.2]{ALV21} for homogeneous spaces) imply the sharpness of Theorem \ref{thm: spde intro}. However, for illustrative purposes, in the following subsection, we provide a direct argument of the optimality of the first condition in \eqref{eq:conditionsmainintro} by a (formal) scaling argument on the stochastic heat equation \eqref{eq:SHE_intro} with non-trace class noise.

\subsection{Scaling for the stochastic heat equation with colored noise}
\label{ss:scaling_intro}
Consider the stochastic heat equation \eqref{eq:SHE_intro} on $\cO=\R^d$ with colored noise $\cW^{(\alpha)}$, where the parameter $\alpha\geq 0$ determines the behavior of the noise under the parabolic space-time scaling: for $\lambda>0$, $x\in\R^d$ and $t>0$,
\begin{equation}
\label{eq:scaling_alpha_intro}
\partial_t \cW^{(\alpha)}(\lambda^2 t,\lambda x)\stackrel{\text{law}}{=}\lambda^{-1-d/2+\alpha}\,\partial_t \cW^{(\alpha)}( t, x).
\end{equation} 
As is well known, space-time white noise satisfies the above with $\alpha=0$.
Let $u$ be a solution to the stochastic equation \eqref{eq:SHE_intro} on $\R^d$, for all $\lambda>0$, $x\in \R^d$ and $t>0$, set 
$
u_\lambda (t,x)=u(\lambda^2 t ,\lambda x)
$. For the latter, by the scaling invariance of the noise \eqref{eq:scaling_alpha_intro}, we formally have
\begin{align*}
\partial_t u_\lambda(t,x) -\Delta u_\lambda(t,x) 
&  = \lambda^2 \big[\partial_t u(\lambda^2 t, \lambda x) - \Delta u(\lambda^2 t, \lambda x)\big]\\
& = \lambda^2 \, g(\lambda^2 t ,\lambda x )\,\partial_t \cW^{(\alpha)}(\lambda^2 t,\lambda x)\\
&\stackrel{{\rm law}}{=}\lambda^{1-d/2+\alpha} g(\lambda^2 t ,\lambda x )\,\partial_t\cW^{(\alpha)}(t,x)
= g_{\lambda}(t,x) \,\partial_t\cW^{(\alpha)}(t,x),
\end{align*}
where 
\begin{equation*}
g_\lambda(t,x)= \lambda^{1-d/2+\alpha} g(\lambda^2 t ,\lambda x).
\end{equation*}
To check that the estimate in \eqref{eq:homogeneous_estimate_intro} of Theorem \ref{thm: spde intro} is (roughly) \emph{invariant under the rescaling} $(u,g)\mapsto (u_\lambda,g_\lambda)$, we need to connect the scaling parameter $\alpha$ to the coloring exponent $\zeta$. To this end, we consider a noise defined by means of the multidimensional Haar basis of $L^2(\R^d)$:
\begin{equation}
\label{eq:noise_invariance_scaling_intro}
\cW^{(\alpha)}(t,x)=\sum_{\sigma\in \Sigma_d} \sum_{j\in \Z}\sum_{k\in \Z^d} 
(1+|k|^2)^{-\beta/2} 2^{-j\alpha} \psi_{j,k}^{(\sigma)} (x)\, w^{j,k}(t)
\end{equation}
where $\Sigma_d =\{0,1\}^d\setminus \{(0,\dots,0)\}$ is the orientation parameter, $\beta>d/2$ and $(w^{j,k})_{j\in\Z,k\in\Z^d}$ are  independent standard Brownian motions and 
$$
\psi^{(\sigma)}_{j,k}(x)= 2^{jd/2} \Psi^{(\sigma)}(2^{j} x +k)\ \ \text{ for all }\ j\in \Z, \ k\in \Z^d.
$$ 
with $\Psi^{(\sigma)}\in L^\infty(\R^d)$ the mother wavelet with type $\sigma\in \Sigma_d$. 
If $d=1$, then $\Sigma_1=1$ and $\Psi^{(1)}=\1_{[0,1/2)}-\one_{[1/2,1)}$. Here, to perform the scaling argument, we do not need the precise form of the function $\Psi^{(\sigma)}$. Details on the construction can be found in  \cite[Chapters 5 and 10]{D92_wavelets}. 
One can check that the noise in \eqref{eq:noise_invariance_scaling_intro} formally respects the scaling relation \eqref{eq:scaling_alpha_intro} in the case $\lambda=2^{m}$ for $m\in \Z$, while the parameter $\beta$ is only needed to ensure spatial integrability, and therefore does not influence the scaling.
Let $\Sf$ be the Haar basis of $L^2(\R^d)$, and $\mu=(\mu_{j,k})_{j\in \Z,k\in \Z^d}$ with $\mu_{j,k}=(1+|k|^2)^{-\beta/2} 2^{-j\alpha}$. From \eqref{eq: intro l zeta Sf condition} and $\|\psi_{j,k}^{(\sigma)}\|_{L^\infty(\R^d)}=2^{jd/2}$, for $\zeta\in [2,\infty)$, one has
$$
 \sum_{|j|\leq N}\sum_{k\in \Z^d} |\mu_{j,k}|^\zeta\, 2^{jd}
= 
\left\{
\begin{aligned}
&\text{diverges logarithmically as $N\to \infty$ for }\zeta = d/\alpha,\\
&\text{diverges exponentially as $N\to \infty$ otherwise}.
\end{aligned}
\right.
$$
In particular, the value of $\zeta=d/\alpha$ is of specific interest, and to obtain a finite $\ell^\zeta(\Sf)$-norm of the coloring $\mu$ (see \eqref{eq: intro l zeta Sf condition}), it suffices to modify the one in \eqref{eq:noise_invariance_scaling_intro} at a logarithmic scale compared to the amplitude $2^{-j\alpha}$.
We emphasize that it is natural to have such logarithmic corrections. Indeed, this is already the case of the L\'evy modulus of continuity for Brownian motion, and to avoid them  requires working, even in the space variable, with Besov spaces like $B^{1/2}_{q,\infty}$ and their variants rather than Sobolev or Bessel potential spaces, see e.g.\ \cite{ArOh,V11_regularity_Besov}. 
However, it is worth stressing that scaling \emph{cannot} highlight such logarithmic corrections, as for instance $B^{1/2}_{q,\infty}$ has the same scaling as $H^{1/2,q}$, but Brownian paths live in the former but not in the latter space. 
Finally, we point out that, for similar reasons but in a quite different context, logarithmic corrections often appear also in the context of critical singular SPDEs, see \cite[Subsection 1.3]{cannizzaro2024lecture} and \cite[Sections 4-6]{caravenna2025disordered}.

For the sake of the scaling argument, we therefore ignore the required logarithmic correction. 
Now, if $\zeta=d/\alpha$, then the estimate \eqref{eq:homogeneous_estimate_intro} in Theorem \ref{thm: spde intro} is \emph{invariant} under the natural scaling $(u,g)\mapsto (u_\lambda,g_\lambda)$ of the SPDE \eqref{eq:SHE_intro} on $\cO=\R^d$ if and only if the parameters $(s,q,\eta,\zeta)$ satisfy 
\begin{equation}
\label{eq:condition_equality_scaling}
\frac{s}{d}+\frac{1}{q} = \frac{1}{\eta}+\frac12 -\frac{1}{\zeta},
\end{equation}
that is the first condition in \eqref{eq:conditionsmainintro} with equality. 
Indeed, for all $q\in [2,\infty)$ and $p\in (2,\infty)$,
\begin{align*}
\lambda^{1-s-d/q-2/p}
(\E\|u\|_{L^p(\R_+;\dot{H}^{1-s,q}(\R^d))}^p)^{1/p}
& \eqsim 
(\E\|u_{\lambda}\|_{L^p(\R_+;\dot{H}^{1-s,q}(\R^d))}^p)^{1/p}\\
&\lesssim
 (\E\|g_\lambda\|_{L^p(\R_+;L^\eta(\R^d))}^p)^{1/p}\\
& =\lambda^{1-d/2+d/\zeta-d/\eta-2/p} (\E\|g\|_{L^p(\R_+;L^\eta(\R^d))}^p)^{1/p}.
\end{align*}
The exponents in the above match if and only if \eqref{eq:condition_equality_scaling} holds. Hence, the estimates of Theorem \ref{thm: spde intro} are sharp. It is routine to check that the above arguments can also be applied to space-time estimates of Section \ref{sec: SPDE section}, therefore also proving their optimality.

\subsection{Comparison with the literature}
\label{ss:comparison_intro}
 
 Early works deriving estimates for \eqref{eq:BM} for $q=2$
 are \cite{C03, CR04} and subsequent works such as \cite{CR05}. 
 There,
the condition \eqref{eq: intro l zeta Sf condition} is replaced by the condition
\begin{align}
\label{eq: intro sup condition}
    \sup_{n\geq 1}\|f_n\|_{L^\infty(\cO)}<\infty,
\end{align}
together with the requirement that $\mu\in\ell^\zeta$. 
It is additionally assumed that the orthonormal system $\Sf$ consists of the eigenfunctions of the differential operator involved. Without this last assumption, under the condition \eqref{eq: intro sup condition}, we 
are able to prove essentially the same estimate for the stochastic convolution under less restrictive conditions on the parameters $(\zeta,d)$
(see Subsection \ref{subsection SPDE comparison} for details). 
    To the best of our knowledge, a condition of the form \eqref{eq: intro l zeta Sf condition} first appeared in \cite{cerrai2009khasminskii}, where again the system $\Sf$ is assumed to also diagonalize the differential operator involved with eigenvalues $(\alpha_n)_{n\geq 1}$ and an additional constraint of the form $\Lambda:=\sum_{n\geq 1} \alpha_n^{-\beta}\|f_n\|_{L^\infty}^2<\infty$ is imposed, where $\beta(\zeta-2)<\zeta$. However, this last condition turns out to be quite restrictive in many realistic situations. While we give a detailed comparison in Subsection \ref{subsection SPDE comparison}, we only mention here that, for instance, in dimension $d=2$, the  condition $\Lambda<\infty$ together with \eqref{eq:f_n_growth_in_general} leads to the constraint  $\zeta<6$; a restriction we simply do not get here. 
    Instead, since we do not assume this extra condition $\Lambda<\infty$, we obtain the desired estimates for any $d<\infty$ (see, again, Subsection \ref{subsection SPDE comparison} for a more detailed comparison).
    The same condition as in \cite{cerrai2009khasminskii} also appears in  \cite{cerrai2011averaging, cerrai2017averaging} and follow-up articles.
    In the very recent article \cite{cerrai2025nonlinear} estimates for a stochastic convolution were derived only under the assumption that $\mu\in\ell^\zeta(\Sf)$, for $\zeta<2d/(d-2)$ in $d\geq 2$ and $\mu\in\ell^\infty$ for $d=1$ and $g\in L^\infty$, without assuming an additional summability condition in the eigenvalues $(\alpha_n)_{n\geq 1}$ of the differential operator involved. In Subsection \ref{subsection SPDE comparison} we show that our Theorem \ref{thm:generalONB} allows for the same estimate under the same condition $\zeta<2d/(d-2)$, without assuming that the $\Sf$ are the eigenfunctions of the Laplacian.
    Apart from the works cited above and their follow-up or preceding articles, 
    we are not aware of other works using the condition $\mu\in\ell^\zeta(\Sf)$ for $\zeta\in [2,\infty)$ to obtain associated estimates of the noise term. 
    Indeed, instead of \eqref{eq: intro l zeta Sf condition} often a trace condition is assumed on either the covariance operator itself, or a power of the leading order differential operator $A$ or its semigroup $S$ applied to a  power of $R$, i.e. $\Tr (S(t)RR^*S^*(t))<\infty$, see for instance \cite{hairer2009introduction}.
    Similarly the authors in \cite{blomker2020stochastic} formulate a condition resembling \eqref{eq: intro l zeta Sf condition} for $q=2$, but with $\zeta=2$ and $\|f_n\|_{L^\infty}$ replaced by $\|f_n\|_{L^2}$, where the $(f_n)_{n\geq 1}$ are additionally assumed to form a complete orthonormal basis.

To the best of our knowledge, we are the first to provide a sharp characterisation of the convergence of \eqref{eq:BM} in Bessel potential spaces.

\subsection{Proofs strategy and structure of the paper}
\label{ss:proof_strategy}
We begin by giving an informal overview of the proof of Theorem \ref{thm:MgTmu delta} (which, in particular, contains Theorem \ref{thm:intro}). Using the language of $\g$-radonifying operators (see Subsection \ref{sss:gamma_spaces}), \eqref{eq:main2} can be equivalently formulated as 
\begin{equation}
\label{eq:mapping_Mg_estimates}
\|M_g R_\mu\|_{\g(L^2(\cO),H^{-s,q}(\cO))}\lesssim \|\mu\|_{\ell^\zeta(\Sf)}\|g\|_{L^\eta(\cO)},
\end{equation}
where $M_g$ is the multiplication operator by $g$, and the operator $R_\mu$ is defined as 
$
L^2(\cO) \ni h \mapsto \sum_{n\geq 1} \mu_n f_n (h,e_n)_{L^2(\cO)},
$
for a given $(e_n)_{n\geq 1 }$ complete orthonormal system of $L^2(\cO)$. 
From the equivalence of \eqref{eq:main2} and \eqref{eq:mapping_Mg_estimates}, the choice of such an orthonormal basis $(e_n)_{n\geq 1}$ is irrelevant. 

The main idea behind our approach to \eqref{eq:mapping_Mg_estimates} is to look at $M_g R_\mu$ as a \emph{bilinear} operator 
\begin{equation}
\label{eq:abstract_view_point_intro}
(g,\mu)\mapsto M_gR_\mu
\end{equation} 
for $g$ and $\mu$ taken from appropriate spaces. As shown in Section \ref{s:main_result}, the power of this viewpoint is that it allows us to use \emph{multilinear interpolation} (see Proposition \ref{prop: interpolation} or \cite[Section 4.4]{BeLo}) and reduce the full proof of \eqref{eq:mapping_Mg_estimates} with $\mu\in \ell^\zeta(\Sf)$ and $g\in L^\eta(\cO)$ to the following limiting cases: 
\begin{enumerate}[{\rm(1)}]
\item\label{it:into_interpolation_1} $\zeta=2$, $s\in (0,d)$,\  $q\in (1,\infty)$,\ $\eta\in (1,q)$\ and \ $\frac{s}{d} + \frac{1}{q}  \geq \frac{1}{\eta}$.
\item\label{it:into_interpolation_2} $\zeta=\infty$,   $s\in (\frac{d}{2},d)$,\ $q\in (2,\infty)$,\ $\eta\in (2,q)$\ and \
$\frac{s}{d} + \frac{1}{q}  \geq \frac{1}{\eta} + \frac{1}{2}$.
\end{enumerate}
One can readily check that the conditions \eqref{it:into_interpolation_1} and \eqref{it:into_interpolation_2} are indeed included in \eqref{eq:conditionsmainintro}.
The proofs of \eqref{eq:mapping_Mg_estimates} in the above cases are given in Subsection \ref{ss:endpoint_case_main_result}.

\smallskip

The case \eqref{it:into_interpolation_1} is the easiest, and is proven in Lemma \ref{lem:l2 2 delta}. It follows from Sobolev embeddings and the identification of $\gamma$-radonifying operators $\g(L^2(\cO),L^\eta(\cO))=L^\eta(\cO;L^2(\cO))$, see \eqref{eq: square fct char gamma}.

The case \eqref{it:into_interpolation_2} is much more involved. We discuss only the case $\cO=\R^d$, as the general one follows by extending by zero. In this case, the core idea is as follows. 
As $R_\mu\in \calL(L^2(\R^d))$ since $\mu\in \ell^\infty(\Sf)=\ell^\infty$ by assumption, and in light of the ideal property of $\g$-radonifying operators (see \eqref{eq:gamma_ideal}), it suffices to show 
\begin{equation}
\label{eq:M_g_gamma_radonifying}
M_g \in \g(L^2(\R^d),H^{-s,q}(\R^d)).
\end{equation}
Clearly, the above is equivalent to proving $ (1-\Delta)^{-s/2} M_g \in \g(L^2(\R^d),L^q(\R^d))$. Thus, to obtain \eqref{eq:M_g_gamma_radonifying} under the condition, it is enough to show that the following operator is $\g$-radonifying: 
\begin{align}
\label{eq:intro_gamma_operator}
L^2(\R^d)\ni
f\mapsto \big[(1-\Delta)^{-s/2} M_g f\big](x)= \int_{\R^d} \mathscr{G}_s(x-y) g(y)f (y)\,\di y\in L^\eta(\R^d)
\end{align}
where $\mathscr{G}_s=\mathscr{F}^{-1} ((1+4\pi^2|\cdot|^2)^{-s/2})$ is the Bessel potential kernel. 
Interestingly, the above operator, of convolution type, has already appeared in the literature in the context of Schr\"odinger operators \cite{Cwikel,LevSuZan,Simon76}, see the beginning of Section \ref{s:gamma_consequences} for additional comments.
The question whether the operator of convolution type in \eqref{eq:intro_gamma_operator} is $\g$-radonifying will be explored in Section \ref{s:gamma_consequences}. In particular, we prove an estimate that can be regarded as a $\g$-radonifying version of the Young inequality for convolutions involving \emph{weak Lebesgue spaces}, see Theorem \ref{t:gammaYoung}. We emphasize that working with weak-type spaces is essential to accommodate the singularity of the Bessel potential kernel, and the proofs of the operator in \eqref{eq:intro_gamma_operator} being $\g$-radonifying essentially rely on the use of weak Lebesgue spaces, see Remark \ref{r:necessity_of_weak_spaces}.

\smallskip

Behind the bilinear interpolation argument \eqref{eq:abstract_view_point_intro}, there is actually a much more general framework in which one can prove the estimate \eqref{eq:mapping_Mg_estimates}, for which the weighted sequence case $\mu\in \ell^\zeta(\Sf)$ is only a very special case. This will be illustrated in Subsection \ref{ss:Bird}, where, as a prototype example, we will consider the assignment $\mu \mapsto R_\mu$ replaced by $m\mapsto T_m$ where $T_m$ is the Fourier multiplier with symbol $m$ and $\cO=\R^d$, see \eqref{eq:Fourier_multiplier}. This allows us to cover Gaussian noises that are not only in the form \eqref{eq: intro general noise} but also defined via convolution on the whole space.
Explicit examples are given by the \emph{Mat\'ern random fields} in Theorem \ref{thm:regulartity_matern_with_multiplicative_g}.

In Section \ref{s:weight_necessary}, we again apply the general paradigm outlined in Subsection \ref{ss:Bird} using the interpolation argument \eqref{eq:abstract_view_point_intro}, but with $\mu$ from an unweighted sequence space. The main reason for that is to show the necessity of the weights  $\|f_n\|_{L^\infty(\cO)}^2$ in the condition \eqref{eq: intro l zeta Sf condition} to capture how the noise coloring $\zeta$ enters in the condition ruling $(s,q,\eta)$ (i.e.\ the first in \eqref{eq:conditionsmainintro}), and therefore, also necessary for sharpness and/or scaling invariance in applications to SPDEs, see Subsection \ref{ss:scaling_intro}.

\smallskip

In Section \ref{sec: SPDE section}, we investigate the consequences of Theorem \ref{thm:MgTmu delta} to SPDEs of elliptic and parabolic type. 
Here we focus on elliptic equations of Mat\'ern type, and the stochastic heat equation \eqref{eq:SHE_intro}. For the latter, we formally rewrite the noisy part of the equation $g\, \partial_t\cW$ as 
\begin{equation}
\label{eq:identity_cW_to_R_mu_W}
g \, \partial_t \cW = M_g R_\mu \dot{W}
\end{equation}
where $\cW$ is as in \eqref{eq: intro general noise} and $W$ its associated $L^2(\cO)$-cylindrical Brownian motion.
With the above identification and Theorem \ref{thm:MgTmu delta} at our disposal, Theorem \ref{thm: spde with delta no time} (which extends Theorem \ref{thm: spde intro}) is a consequence of stochastic maximal $L^p$-regularity estimates proven in \cite{MaximalLpregularity} (and \cite[Subsection 6.2]{ALV21} for homogeneous spaces).

\subsection*{Notation}
We collect here some basic notation. Additional conventions will be introduced as needed later in the text. In particular, the function spaces used here are introduced in Subsection \ref{subs:func}, the $\g$-radonifying operators $\g(H,X)$ in Subsection \ref{sss:gamma_spaces}.

As usual, we write $A \lesssim_{p_1,\dots,p_n} B$ or $A \gtrsim_{p_1,\dots,p_n} B$ in case there exists a constant $C$ depending only on the parameters $p_1,\dots,p_n$ such that $A\leq C B$ or $A\geq C B$. We write $A \eqsim_{p_1,\dots,p_n} B$ if $A \lesssim_{p_1,\dots,p_n} B$ and $B \lesssim_{p_1,\dots,p_n} A$ both hold.
The open half-line is denoted by $\R_+=(0,\infty)$. 
For $d\geq 1$, $\cO$ denotes either an open set of $\R^d$ or the $d$-dimensional torus $\T^d=\R^d/\Z^d$. 
The Lebesgue measure of a measurable set $S\subseteq \cO$ is denoted by $|S|$.
For two Banach spaces $X$ and $Y$, 
the set of bounded linear operators from $X$ to $Y$ is denoted by $\calL(X,Y)$. As usual, we set $\calL(X)=\calL(X,X)$. If $X$ and $Y$ are Hilbert spaces, then $\cL_2(X,Y)$ denotes the set of Hilbert-Schmidt operators from $X$ to $Y$, and $\cL_2(X)=\cL_2(X,X)$.

\emph{Probabilistic set-up.}  $(\Omega,\A,\P)$ is a given probability space. As usual, $\E[\cdot ]=\int_{\Omega} \cdot  \, \di \P$ denotes the corresponding expectation value.

\emph{Interpolation theory.} For $\theta\in (0,1)$ and $p\in (1,\infty)$, we denote by $(X_0,X_1)_{\theta,p}$ and $[X_0,X_1]_\theta$ the real and complex interpolation of the compatible spaces $X_0$ and $X_1$ (i.e.\ for $j\in \{0,1\}$, it holds that $X_j\embed V$ for some topological Hausdorff vector space $V$), respectively. For details on interpolation theory, the reader is referred to \cite{BeLo,InterpolationLunardi} or \cite[Appendix C]{HNVW1}.

\section{Preliminaries}
\label{sec: preliminaries}

\subsection{Function spaces}\label{subs:func}

For details on function spaces, the reader is referred to \cite{HNVW1} and \cite{Tri83,Tri95}. For $p\in [1,\infty]$ and a Banach space $X$ and an interval $I\subset\bR_+$ we define $L^p(I;X)$ as the space of strongly measurable functions $f:I\to X$, such that 
\begin{align}
    \|f\|_{L^p(I;X)}:=\Big( \int_I \|f(t)\|_{X}^p \,\di t\Big)^{1/p}<\infty.
\end{align}
For $p\in [1,\infty)$ and a measure space $(S,\Sigma,\nu)$ we define
the space $L^{p,\infty}(S,X)$, often called the weak $L^p$ space, as the space of strongly measurable functions $f:S\to X$ for which
\begin{align}
    \|f\|_{L^{p,\infty}(S;X)}:=\sup_{r>0}r\,\big[\nu(\|f\|_{X}>r)\big]^{1/p}<\infty.
\end{align}
If $X = \bR$ we simply write $L^{p,\infty}(S)$.

For $\sigma\in \R$ and $q\in (1, \infty)$ define the Bessel potential spaces $H^{\sigma,q}(\R^d)$ as all $f\in \Schw'(\R^d)$ such that $\|f\|_{H^{\sigma,q}(\R^d)} = \|(1-\Delta)^{\sigma/2} f\|_{L^q(\R^d)}<\infty$. 
Similarly one can define $\dot{H}^{s,q}(\R^d)$ and for details see \cite[Chapter 5]{Tri83}.
Similarly (see \cite[Subsection 1.3.3]{Grafakos2}), one defines the homogeneous
Bessel potential spaces $\dot{H}^{\sigma,q}(\R^d)$ as the set of all $f\in \S'(\R^d)/\mathcal{P}$ such that $\|(-\Delta)^{\sigma/2}f\|_{L^{q}(\R^d)}<\infty$, where $\mathcal{P}$ is the set of all real polynomials on $\R^d$. It is worth noting that homogeneous Sobolev functions can often be identified as standard distributions by suitably removing polynomials, exploiting their smoothness, see e.g.\ \cite[Theorem 2.31]{S18_Besov}. Interestingly, the latter result shows that the homogeneous spaces of our interest $\dot{H}^{-s,q}(\R^d)$ can be realized as a subset of the Schwartz distributions $ \S'(\R^d)$, for all $s\geq 0$ and $q\in (1,\infty)$.

For an open set $\mathcal{O}\subseteq \R^d$ and 
\(s\in\mathbb R\) and \(q\in(1,\infty)\), we define
\[
H^{s,q}(\mathcal O)
=
\{U|_{\mathcal O}: U\in H^{s,q}(\mathbb R^d)\},
\]
with the natural quotient norm
$
\|u\|_{H^{s,q}(\mathcal O)}
=
\inf\{\|U\|_{H^{s,q}(\mathbb R^d)}: U|_{\mathcal O}=u\}.
$
By duality, one can check that $H^{-s,q}(\cO)=(H^{s,q'}_0(\cO))^*$ provided $s-1/q'\not\in \N$ where $q'$ is the conjugate exponent of $q$ and $H^{s,q'}_0(\cO)$ is the closure of $C^\infty_{{\rm c}}(\cO)$ in $H^{s,q'}(\cO)$.
Furthermore, we set
\[
\widetilde H^{s,q}(\mathcal O)
=
\big\{U\in H^{s,q}(\mathbb R^d): \operatorname{supp} U\subseteq \overline{\mathcal O}\big\},
\]
endowed with the norm inherited from \(H^{s,q}(\mathbb R^d)\).
With \(q'=q/(q-1)\), we often employ the canonical duality identifications
\begin{equation}\label{eq:dualHtilde}
\big(H^{s,q}(\mathcal O)\big)^*=\widetilde H^{-s,q'}(\mathcal O),
\quad \text{ and }\quad 
\big(\widetilde H^{s,q}(\mathcal O)\big)^*
=
H^{-s,q'}(\mathcal O),
\end{equation}
with respect to the distributional duality inherited from \(\mathbb R^d\). 
These identifications hold for all \(s\in\mathbb R\) see \cite[Theorem 4.8.1]{Tri95}, and can be proven via the duality of Bessel potential spaces on $\R^d$-case and standard duality results, see e.g.\ \cite[Proposition B.1.4]{HNVW1}. Let us mention that $\wt{H}^{-s,q}(\cO)=H^{-s,q}(\cO)$ if $s<1/q'=1-1/q$, and in case $s-1/q'\not\in \N$, by the above mentioned duality of $H^{-s,q}(\cO)$ one has:
\begin{equation}
\label{eq:restriction_properties_of_tilde_spaces}
\big\|u|_{H^{s,q}_0(\cO)}\big\|_{H^{-s,q}(\cO)}
\lesssim \|u\|_{\wt{H}^{-s,q}(\cO)} \ \ \text{ for all } \ u\in \wt{H}^{-s,q}(\cO).
\end{equation}
Note that the above does not give an embedding of $\wt{H}^{-s,q}(\cO)$ into $H^{s,q}(\cO)$, as the former space might contain distributions supported on $\partial\cO$.

We will frequently use that, by Sobolev embeddings, if $\sigma_1-\frac{d}{q_1}\geq \sigma_0-\frac{d}{q_0}$ and $\sigma_1>\sigma_0$ and $q_0>q_1$, then $H^{s_1, q_1}(\mathcal{O})\hookrightarrow H^{s_0, q_0}(\mathcal{O})$, 
and analogous results hold for the space $\wt{H}^{s,q}(\cO)$.

\subsection{$\gamma$-radonifying operators}
\label{sss:gamma_spaces}
Here, we give definitions and basic results on $\gamma$-radonifying operators.
The reader is referred to \cite[Chapter 9]{HNVW2} for further results.

\smallskip

Let $H$ be a separable Hilbert space and $X$ a Banach space.
Let $(\g_n)_{n\geq 1}$ be a sequence of standard independent Gaussian variables over a given probability space $(\O,\mathcal{A},\P)$, and let $(h_n)_{n\geq 1}$ be an orthonormal basis of $H$. 
We say that a linear bounded operator $T:H\to X$ is \emph{$\g$-radonifying} if $\sum_{n\geq 1} \g_n T h_n$ converges in $L^2(\O;X)$ and we set
\begin{equation}
\label{eq:def_gamma_norm}
\|T\|_{\g(H,X)}=
\Big(\E \Big\| \sum_{n\geq 1} \g_n T h_n\Big\|_{X}^2\Big)^{1/2}.
\end{equation}
Note that $\g(H,X)\embed \calL(H,X)$. Moreover, from the comments below \cite[Definition 9.1.1]{HNVW2}, it also follows that the exponent $2$ can be replaced by any exponent $p\in [1,\infty)$.
Moreover, \cite[Theorem 9.1.17]{HNVW2} ensures that the above definition is independent of the orthonormal basis $(h_n)_{n\geq 1}$ chosen on $H$. 
In particular, any finite-rank operator is $\gamma$-radonifying.

An important property of the $\gamma$-spaces is the so-called \emph{ideal property}, see \cite[Theorem 9.1.10]{HNVW2}. The latter ensures that, if $R\in \calL(H,K)$, $T\in \g(K,X)$ and $S\in \calL(X,Y)$, the space of bounded operators from $X$ to $Y$, where $Y$ is another Banach space, then 
$STR\in \g(H,Y)$ and 
\begin{equation}
\label{eq:gamma_ideal}
\|STR\|_{\g(H,Y)}\leq\|S\|_{\calL(X,Y)} \|T\|_{\g(K,X)} \|R\|_{\calL(H,K)}.
\end{equation}

An alternative description of $\g(H,X)$ is available when $X$ is an $L^p(S)$ space over a $\sigma$-finite measure space $(S,\Sigma,\nu)$. Indeed, in the latter situation, it holds that $\g(H,L^p(S))=L^p(S,H)$ and 
\begin{align}
\label{eq: square fct char gamma}
    \|T\|_{\gamma(H,L^p(S))} \eqsim_p \Big\| \Big(  \sum_{n\geq 1} |Th_n|^2 \Big)^{1/2} \Big\|_{L^p(S)},
\end{align}
due to the $\gamma$-Fubini theorem \cite[Theorem 9.4.8]{HNVW2}. Moreover, $\gamma(H,H)=\calL_2(H)$ coincides with the Hilbert-Schmidt operators (see \cite[Proposition 9.1.9]{HNVW2}).

\section{The $\gamma$-Young inequality and consequences}
\label{s:gamma_consequences}
In this section, we study the mapping properties of the  convolution-type operator
\begin{align}\label{eq:Afg}
A_{f,g} h(x) := \int_{\R^d} f(x-y) g(y) h(y) \, \di y,\quad h\in L^0(\bR^d),
\end{align}
whenever it exists as a Lebesgue integral for almost all $x\in \R^d$ for given measurable functions $f,g:\bR^d\to \bR$.

The above operator $A_{f,g}$ appeared already in the study of eigenvalues of Schr\"odinger operators (e.g.\ $-\Delta+V$ with potential $V\in L^{d/2}(\R^d)$), and related operators in terms of Fourier multipliers were studied in \cite{Cwikel,Simon76, Simon05} in the context of Schatten class spaces. The reader is also referred to  \cite{LevSuZan} for further results and a historical account.
In this section, we give sufficient conditions under which $A_{f,g}$ defines an element of $\gamma(L^2(\R^d), L^q(\R^d))$ for $q\in [2, \infty)$. 

\begin{theorem}[$\gamma$-Young inequality]\label{t:gammaYoung}
Let $\eta,r,q\in (2, \infty)$ be such that $\frac{1}{q} + \frac{1}{2} = \frac{1}{r} + \frac{1}{\eta}$. Let $f\in L^{r,\infty}(\R^d)$ and $g\in L^{\eta}(\R^d)$. Then 
$
 A_{f,g}\in \gamma(L^2(\R^d),L^q(\R^d))
$
and 
\begin{align}
\label{eq:gamma_young}
\|A_{f,g}\|_{\gamma(L^2(\R^d),L^q(\R^d))} \lesssim_{\eta,q,r} \|f\|_{L^{r,\infty}(\R^d)} \|g\|_{L^{\eta}(\R^d)}. 
\end{align}
\end{theorem}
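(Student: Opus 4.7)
The plan is to recognize $A_{f,g}$ as an integral operator with kernel $K(x,y) = f(x-y) g(y)$, use the square function characterization \eqref{eq: square fct char gamma} of $\gamma$-radonifying operators into $L^q$ to reduce the $\gamma$-norm to an $L^{q/2}$-norm of a convolution, and then conclude via the weak-type Young inequality.

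First, I would fix an orthonormal basis $(h_n)_{n\geq 1}$ of $L^2(\R^d)$ and apply Parseval's identity pointwise in $x$: for every $x$ with $K(x,\cdot) \in L^2(\R^d)$,
\[
\sum_{n\geq 1} |A_{f,g} h_n(x)|^2 = \int_{\R^d} |K(x,y)|^2 \,\di y = \int_{\R^d} |f(x-y)|^2 |g(y)|^2 \,\di y = \bigl(|f|^2 * |g|^2\bigr)(x).
\]
Combined with \eqref{eq: square fct char gamma}, this yields
\[
\|A_{f,g}\|_{\gamma(L^2(\R^d),\, L^q(\R^d))}^2 \eqsim_q \bigl\||f|^2 * |g|^2\bigr\|_{L^{q/2}(\R^d)},
\]
once the right-hand side is shown to be finite.

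Next, I would invoke the weak-type Young inequality for convolutions with exponents $p_1 = r/2$, $p_2 = \eta/2$ and $p_3 = q/2$. Since $r,\eta,q \in (2,\infty)$, all three exponents lie in $(1,\infty)$, and the hypothesis $\frac{1}{q}+\frac{1}{2} = \frac{1}{r}+\frac{1}{\eta}$ is exactly the Young balance $1 + \frac{1}{p_3} = \frac{1}{p_1}+\frac{1}{p_2}$. Using $\bigl\||f|^2\bigr\|_{L^{r/2,\infty}} = \|f\|_{L^{r,\infty}}^2$ and $\bigl\||g|^2\bigr\|_{L^{\eta/2}} = \|g\|_{L^\eta}^2$, this gives
\[
\bigl\||f|^2 * |g|^2\bigr\|_{L^{q/2}(\R^d)} \lesssim_{r,\eta,q} \|f\|_{L^{r,\infty}(\R^d)}^2 \,\|g\|_{L^\eta(\R^d)}^2,
\]
and combining with the previous display and taking square roots yields \eqref{eq:gamma_young}.

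The main delicate point is justifying that $K(x,\cdot) \in L^2(\R^d)$ for almost every $x$, since this is needed both for the pointwise Parseval step and for the well-definedness of $A_{f,g}h$ as a Lebesgue integral. This will follow a posteriori from the weak Young bound, which ensures $|f|^2 * |g|^2 \in L^{q/2}$ and is hence a.e.\ finite; to avoid circularity I would either first approximate $f$ and $g$ by bounded, compactly supported functions and pass to the limit via a monotone convergence argument for the Gaussian series defining $A_{f,g}$, or directly invoke a kernel form of \eqref{eq: square fct char gamma} obtained from the $\gamma$-Fubini theorem \cite[Theorem~9.4.8]{HNVW2}, which packages the identification without relying on pointwise Parseval. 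The use of the weak-type Young inequality (rather than strong-type) is essential to accommodate the fact that $f$ only lies in $L^{r,\infty}$, which is the natural scale since the Bessel kernel $\mathscr{G}_s$ belongs to weak $L^{d/(d-s)}$ but not to its strong counterpart.
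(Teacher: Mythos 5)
Your proof is correct and follows essentially the same route as the paper's: the square-function characterization \eqref{eq: square fct char gamma}, pointwise Parseval to reduce to $\||f|^2*|g|^2\|_{L^{q/2}}$, and the weak-type Young inequality with exponents $r/2,\eta/2,q/2$. Your extra care about the a.e.\ finiteness of $\int |f(x-y)|^2|g(y)|^2\,\di y$ is a welcome refinement of a point the paper handles only implicitly, but it does not change the argument.
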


Note that the range of parameters $(q,r,\eta)$ in the above implies $q>\eta$ and $q>r$.
From the proof of the above, it follows that for $f,g\in L^0(\R^d)$ and $q\in [2,\infty)$ (see \eqref{eq:identityAfg} below),
\begin{equation}
\label{eq:gamma_estimate_for_A_is_symmetric}
\|A_{f,g}\|_{\g(L^2(\R^d),L^q(\R^d))}\eqsim_q \|A_{g,f}\|_{\g(L^2(\R^d),L^q(\R^d))}.
\end{equation}
In particular, one can reverse the roles of the weak-Lebesgue spaces in \eqref{eq:gamma_young}. More precisely, under the assumptions of Theorem \ref{t:gammaYoung}, it also holds that 
$$
\|A_{f,g}\|_{\gamma(L^2(\R^d),L^q(\R^d))} 
\lesssim_{\eta,q,r} \|f\|_{L^{r}(\R^d)} \|g\|_{L^{\eta,\infty}(\R^d)}.
$$
In Subsection \ref{ss:gamma_young_proof}, we also investigate the endpoint cases of the $\gamma$-Young inequality, namely the cases $\eta=2$ and $\eta=q$, see Propositions \ref{prop:eta2} and \ref{prop:etaq}, respectively. In both cases, we show that the $\gamma$-Young inequality \eqref{eq:gamma_young} can hold only if the weak Lebesgue space $L^{r,\infty}(\R^d)$ is replaced by the usual Lebesgue space $L^r(\R^d)$.

\smallskip

From Theorem \ref{t:gammaYoung} and its proof, we derive the following result, which is of central importance in the study of Sobolev embeddings of Gaussian random sums as Theorems \ref{thm:intro} and \ref{thm:MgTmu delta}.

\begin{proposition}[$\gamma$-bounds for multiplication operators in Sobolev spaces]
\label{prop:M_g_Sobolev}
Let $M_g$ be the multiplication operator with a measurable function $g:\R^d \to \R$. 
Then the following hold:
\begin{enumerate}[{\rm(1)}]
\item\label{it:M_g_Sobolev1} {\rm (Sufficient conditions)}
If $(s,q,\eta)$ satisfy one of the following conditions:
\begin{enumerate}[{\rm(a)}]
\item\label{it:M_g_Sobolev1a}
$s\in(\frac{d}{2},d)$, $ q\in (2, \infty)$, $\eta\in (2, q)$ and $\frac{s}{d} + \frac{1}{q}  \geq \frac{1}{\eta} + \frac{1}{2}$;
\item\label{it:M_g_Sobolev1b} 
$s>\frac{d}{2}$ and $\eta=q\geq 2$;

\item\label{it:M_g_Sobolev1c}
$s>\frac{d}{2}$, $\eta=2$, $q\in [2,\infty)$ and $\frac{s}{d} + \frac{1}{q} > 1$,
\end{enumerate}
then for all $g\in L^\eta(\R^d)$ one has $M_g\in \g(L^2(\R^d),H^{-s,q}(\R^d))$ and 
\[
\|M_g\|_{\gamma(L^2(\R^d), H^{-s,q}(\R^d))} 
\lesssim_{d,s,q,\eta} \|g\|_{L^\eta(\R^d)}.
\]
\item\label{it:M_g_Sobolev2} {\rm (Necessary conditions)} If for all $g\in L^{\eta}(\R^d)$ one has $\|M_g\|_{\gamma(L^2(\R^d), H^{-s,q}(\R^d))} 
\lesssim_{d,s,q,\eta} \|g\|_{L^\eta(\R^d)}
$
for some $\eta,q\in [2,\infty)$ and $s\geq 0$, then 
$\frac{s}{d}+\frac{1}{q}\geq \frac{1}{\eta}+\frac{1}{2}$, $\eta\leq q$, and $s>\frac{d}{2}$.
\item\label{it:M_g_Sobolev3} {\rm (Necessary conditions)} If additionally to \eqref{it:M_g_Sobolev2} one has $\frac{s}{d}+\frac{1}{q}= \frac{1}{\eta}+\frac{1}{2}$, then $s\in (\frac{d}{2},d)$ and $\eta\in (2,q)$.
\end{enumerate}
\end{proposition}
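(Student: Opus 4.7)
The plan rests on the identification
\[
\|M_g\|_{\g(L^2(\R^d), H^{-s,q}(\R^d))} \;\eqsim\; \|A_{\mathscr{G}_s, g}\|_{\g(L^2(\R^d), L^q(\R^d))},
\]
obtained by precomposing with the isomorphism $(1-\Delta)^{-s/2}\colon H^{-s,q}(\R^d)\to L^q(\R^d)$, which turns $M_g$ into the kernel operator $A_{\mathscr{G}_s, g}$ of \eqref{eq:Afg} with $\mathscr{G}_s = \mathscr{F}^{-1}((1+4\pi^2|\cdot|^2)^{-s/2})$. Applying the square-function characterization \eqref{eq: square fct char gamma} with any ONB $(h_n)$ of $L^2(\R^d)$ gives $\sum_n |A_{\mathscr{G}_s, g} h_n(x)|^2 = \|\mathscr{G}_s(x-\cdot)\, g\|_{L^2}^2 = (\mathscr{G}_s^2 \ast |g|^2)(x)$, so that
\[
\|M_g\|_{\g(L^2, H^{-s,q})}^2 \;\eqsim\; \|\mathscr{G}_s^2 \ast |g|^2\|_{L^{q/2}(\R^d)}.
\]
This explicit formula drives all parts except the generic sufficiency \eqref{it:M_g_Sobolev1a}, which is handled directly by Theorem \ref{t:gammaYoung}.

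\textbf{Sufficiency.} For \eqref{it:M_g_Sobolev1a} I would apply Theorem \ref{t:gammaYoung} to $A_{\mathscr{G}_s, g}$ with $r := d/(d-s)$; the assumption $s\in(d/2,d)$ gives $r\in(2,\infty)$ and $\mathscr{G}_s\in L^{r,\infty}(\R^d)$, while the Young relation $1/q+1/2 = 1/r+1/\eta$ is precisely the equality form of \eqref{it:M_g_Sobolev1a}. The strict case is reduced to the equality case by choosing $(s', q')$ with $s'/d+1/q' = 1/\eta+1/2$, $d/2<s'\leq s$, and $\eta<q'\leq q$, followed by the Sobolev embedding $H^{-s',q'}(\R^d)\embed H^{-s,q}(\R^d)$. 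For \eqref{it:M_g_Sobolev1b} and \eqref{it:M_g_Sobolev1c} I would apply scalar Young's inequality to $\mathscr{G}_s^2\ast |g|^2$ in the explicit formula: case \eqref{it:M_g_Sobolev1b} requires $\mathscr{G}_s\in L^2(\R^d)$ (equivalent to $s>d/2$), and case \eqref{it:M_g_Sobolev1c} requires $\mathscr{G}_s\in L^q(\R^d)$ (equivalent to $s/d+1/q>1$, using $q\geq 2$).

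\textbf{Necessity.} For \eqref{it:M_g_Sobolev2}, three test families pin down the three conditions. Taking $g = \one_{B_R(0)}$ and restricting the $L^{q/2}$-norm of $\mathscr{G}_s^2\ast g^2$ to $|x|<R/2$ dominates $\int_{B_{R/2}} \mathscr{G}_s^2$, which is infinite if $s\leq d/2$. Taking $g = \sum_{i=1}^N \phi(\cdot - x_i)$ with widely separated centers renders the functions $\mathscr{G}_s^2\ast \phi(\cdot-x_i)^2$ essentially disjoint (by the exponential decay of $\mathscr{G}_s$ at infinity), producing $\|M_g\|_\g \sim N^{1/q}$ against $\|g\|_{L^\eta} \sim N^{1/\eta}$, which forces $\eta\leq q$ as $N\to\infty$. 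For the inequality $s/d+1/q \geq 1/\eta + 1/2$, I would use $g_\lambda(x) := g(\lambda x)$ and the conjugation $M_{g_\lambda} = D_\lambda M_g D_{1/\lambda}$, combined with $\|D_{1/\lambda}\|_{\calL(L^2)} = \lambda^{d/2}$ and $\|D_\lambda\|_{\calL(\dot{H}^{-s,q})} = \lambda^{-s-d/q}$; letting $\lambda\to\infty$ (so that the $H^{-s,q}$-norm agrees with $\dot{H}^{-s,q}$ at the concentration scale) and matching exponents against $\|g_\lambda\|_{L^\eta} = \lambda^{-d/\eta}\|g\|_{L^\eta}$ yields the claim.

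\textbf{Equality case.} Under the equality and the conclusions of \eqref{it:M_g_Sobolev2}, the strict bound $\eta<q$ is immediate since $\eta=q$ would give $s=d/2$, contradicting $s>d/2$; and $s<d$ is arithmetic once $\eta>2$ is known, since $s\geq d$ combined with equality forces $1/\eta\geq 1/2+1/q$, hence $\eta<2$. I therefore expect the main obstacle to be $\eta>2$: supposing $\eta=2$, equality gives $s/d+1/q=1$, placing $\mathscr{G}_s$ exactly on the boundary $L^{q,\infty}\setminus L^q$. Testing with the concentrating family $g_\epsilon(x) := \epsilon^{-d/2}\phi(x/\epsilon)$ (so that $\|g_\epsilon\|_{L^2}$ is independent of $\epsilon$) and splitting the $L^{q/2}$-integral of $\mathscr{G}_s^2\ast g_\epsilon^2$ into the regions $|x|\lesssim\epsilon$ and $|x|\gtrsim\epsilon$, the outer region contributes $\eqsim \|\mathscr{G}_s\|_{L^q(|x|\geq c\epsilon)}^2$, which diverges logarithmically as $\epsilon\to 0$ by the critical integrability of $\mathscr{G}_s$; this contradicts the assumed estimate and closes the argument.
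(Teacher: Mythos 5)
Your proposal is correct, and its core — conjugating by $(1-\Delta)^{-s/2}$ to reduce to the kernel operator $A_{\mathscr{G}_s,g}$, the square-function identity $\|M_g\|_{\g(L^2,H^{-s,q})}^2\eqsim\|\mathscr{G}_s^2*|g|^2\|_{L^{q/2}}$, the weak-type Young inequality with $r=d/(d-s)$ for case (a), and scalar Young for the endpoints (b),(c) — is exactly the paper's. You diverge in two necessity sub-arguments. For $\frac{s}{d}+\frac1q\geq\frac1\eta+\frac12$ you use the dilation conjugation $M_{g_\lambda}=D_\lambda M_g D_{1/\lambda}$, whereas the paper tests with $\one_{B_\delta}$ and computes $\|\mathscr{G}_s^2*\one_{B_\delta}\|_{L^{q/2}}$ from both sides before letting $\delta\to0$; your route is cleaner, but the phrase ``the $H^{-s,q}$-norm agrees with $\dot H^{-s,q}$ at the concentration scale'' should be replaced by the one-sided scaling inequality $\|D_\lambda f\|_{H^{-s,q}}\gtrsim\lambda^{-s-d/q}\|f\|_{H^{-s,q}}$ for $\lambda\geq1$ (the same property the paper invokes in the proof of Proposition \ref{p:necessityLinfty_hom}); applied inside the Gaussian sum this gives $\|D_\lambda M_g\|_{\g}\gtrsim\lambda^{-s-d/q}\|M_g\|_{\g}$, and the exponent comparison then goes through rigorously. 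For excluding $\eta=2$ in the equality case, you detect the logarithmic divergence of $\|\mathscr{G}_s\|_{L^q(c\epsilon\leq|x|\leq1)}$ against an $L^2$-normalized concentrating family, which requires the (easy but necessary) observation that $\mathscr{G}_s^2*g_\epsilon^2(x)\eqsim\mathscr{G}_s(x)^2$ for $|x|\geq C\epsilon$; the paper instead uses an approximate identity and Fatou's lemma to force $\mathscr{G}_s\in L^q(B_r)$, contradicting $(s-d)q=-d$. Both detect the same critical non-integrability, and both alternatives are valid.
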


A similar result also holds for homogeneous Sobolev spaces.

\begin{proposition}[$\gamma$-bounds for multiplication operators in homogeneous Sobolev spaces]
\label{prop:M_g_Sobolev_hom} 
Assume that $s\geq 0$, $\eta,q\in [2,\infty)$ and $g\in L^\eta(\R^d)$. 
Let $M_g$ be the multiplication operator by $g$.  
Then $M_g\in \g(L^2(\R^d), \dot{H}^{-s,q}(\R^d))$ and 
\[
\|M_g\|_{\gamma(L^2(\R^d), \dot{H}^{-s,q}(\R^d))} 
\lesssim_{d,s,q,\eta} \|g\|_{L^\eta(\R^d)}.
\]
if and only if 
\begin{equation}
\label{eq:conditions_parameters_homogeneous_gamma}
\frac{s}{d}+\frac{1}{q}= \frac{1}{\eta}+\frac{1}{2} , \qquad s\in \Big(\frac{d}{2},d\Big) \ \quad \text{ and }\quad \  \eta\in (2,q).
\end{equation}
\end{proposition}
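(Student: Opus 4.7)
The plan is to reduce everything to the convolution-type operator $A_{f,g}$ studied in Section \ref{s:gamma_consequences}. Since $\|F\|_{\dot H^{-s,q}(\R^d)} = \|(-\Delta)^{-s/2}F\|_{L^q(\R^d)}$ and $(-\Delta)^{-s/2}$ acts as convolution with the Riesz potential kernel $I_s(x) = c_{s,d}|x|^{-(d-s)}$, we have
\[
(-\Delta)^{-s/2}(M_g h) = A_{I_s,g} h,
\]
so the bound $\|M_g\|_{\gamma(L^2, \dot H^{-s,q})} \lesssim \|g\|_{L^\eta}$ is equivalent to $\|A_{I_s,g}\|_{\gamma(L^2, L^q)} \lesssim \|g\|_{L^\eta}$.

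For sufficiency, I would apply Theorem \ref{t:gammaYoung} with $f = I_s$. The standard inclusion $I_s \in L^{r,\infty}(\R^d)$ with $r = d/(d-s)$, the range $s \in (d/2, d)$, and the hypothesis $\eta \in (2, q)$ place the exponents $\eta, q, r$ in $(2, \infty)$, while the equation in \eqref{eq:conditions_parameters_homogeneous_gamma} is exactly the scaling relation $1/q + 1/2 = 1/r + 1/\eta$ required by the $\gamma$-Young inequality. Then
\[
\|M_g\|_{\gamma(L^2, \dot H^{-s,q})} = \|A_{I_s,g}\|_{\gamma(L^2, L^q)} \lesssim_{d,s,q,\eta} \|I_s\|_{L^{r,\infty}}\|g\|_{L^\eta} \lesssim_{d,s,q,\eta} \|g\|_{L^\eta}.
\]

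For necessity, I would run a scaling argument. Set $g_\lambda(x) := g(\lambda x)$ and let $U_\lambda f := \lambda^{d/2}f(\lambda \cdot)$ be the $L^2$-unitary rescaling. A direct computation gives the intertwining $M_{g_\lambda} U_\lambda = U_\lambda M_g$. Combining the unitary invariance of the $\gamma$-norm under precomposition by $U_\lambda$ on $L^2$ with the homogeneous scaling $\|U_\lambda F\|_{\dot H^{-s,q}} = \lambda^{d/2-s-d/q}\|F\|_{\dot H^{-s,q}}$ (read off from the Fourier-multiplier form of $(-\Delta)^{-s/2}$) yields
\[
\|M_{g_\lambda}\|_{\gamma(L^2, \dot H^{-s,q})} = \lambda^{d/2-s-d/q}\|M_g\|_{\gamma(L^2, \dot H^{-s,q})}.
\]
Since $\|g_\lambda\|_{L^\eta} = \lambda^{-d/\eta}\|g\|_{L^\eta}$, the assumed bound can hold for all $\lambda > 0$ only if the exponents match, i.e.\ $d/2 - s - d/q = -d/\eta$, which is the scaling equation in \eqref{eq:conditions_parameters_homogeneous_gamma}. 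The remaining strict inequalities follow in two steps: purely from the equation one obtains $s < d$ (from $q < \infty$ and $\eta \geq 2$), and $s > d/2$ once one knows $\eta < q$ with $\eta \geq 2$; the exclusion of the borderline cases $\eta = 2$ and $\eta = q$ is then obtained by transplanting the endpoint failures of Theorem \ref{t:gammaYoung} recorded in Propositions \ref{prop:eta2} and \ref{prop:etaq}, which show that at either endpoint the weak-$L^r$ hypothesis on the convolution kernel cannot be weakened to $L^r$, while the Riesz kernel $I_s$ belongs to $L^{r,\infty}\setminus L^r$.

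The hard part will be the endpoint necessity: scaling alone cannot rule out $\eta \in \{2, q\}$, and the exclusion has to be routed through the failure of the $\gamma$-Young inequality at those boundary cases, which in turn relies on the fact that $I_s \in L^{r,\infty}\setminus L^r$. The sufficiency direction and the scaling identity, by contrast, are essentially bookkeeping once Theorem \ref{t:gammaYoung} is available.
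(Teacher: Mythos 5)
Your sufficiency argument is exactly the paper's: write $(-\Delta)^{-s/2}M_g = A_{\cR_s,g}$ with the Riesz kernel $\cR_s(x)=c_{d,s}|x|^{s-d}\in L^{r,\infty}(\R^d)$, $r=d/(d-s)$, check that \eqref{eq:conditions_parameters_homogeneous_gamma} is precisely the H\"older relation $\frac1q+\frac12=\frac1r+\frac1\eta$ with $\eta,q,r\in(2,\infty)$, and invoke Theorem \ref{t:gammaYoung}. For necessity your route differs in organization but is mostly sound: the paper rescales the test function $\wt g_\lambda=\wt g(\lambda\cdot)$ inside the square-function identity \eqref{eq:identityAfg}, whereas you rescale at the operator level via the $L^2$-unitary $U_\lambda$ and the intertwining $M_{g_\lambda}U_\lambda=U_\lambda M_g$; both give the scaling equality. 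Your exclusion of $\eta=2$ via Proposition \ref{prop:eta2} (which would force $\cR_s\in L^q(\R^d)$, impossible for a nonzero homogeneous kernel of negative degree) is a clean global substitute for the paper's localized argument below \eqref{eq:L1est}, and the exclusion of $\eta=q$ via Proposition \ref{prop:etaq} is likewise valid.

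The gap is in how you reach $\eta<q$. You derive $s>d/2$ from the scaling equality \emph{together with} $\eta<q$, but your tools only exclude the single point $\eta=q$, not the regime $\eta>q$. The hypotheses allow $\eta,q\in[2,\infty)$ arbitrary, and the scaling equality is perfectly consistent with $\eta>q$ (e.g.\ $\eta=4$, $q=2$, $s=d/4$): neither scaling nor Propositions \ref{prop:eta2}--\ref{prop:etaq} touch this case, so as written the conclusion $s\in(d/2,d)$, $\eta\in(2,q)$ does not follow. The paper closes this in one of two ways you should adopt: either prove $s>\frac d2$ \emph{directly} by testing \eqref{eq: cond tilde g Riesz} with $\wt g=\one_{B_\delta}$ and using the lower bound \eqref{eq: integral estimate 1}--\eqref{eq: beta function} (this is \eqref{eq: upper bound integral delta}; then $\eta<q$ follows from the equality), or rule out $\eta>q$ by the argument of Remark \ref{rem:noneetaq}: via \eqref{eq:identityAfg} your hypothesis says convolution with the nonnegative kernel $|\cR_s|^2$ is bounded from $L^{\eta/2}(\R^d)$ into $L^{q/2}(\R^d)$, and a nonzero translation-invariant operator on $\R^d$ cannot decrease the Lebesgue exponent. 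With either patch the argument is complete.
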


The conditions in the previous result are also contained in item \eqref{it:M_g_Sobolev1a} of Proposition \ref{prop:M_g_Sobolev}. In light of Proposition \ref{prop:M_g_Sobolev_hom} and the scaling invariance of the homogeneous Sobolev spaces $\dot{H}^{-s,q}(\R^d)$, the condition $\frac{s}{d}+\frac{1}{q}= \frac{1}{\eta}+\frac{1}{2}$
respects the scaling of the spaces involved, and will be referred to as the \emph{sharp} case, as in all the other cases considered in Proposition \ref{prop:M_g_Sobolev}, there is a loss of smoothness (and hence, a break of the scaling invariance). To obtain the sharp case, we necessarily require the weak Lebesgue spaces in Theorem \ref{t:gammaYoung}, see Remark \ref{r:necessity_of_weak_spaces}.

\smallskip

Before presenting the proofs, let us discuss the connection between Theorem \ref{t:gammaYoung} and Proposition \ref{prop:M_g_Sobolev} (the one with Proposition \ref{prop:M_g_Sobolev_hom} is similar). Note that the condition $M_g\in \g(L^2(\R^d),H^{-s,q}(\R^d))$ holds if and only if $(1-\Delta)^{-s/2} M_g \in \g(L^2(\R^d),L^q(\R^d))$. Moreover, since 
\begin{equation}
\label{eq:delta_Bessel_potential_kernel}
\big[(1-\Delta)^{-s/2} M_g f\big](x)= \int_{\R^d} \mathscr{G}_s(x-y) g(y)f (y)\,\di y = A_{\mathscr{G}_s,g}f(x)
\end{equation}
where $\mathscr{G}_s=\mathscr{F}^{-1} ((1+4\pi^2|\cdot|^2)^{-s/2})$ is the Bessel potential kernel, to prove Proposition \ref{prop:M_g_Sobolev} it is enough to estimate the weak Lebesgue norm of $\cG_s$. In particular, we will see that the use of the weak Lebesgue spaces in Theorem \ref{t:gammaYoung} is necessary to obtain sharp estimates for Gaussian series in the next section, see Remark \ref{r:necessity_of_weak_spaces}. 

\begin{remark}\label{rem:groups}
It will be clear from the proofs below that Theorem \ref{t:gammaYoung} holds in case $\R^d$ is replaced by a locally compact Abelian group equipped with the Haar measure (e.g.\ $\T^d$).  
In particular, this implies that Proposition \ref{prop:M_g_Sobolev} holds true also in the periodic case.  
\end{remark}

This section is organized as follows. In Subsection \ref{ss:gamma_young_proof}, we prove Theorem \ref{t:gammaYoung} and we investigate the above-mentioned endpoint cases $\eta=2$ and $\eta=q$ in Propositions \ref{prop:eta2} and \ref{prop:etaq}, respectively. Finally, in Subsection \ref{ss:proof_corollary_Mg}, we prove Proposition \ref{prop:M_g_Sobolev}.

\subsection{Proof of Theorem \ref{t:gammaYoung} and endpoints}
\label{ss:gamma_young_proof}
We begin by proving Theorem \ref{t:gammaYoung}.

\begin{proof}[Proof of Theorem \ref{t:gammaYoung}]
Let $(h_k)_{k\geq 1}$ be an orthonormal basis for $L^2(\R^d)$. From the square-function characterization of $\gamma(L^2(\R^d),L^q(\R^d))$ in \eqref{eq: square fct char gamma}, it follows that 
\begin{align*}
\|A_{f,g}\|_{\gamma(L^2(\R^d),L^q(\R^d))}& \eqsim_q \Big\|\Big(\sum_{k\geq 1} |A_{f,g} h_k|^2 \Big)^{1/2} \Big\|_{L^q(\R^d)}.
\end{align*}
The Parseval identity in $L^2(\R^d)$ applied pointwise in $\R^d$ implies
\begin{align*}
 \sum_{k\geq 1} |A_{f,g} h_k(x)|^2 
 &=
  \sum_{k\geq 1} \Big|\int_{\R^d} f(x-y) g(y) h_k(y)\,\di y\Big|^2   = \int_{\R^d}|f(x-y)|^2 |g(y)|^2 \, \di y,
\end{align*}
whenever $x\in \R^d$ is such that the last term in the previous is finite. Combining the previous observations, we have
\begin{align}\label{eq:identityAfg}
\|A_{f,g}\|_{\gamma(L^2(\R^d),L^q(\R^d))}^2\eqsim_q \Big\|x\mapsto \int_{\R^d} F(x-y) G(y) \,\di y\Big\|_{L^{q/2}(\R^d)},
\end{align}
where $F = |f|^2$ and $G = |g|^2$.  Applying Young's inequality for weak type spaces (see \cite[Theorem 1.4.25]{Grafakos1}) we obtain, for $\tfrac{2}{q}+1 = \tfrac{2}{r}+\tfrac{2}{\eta}$,
\begin{align*}
\Big\|x\mapsto \int_{\R^d} F(x-y) G(y) \, \di y\Big\|_{L^{q/2}(\R^d)} & \lesssim_{\eta,q,r} \|F\|_{L^{r/2,\infty}(\R^d)} \|G\|_{L^{\eta/2}(\R^d)} 
 = \|f\|_{L^{r,\infty}(\R^d)}^2 \|g\|_{L^{\eta}(\R^d)}^2. 
\end{align*}
The claimed estimate \eqref{eq:gamma_young} now follows from the above and \eqref{eq:identityAfg}. 
\end{proof}

As commented below Theorem \ref{t:gammaYoung}, below we discuss the validity of the $\gamma$-Young inequality \eqref{eq:gamma_young}. For that purpose, let us note that, from \cite[Chapter 4]{Simon05}, it follows that $A_{f,g}\in \gamma(L^2(\R^d), L^2(\R^d)) = S^2(L^2(\R^d))$ if and only if $f,g\in L^2(\R^d)$. Moreover, in this case 
\begin{align}\label{eq:Simon}
\|A_{f,g}\|_{\gamma(L^2(\R^d), L^2(\R^d))} = \|A_{f,g}\|_{S^2(L^2(\R^d))} = \|f\|_{L^2(\R^d)} \|g\|_{L^2(\R^d)}.
\end{align}
Below, we extend this result to $q>2$ if $\eta = 2$. 

\begin{proposition}[Endpoint case $\eta=2$]\label{prop:eta2}
Let $q\geq 2$ and $\eta=2$. For a measurable function $f:\R^d\to \C$, the following are equivalent 
  \begin{enumerate}[\rm (1)]
  \item\label{it:eta21} There is $C$ such that for all $g\in L^2(\R^d)$, $\|A_{f,g}\|_{\gamma(L^2(\R^d), L^q(\R^d))}\leq C \|g\|_{L^2(\R^d)}$; \item\label{it:eta23} $f\in L^q(\R^d)$. 
  \end{enumerate}
Moreover, letting $C_{f}$ denote the infimum over all admissible constants $C$ in \eqref{it:eta21}, one has 
$C_{f} \eqsim_{q}  \|f\|_{L^q(\R^d)}.$ 
\end{proposition}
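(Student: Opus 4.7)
The plan is to reduce everything to the identity \eqref{eq:identityAfg} already established in the proof of Theorem \ref{t:gammaYoung}. Setting $F = |f|^2$ and $G = |g|^2$, that identity gives
\begin{equation*}
\|A_{f,g}\|_{\gamma(L^2(\R^d), L^q(\R^d))}^2 \eqsim_q \|F * G\|_{L^{q/2}(\R^d)},
\end{equation*}
and since $\|g\|_{L^2(\R^d)}^2 = \|G\|_{L^1(\R^d)}$, condition \eqref{it:eta21} is equivalent to the one-sided convolution bound
\begin{equation*}
\|F * G\|_{L^{q/2}(\R^d)} \lesssim_q C_f^2 \, \|G\|_{L^1(\R^d)} \quad \text{for all } 0 \leq G \in L^1(\R^d).
\end{equation*}
Once this reformulation is in place, the equivalence becomes a classical statement about the $L^{q/2} * L^1 \embed L^{q/2}$ endpoint of Young's inequality.

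For \eqref{it:eta23}$\Rightarrow$\eqref{it:eta21} I would just apply Young's inequality directly: if $f \in L^q(\R^d)$ then $F \in L^{q/2}(\R^d)$ with $\|F\|_{L^{q/2}} = \|f\|_{L^q}^2$, so $\|F * G\|_{L^{q/2}} \leq \|f\|_{L^q}^2 \|G\|_{L^1}$ and tracing back through the identity yields $C_f \lesssim_q \|f\|_{L^q(\R^d)}$.

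For \eqref{it:eta21}$\Rightarrow$\eqref{it:eta23} I would test against an approximate identity. Fix a smooth non-negative bump $\phi$ with $\int_{\R^d} \phi = 1$ and set $G_\varepsilon(x) = \varepsilon^{-d}\phi(x/\varepsilon)$, so $\|G_\varepsilon\|_{L^1} = 1$. The convolution estimate above gives the uniform bound $\|F * G_\varepsilon\|_{L^{q/2}} \lesssim_q C_f^2$. Since the $L^{q/2}$-bound forces $F * G_\varepsilon$ to be finite a.e., one extracts that $F \in L^1_{\loc}(\R^d)$ by choosing $\phi$ bounded below on a small ball. Lebesgue's differentiation theorem then yields $F * G_\varepsilon \to F$ a.e.\ as $\varepsilon \downarrow 0$, and Fatou's lemma closes the loop:
\begin{equation*}
\|F\|_{L^{q/2}(\R^d)} \leq \liminf_{\varepsilon \downarrow 0} \|F * G_\varepsilon\|_{L^{q/2}(\R^d)} \lesssim_q C_f^2,
\end{equation*}
i.e.\ $\|f\|_{L^q(\R^d)}^2 \lesssim_q C_f^2$, giving both the implication and the matching lower bound in the norm comparison.

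The only mildly delicate point is the extraction of local integrability of $F$ from the operator bound, needed before Lebesgue differentiation can be invoked; this is a standard approximate-identity argument but is the sole place where one should not simply quote Young's inequality. Everything else is book-keeping through the identity \eqref{eq:identityAfg}.
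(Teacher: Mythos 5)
Your proposal is correct and follows essentially the same route as the paper: both directions reduce to the convolution identity \eqref{eq:identityAfg}, the forward implication is Young's inequality $L^{q/2}*L^1\embed L^{q/2}$, and the converse tests against a normalized approximate identity $g_\varepsilon=G_\varepsilon^{1/2}$. The only cosmetic difference is that the paper quotes an approximate-identity convergence result to pass to the limit, whereas you justify the lower bound by hand via local integrability, Lebesgue differentiation and Fatou --- a slightly more self-contained treatment of the same step.
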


From \eqref{eq:gamma_estimate_for_A_is_symmetric}, the condition in \eqref{it:eta21} can be equivalently formulated with $A_{g,f}$ in place of $A_{f,g}$.

\begin{proof}
We first prove \eqref{it:eta23}$\Rightarrow$\eqref{it:eta21}. 
Let $g\in L^2(\R^d)$, and set $F = |f|^2$ and $G = |g|^2$. Using \eqref{eq:identityAfg} and Young's inequality for convolutions, we find that
\begin{align}
\label{eq: A f g F G ineq}
\|A_{f,g}\|_{\gamma(L^2(\R^d),L^q(\R^d))}^2  \eqsim_q \|F*G\|_{L^{q/2}(\R^d)} 
 \leq \|F\|_{L^{q/2}(\R^d)} \|G\|_{L^{1}(\R^d)} 
 = \|f\|_{L^q(\R^d)}^2 \|g\|_{L^2(\R^d)}^2. 
\end{align}
Next, we prove \eqref{it:eta21}$\Rightarrow$\eqref{it:eta23}. 
For \(m\geq 1\), set $f_{m}:=f\,\mathbf 1_{\{|f|\leq m\}}\mathbf 1_{B_m}$ and $F_{m}:=|f_{m}|^2$.
Then \(f_{m}\in L^q(\mathbb R^d)\). Moreover, for every
\(g\in L^2(\mathbb R^d)\), writing \(G=|g|^2\), we have
\(0\leq F_m\leq F\), and hence
\[
\|F_{m}*G\|_{L^{q/2}(\mathbb R^d)}
\leq
\|F*G\|_{L^{q/2}(\mathbb R^d)}.
\]
Let
\begin{align}
\label{eq: def constant C 2 f}
C_{f}:=\sup_{\|g\|_{L^2(\R^d)}\leq 1} \|A_{g,f}\|_{\gamma(L^2(\R^d),L^q(\R^d))}.
\end{align}
Using the square-function identity \eqref{eq:identityAfg}, this gives
\[
\|A_{f_{m},g}\|_{\gamma(L^2(\mathbb R^d),L^q(\mathbb R^d))}
\lesssim_q
\|A_{f,g}\|_{\gamma(L^2(\mathbb R^d),L^q(\mathbb R^d))}.
\]
Consequently,
\[
C_{f_{m}}
:=
\sup_{\|g\|_{L^2}\leq1}
\|A_{f_{m},g}\|_{\gamma(L^2,L^q)}
\lesssim_q C_f .
\]

Let $G$ be nonnegative and such that $\|G\|_{L^1(\bR^d)}=1$. 
Then let $G_n(x) = n^dG(nx)$, for $n\geq 1$ and define $g_n:=G_n^{1/2}$, so that 
$\|g_n\|_{L^2}^2 = \|G_n\|_{L^1} = 1$ for all $n\geq 1$.
Thus, using \cite[Proposition 1.2.32]{HNVW1} and \eqref{eq:identityAfg}, we find that 
\begin{align}
\|f_{m}\|_{L^q(\R^d)}^2 = \|F_{m}\|_{L^{q/2}(\R^d)}& = \lim_{n\to \infty} \|G_n*F_{m}\|_{L^{q/2}(\R^d)} \eqsim_q\lim_{n\to\infty}\|A_{f_m,g_n}\|_{\gamma(L^2(\R^d),L^q(\R^d))}^2
\leq C_{f}^2.
\end{align}
Letting $m\to \infty$, we obtain $f\in L^q(\R^d)$ and the required estimate. For the final assertion, it suffices to combine \eqref{eq: def constant C 2 f} with \eqref{eq: A f g F G ineq}.
\end{proof}

\begin{proposition}[Endpoint case $\eta=q$]\label{prop:etaq}
Let 
$\eta\in [2, \infty)$. For a measurable function $f:\R^d\to \C$, the following are equivalent 
  \begin{enumerate}[\rm (1)]
  \item\label{it:etaq1} There is $C$ such that for all $g\in L^\eta(\R^d)$, $\|A_{f,g}\|_{\gamma(L^2(\R^d), L^{\eta}(\R^d))}\leq C \|g\|_{L^\eta(\R^d)}$;
  \item\label{it:etaq3} $f\in L^2(\R^d)$. 
  \end{enumerate}
Moreover, letting $C_{f}$ denote the infimum over all admissible constants $C$ in \eqref{it:etaq1}, then one has 
$C_{f} \eqsim  \|f\|_{L^2(\R^d)}.$
\end{proposition}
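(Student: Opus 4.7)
The plan is to apply the square-function identity \eqref{eq:identityAfg} from the proof of Theorem \ref{t:gammaYoung}, namely
\begin{equation*}
\|A_{f,g}\|^2_{\gamma(L^2(\R^d),L^\eta(\R^d))} \eqsim_\eta \|F*G\|_{L^{\eta/2}(\R^d)} \quad \text{with } F=|f|^2,\ G=|g|^2,
\end{equation*}
together with the standard (strong-type) Young convolution inequality; the assumption $\eta\geq 2$ ensures that $L^{\eta/2}$ is a Banach space, so no weak-type versions are needed.

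For the implication \eqref{it:etaq3}$\Rightarrow$\eqref{it:etaq1}, if $f\in L^2(\R^d)$ then $F\in L^1(\R^d)$, and Young's inequality immediately gives $\|F*G\|_{L^{\eta/2}}\leq \|F\|_{L^1}\|G\|_{L^{\eta/2}}=\|f\|_{L^2}^2\|g\|_{L^\eta}^2$, which yields $C_f\lesssim_\eta \|f\|_{L^2(\R^d)}$. This is the analogue, with the roles of $f$ and $g$ interchanged, of the forward direction of Proposition \ref{prop:eta2}.

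The interesting direction is \eqref{it:etaq1}$\Rightarrow$\eqref{it:etaq3}. Here I would test the hypothesis against $g_R=\mathbf{1}_{B(0,R)}$, for which $G_R=\mathbf{1}_{B(0,R)}$ and $\|g_R\|_{L^\eta}^2=|B(0,R)|^{2/\eta}$, to obtain from \eqref{eq:identityAfg} the upper bound
\begin{equation*}
\|F*G_R\|_{L^{\eta/2}(\R^d)}\lesssim_\eta C_f^2\,|B(0,R)|^{2/\eta}.
\end{equation*}
A matching lower bound comes from the elementary observation that for $|x|\leq R/2$ one has $B(0,R/2)\subseteq B(x,R)$, whence $(F*G_R)(x)\geq \int_{B(0,R/2)}F$ pointwise in $[0,\infty]$, and therefore $\|F*G_R\|_{L^{\eta/2}}\geq |B(0,R/2)|^{2/\eta}\int_{B(0,R/2)}F$. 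Comparing both estimates and using $|B(0,R)|=2^d|B(0,R/2)|$ produces a uniform-in-$R$ bound $\int_{B(0,R/2)}|f|^2\lesssim_{\eta,d} C_f^2$; monotone convergence as $R\to\infty$ then yields $\|f\|_{L^2}\lesssim_{\eta,d}C_f$ and completes the equivalence together with the claimed two-sided estimate $C_f\eqsim\|f\|_{L^2}$.

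The only subtlety I anticipate is that $f$ is merely measurable a priori, so $F=|f|^2$ need not be locally integrable at the outset; this is precisely why the pointwise lower bound is phrased in $[0,\infty]$ and the uniform control on $\int_{B(0,R/2)}|f|^2$ is extracted before sending $R\to\infty$, in exactly the same spirit as the approximation argument used in Proposition \ref{prop:eta2}.
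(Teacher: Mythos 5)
Your proof is correct and follows essentially the same route as the paper: the forward implication is the identical application of \eqref{eq:identityAfg} and Young's inequality, and the converse likewise reduces matters to the $L^{\eta/2}\to L^{\eta/2}$ boundedness of convolution with the nonnegative kernel $F=|f|^2$, tested against indicator functions of balls. The only difference is in the final elementary step: the paper factors the converse through Lemma \ref{lem:positive}, whose lower bound comes from the exact Fubini identity $\|k_L*\one_{B_R}\|_{L^1}=\|k_L\|_{L^1}|B_R|$ followed by H\"older's inequality on $B_{R+L}$, whereas you use the direct pointwise bound $(F*\one_{B_R})(x)\geq \int_{B_{R/2}}F$ for $|x|\leq R/2$; both are valid, yours merely picking up a harmless extra factor $2^{2d/\eta}$ in the constant.
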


As above, from \eqref{eq:gamma_estimate_for_A_is_symmetric}, the condition in \eqref{it:etaq1} can be equivalently formulated with $A_{f,g}$ replaced by $A_{g,f}$.
To prove the above, we use the following lemma for convolutions with positive kernels. 

\begin{lemma}
\label{lem:positive}
Let $k\geq 0$ be measurable and let $p\in [1,\infty)$. Then the following are equivalent 
\begin{enumerate}[{\rm(1)}]
\item\label{it:positive1} There exists a $C$ such that for all $\phi\in L^p(\R^d)$, $\|k*\phi\|_{L^p(\R^d)}\leq C\|\phi\|_{L^p(\R^d)}$.
\item\label{it:positive2} There exists a $C$ such that for all $\phi\in L^p_+(\R^d)$, $\|k*\phi\|_{L^p(\R^d)}\leq C\|\phi\|_{L^p(\R^d)}$.
\item\label{it:positive3} $k\in L^1(\R^d)$.
\end{enumerate}
Moreover, in this case $\|k\|_{L^1(\R^d)} = \inf C$, where the infimum is taken over all admissible constants $C$ in \eqref{it:positive1}. 
\end{lemma}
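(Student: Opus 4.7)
The plan is to prove the implications in the cyclic order \eqref{it:positive3} $\Rightarrow$ \eqref{it:positive1} $\Rightarrow$ \eqref{it:positive2} $\Rightarrow$ \eqref{it:positive3}, keeping track of the constants so as to get the equality $\|k\|_{L^1(\R^d)}=\inf C$ at the end.

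The implication \eqref{it:positive3} $\Rightarrow$ \eqref{it:positive1} is Young's convolution inequality $\|k\ast \phi\|_{L^p(\R^d)} \leq \|k\|_{L^1(\R^d)}\|\phi\|_{L^p(\R^d)}$, which already gives one side of the constant comparison, namely $\inf C \leq \|k\|_{L^1(\R^d)}$. The implication \eqref{it:positive1} $\Rightarrow$ \eqref{it:positive2} is trivial, with the infimal constants satisfying an obvious inequality. For \eqref{it:positive2} $\Rightarrow$ \eqref{it:positive1} with no loss in the constant, I would split any (complex-valued) $\phi\in L^p(\R^d)$ into real and imaginary, positive and negative parts $\phi=\phi_1-\phi_2+i(\phi_3-\phi_4)$ with $\phi_j\geq 0$, use $|k\ast \phi|\leq k\ast |\phi|$ (which uses $k\geq 0$), and apply \eqref{it:positive2} to $|\phi|\in L^p_+(\R^d)$, noting $\||\phi|\|_{L^p(\R^d)}=\|\phi\|_{L^p(\R^d)}$. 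This shows the optimal constants in \eqref{it:positive1} and \eqref{it:positive2} actually coincide.

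The main content is \eqref{it:positive2} $\Rightarrow$ \eqref{it:positive3}, together with the sharp bound $\|k\|_{L^1(\R^d)}\leq \inf C$. I would test \eqref{it:positive2} against the characteristic functions $\phi_R=\1_{B(0,R)}\in L^p_+(\R^d)$, $R>0$. For any fixed $M>0$ and any $x$ with $|x|\leq R-M$ one has $B(x,R)\supset B(0,M)$, so that
\begin{equation*}
(k\ast \phi_R)(x) = \int_{B(x,R)} k(y)\,\di y \geq \int_{B(0,M)} k(y)\,\di y.
\end{equation*}
Taking the $L^p$-norm and applying \eqref{it:positive2} yields
\begin{equation*}
|B(0,R-M)|^{1/p}\int_{B(0,M)} k(y)\,\di y \leq \|k\ast \phi_R\|_{L^p(\R^d)} \leq C\, |B(0,R)|^{1/p}.
\end{equation*}
Since $|B(0,R-M)|/|B(0,R)|\to 1$ as $R\to \infty$ with $M$ fixed, dividing by $|B(0,R)|^{1/p}$ and letting $R\to\infty$ gives $\int_{B(0,M)} k\,\di y\leq C$, and then monotone convergence in $M\to\infty$ yields $k\in L^1(\R^d)$ with $\|k\|_{L^1(\R^d)}\leq C$. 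Taking the infimum over admissible $C$ closes the loop.

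The only mildly delicate point is the sharp constant in the last step: one must choose test functions so that the outer $L^p$-norm of $k\ast\phi_R$ and the norm of $\phi_R$ itself have asymptotically matching weights, which is why expanding balls are the correct choice rather than a self-similar family like $\phi_R=R^{-d/p}\1_{B(0,R)}$. Apart from this, the argument is elementary.
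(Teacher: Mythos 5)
Your proof is correct, and for the main implication \eqref{it:positive2}$\Rightarrow$\eqref{it:positive3} it follows the same overall strategy as the paper (test the hypothesis against $\1_{B_R}$ and exploit that $|B_{R\pm M}|/|B_R|\to 1$ as $R\to\infty$), but the mechanism for extracting the lower bound differs. The paper truncates $k$ to $k_L=\1_{B_L}k$, uses the exact Fubini identity $\|k_L*\1_{B_R}\|_{L^1}=\|k_L\|_{L^1}|B_R|$, and then H\"older's inequality on the support $B_{R+L}$ to pass from the $L^1$- to the $L^p$-norm of the convolution; you instead observe the pointwise lower bound $(k*\1_{B_R})(x)\geq \int_{B_M}k$ for $x\in B_{R-M}$ and integrate that directly. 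Your route is slightly more elementary (no H\"older, no need to track the support of the convolution) and handles $p=1$ uniformly, whereas the paper has to note that case separately; both yield the sharp constant $\|k\|_{L^1}\leq C$. Your extra step \eqref{it:positive2}$\Rightarrow$\eqref{it:positive1} via $|k*\phi|\leq k*|\phi|$ is not needed for the cyclic equivalence (and the four-part decomposition you mention is superfluous once you pass to $|\phi|$), but it is harmless and does show the optimal constants in \eqref{it:positive1} and \eqref{it:positive2} agree.
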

\begin{proof}
\eqref{it:positive1}$\Rightarrow$\eqref{it:positive2} is trivial. 
\eqref{it:positive3}$\Rightarrow$\eqref{it:positive1} is immediate from Minkowski's convolution inequality. 
To prove
\eqref{it:positive2}$\Rightarrow$\eqref{it:positive3}, let $B_R$ and $B_L$ denote the balls centered at $0$ of radius $R$ and $L$, respectively. Define $k_L = \one_{B_L} k$, and $f_R =  \one_{B_R}$. 
Then, by the positivity of $k$, we see that
\begin{align*}
\|k_L*f_R\|_{L^p(\R^d)} \leq \|k*f_R\|_{L^p(\R^d)} \leq C\|f_R\|_{p} = C |B_R|^{1/p},
\end{align*}
where we used again Minkowski's convolution inequality, and where $|B_R|$ denotes the Lebesgue measure of the set $B_R$.
On the other hand, by Fubini's theorem and H\"older's inequality, one has
\begin{align}
\label{eq: conv equality k}
  \|k_L\|_{L^1(\R^d)} \|f_R\|_{L^1(\R^d)} &= \|k_L*f_R\|_{L^1(\R^d)} 
  \\ & \leq \|k_L*f_R\|_{L^p(\R^d)} |B_{R+L}|^{1/p'}
   \leq C |B_R|^{1/p} | B_{R+L}|^{1/p'},
\end{align}
where for $p>1$, $p' = \tfrac{p}{p-1}$. If $p=1$, then \eqref{eq: conv equality k} already gives the result.
Therefore, $\|k_L\|_{L^1(\R^d)}\leq C \frac{|B_{R+L}|^{1/p'}}{|B_R|^{1/p'}}$.
Letting $R\to \infty$ we see that $\|k_L\|_{L^1(\R^d)}\leq C$. Letting $L\to \infty$, the monotone convergence theorem gives that $\|k\|_{L^1(\R^d)}\leq C$.
\end{proof}

\begin{proof}[Proof of Proposition \ref{prop:etaq}]
\eqref{it:etaq3}$\Rightarrow$\eqref{it:etaq1} can be proved as in Proposition \ref{prop:eta2} by using \eqref{eq:identityAfg} and the Young inequality. 
To prove \eqref{it:etaq1}$\Rightarrow$\eqref{it:etaq3} note that as in \eqref{eq:identityAfg} we see that for all $G:=|g|^2\in L^{\eta/2}(\R^d)$ and $F:=|f|^2$,
\begin{align*}
\|G*F\|_{L^{\eta/2}(\R^d)} \eqsim_{\eta} \|A_{f,g}\|_{\gamma(L^2(\R^d),L^\eta(\R^d))}^2 \leq C_{f}^2 \|g\|_{L^\eta(\R^d)}^2 = C_{f}^2 \|G\|_{L^{\eta/2}(\R^d)}. 
\end{align*}
Since $F\geq 0$, it follows from Lemma \ref{lem:positive} that $\|F\|_{L^1(\R^d)}\lesssim_{\eta} C_{f}$. 
\end{proof}

\begin{remark}[Optimality of the conditions $\eta\geq 2$ and $\eta>q$]
\label{rem:noneetaq}
In this remark, we prove that none of the results proven in this subsection holds for either $\eta>q\geq 2$ or $\eta\in (0,2)$. 
The first follows from \eqref{eq:identityAfg} and the fact that, if $G\mapsto F*G$ is bounded from $L^{\eta/2}(\R^d)$ into $L^{q/2}(\R^d)$, and $\eta> q\geq 2$, then $F = 0$ due to \cite[Theorem 2.5.6]{Grafakos1}.  Next, we provide an alternative argument that works for any $q,\eta\in (0,\infty)$. Fix $F\in C^\infty_{{\rm c}}(\R^d)$, and suppose that $\|F*G\|_{L^{q/2}(\R^d)}\lesssim \|G\|_{L^{\eta/2}(\R^d)}$ holds for all $G\in C^\infty_c(\R^d)$ with implicit constant independent of $G$. For $\phi\in C^\infty_{{\rm c}}(\R^d)$ and $N\geq 1$, consider $G(x) = \sum_{k=1}^N \phi(x-kM)$, where $M$ will be chosen below. Then $F*G(x) = \sum_{k=1}^N F*\phi(x-kM)$. Choosing $M$ large enough, we have that the functions in the sum appearing in $G$ and $F*G$ have disjoint support. Therefore, 
\[\|G\|_{L^{\eta/2}(\R^d)} \eqsim_{\phi} N^{2/\eta} \qquad \text{ and } 
\qquad \|F*G\|_{L^{q/2}(\R^d)}\eqsim_{\phi, F} N^{2/q}.\]
Sending $N\to \infty$ in the estimate $\|F*G\|_{L^{q/2}(\R^d)}\lesssim \|G\|_{L^{\eta/2}(\R^d)}$ yields $\eta\leq q$. 

Finally, we show the necessity of $\eta\geq 2$. Indeed, if $\eta\in (0,2)$ and $F\in L^{q/2}(\R^d)$ takes values in $[0,\infty)$, then $\|F*G\|_{L^{q/2}(\R^d)}\lesssim \|G\|_{L^{\eta/2}(\R^d)}$ cannot hold with implicit constant independent of $G$. 
Indeed, suppose that the estimate holds. If $\eta\in (0,2)$ and $G\in C^{\infty}_{{\rm c}}(\R)$ such that $\int_{\R^d}G(x)\,\di x =1$, then taking $G_{\varepsilon} = \varepsilon^{-d} G(x/\varepsilon)$, it follows that $F*G_{\varepsilon}\to F$ in $L^{q/2}(\R^d)$. On the other hand, $\|G_{\varepsilon}\|_{L^{\eta/2}(\R^d)}\to 0$ as $\varepsilon\downarrow 0$, implying $F=0$. 
\end{remark}

\subsection{Proof of Propositions \ref{prop:M_g_Sobolev} and \ref{prop:M_g_Sobolev_hom}}
\label{ss:proof_corollary_Mg}
We begin by proving Proposition \ref{prop:M_g_Sobolev}, which concerns inhomogeneous Sobolev spaces.

\begin{proof}[Proof of Proposition \ref{prop:M_g_Sobolev}]
We begin by collecting some facts. First, as noticed below Proposition \ref{prop:M_g_Sobolev}, it is enough to show
$(1-\Delta)^{-s/2} M_g\in\g(L^2(\R^d),L^{q}(\R^d))$ with a corresponding estimate. As in \eqref{eq:delta_Bessel_potential_kernel}, we have
\begin{equation}
\label{eq:identity_AGg_proof}
(1-\Delta)^{-s/2} M_g = A_{\mathscr{G}_s,g} \qquad \text{ where }\qquad \mathscr{G}_s=\mathscr{F}^{-1}((1+|\cdot|^2)^{-s/2}),
\end{equation}
where the operator $A_{\cG_s,g}$ is given in \eqref{eq:Afg}, with $(\cG_s,g)$ in place of $(f,g)$, and $\mathscr{F}^{-1}$ denotes the inverse Fourier transform. It is well-known that (see for instance \cite[p.16]{Grafakos2})
\begin{align}
\label{eq: bessel potential piecewise}
    \cG_s(x) \eqsim \begin{cases}
        |x|^{s-d} & \text{ if }\  |x|\leq 2,\\
        e^{-\frac{|x|}{2}} & \text{ if } \ |x|>2.
    \end{cases}
    \end{align}
We now divide the proof into several steps.

\smallskip

\emph{Proof of \eqref{it:M_g_Sobolev1} in case $
s\in(\frac{d}{2},d)$, $ q\in (2, \infty)$, $\eta\in (2, q)$ and $\frac{s}{d} + \frac{1}{q}  \geq \frac{1}{\eta} + \frac{1}{2}$.} Without loss of generality, we can assume the latter estimate is an identity. 
Let $r\in (2,\infty)$ be given by 
$\frac{1}{q}+\frac{1}{2}=\frac{1}{\eta}+\frac{1}{r}$.
Then $r\in (2, \infty)$ due to $s\in (d/2, d)$. 
From \eqref{eq:identity_AGg_proof} and Theorem \ref{t:gammaYoung}, it follows that 
$$
\|(1-\Delta)^{-s/2}M_g\|_{\g(L^2(\R^d),L^q(\R^d))}
\lesssim_{\eta,q,d} \|\cG_s\|_{L^{r,\infty}}\|g\|_{L^\eta}.
$$
From \eqref{eq: bessel potential piecewise}, it follows that, 
\begin{align*}
|\{x\in\R^d\,:\, \cG_s(x)\geq \rho\}|
\lesssim \rho^{d/(s-d)} = \rho^{-r} \ \text{ for all }\ \rho>0.
\end{align*}
Hence $\cG_s\in L^{r,\infty}(\R^d)$. 

\smallskip

\emph{Step 2: Remaining cases in \eqref{it:M_g_Sobolev1}.}
Let $\eta=q\geq 2$ and $s>d/2$. Then from \eqref{eq: bessel potential piecewise} it follows that $\cG_s\in L^2(\bR^d)$. Hence, by Proposition \ref{prop:etaq},
\begin{align}
\label{eq: example Bessel pot estimate 2}
    \|(1-\Delta)^{-s/2} M_g\|_{\gamma(L^2(\R^d), L^\eta(\R^d))} \lesssim_{d,s,\eta} \|g\|_{L^\eta(\R^d)}.
\end{align}
Meanwhile, in the case $s>d/2$, $\eta=2$, $q\in [2,\infty)$ and $\frac{s}{d} + \frac{1}{q}  > 1$, we have $\cG_s\in L^q(\R^d)$. Thus, Proposition \ref{prop:eta2} gives
\begin{align}
    \|(1-\Delta)^{-s/2} M_g\|_{\gamma(L^2(\R^d), L^q(\R^d))} \lesssim_{d,s} \|g\|_{L^2(\R^d)}.
\end{align}

\smallskip

\emph{Step 3: Proof of \eqref{it:M_g_Sobolev2}}.
The assumptions in \eqref{it:M_g_Sobolev2} and  \eqref{eq:identityAfg} imply,
\begin{align}
\label{eq: cond tilde g Bessel}
\Big\|x\mapsto \int_{\R^d} |\cG_s(x-y)|^2  \wt{g}(y)\,\di y\Big\|_{L^{q/2}(\R^d)} 
 \lesssim \|\wt{ g}\|_{L^{\eta/2}(\R^d)}, \ \ \ \wt{g}\in L^{\eta/2}(\R^d) \ \text{with} \ \wt{g}\geq 0.
 \end{align}
First, let us note that, as in Remark \ref{rem:noneetaq}, the above inequality implies $\eta\leq q$. 

Let $B_\delta$ be the ball centered at the origin with radius $\delta$. Applying the above with $\wt{ g} = \one_{B_{\delta}}$ for $\delta \in (0,1)$ and using the behaviour of $\cG_s$ on $B_1$ in \eqref{eq: bessel potential piecewise}, 
 \begin{align}
 \nonumber
 \Big\| x\mapsto \int_{B_\delta} |x-y|^{2(s-d)}\one_{B_\delta}(y)\,\di y \Big\|_{L^{q/2}(\R^d)}
       &\lesssim
     \Big\| x\mapsto \int_{\R^d} |\cG_s(x-y)|^2\one_{B_\delta}(y)\,\di y \Big\|_{L^{q/2}(\R^d)}\\ &\stackrel{\eqref{eq: cond tilde g Bessel}}{\lesssim} \delta^{\frac{2d}{\eta}}.
      \label{eq: upper bound integral delta}
 \end{align}
 From the above, it follows that $s>\tfrac{d}{2}$, as otherwise the left-most side of the above inequality would diverge. To see the necessity of $\tfrac{s}{d}+\tfrac{1}{q}\geq \tfrac{1}{\eta}+\tfrac{1}{2}$, for $x\in \R^d$ such that $|x|<\delta$, we compute
\begin{align}\label{eq: integral estimate 1}
    \int_{B_\delta} |x-y|^{2(s-d)}\,\di y & \geq \int_{|z|<\delta -|x|} |z|^{2(s-d)}\,\di z \\ & \eqsim_d \int_0^{\delta-|x|} r^{2(s-d)+d-1}\,\di r
    \eqsim_{s,d} (\delta - |x|)^{2s-d}.
\end{align}
Next, we compute 
\begin{align}
    \int_{B_\delta}(\delta-|x|)^{\frac{q}{2}(2s-d)}\,\di x 
    &\eqsim_d \int_0^\delta (\delta - r)^{\frac{q}{2}(2s-d)}r^{d-1}\,\di  r\\
    \label{eq: beta function}
    & = \delta^{\frac{q}{2}(2s-d)+d}  \int_0^1 (1-r)^{\frac{q}{2}(2s-d)}r^{d-1}\, \di r \eqsim_{q,s,d} \delta^{\frac{q}{2}(2s-d)+d},
\end{align}
where the integral on the right-hand side in \eqref{eq: beta function} is a Beta function, evaluating to a constant independent of $\delta$.
Combining the latter with \eqref{eq: upper bound integral delta} and \eqref{eq: integral estimate 1}, we get
\begin{align}
    \delta^{2s-d+\frac{2d}{q}}\lesssim  
    \Big\|x\mapsto \int_{B_\delta} |x-y|^{2(s-d)}\one_{B_\delta(0)}(y)\,\di y \Big\|_{L^{q/2}(\R^d)}\lesssim \delta^{\frac{2d}{\eta}}.
\end{align}
The above, for $\delta\to 0$, implies $2s-d+\frac{2d}{q}\geq \tfrac{2d}{\eta}$, that is $\tfrac{s}{d}+\tfrac{1}{q}\geq \tfrac{1}{\eta}+\tfrac{1}{2}$ as desired.

\smallskip

Next, we prove the final assertion in \eqref{it:M_g_Sobolev3}.
Suppose that $\frac{s}{d}+\frac{1}{q}=\frac{1}{\eta} + \frac12$, then due to $s>d/2$ we see that $\eta<q$. 
Moreover, as $\eta\geq 1$, we have $\frac{s}{d}\leq \frac{1}{\eta} + \frac12\leq 1$ and thus $s\leq d$.
Now we suppose that $\eta=2$ and show that this leads to a contradiction. Indeed, as before, from \eqref{eq: cond tilde g Bessel} we get 
\begin{align}\label{eq:L1est}
\Big\|x\mapsto \int_{B_r} |\cG_s(x-y)|^2  \widetilde{g}(y) \,\di y\Big\|_{L^{q/2}(B_r)} 
 \lesssim \|\widetilde{g}\|_{L^{1}(B_r)},
 \end{align}
 for $r>0$.
Taking $ \widetilde{g} := \widetilde{g}_\delta:= |B_{\delta}|^{-1}\one_{B_{\delta}}$ with $\delta<r<1$ and letting $\delta\downarrow 0$ we obtain 
that (using that $\cG_s$ is smooth outside zero)
\begin{align*}
\int_{B_r} |\cG_s(x-y)|^2  \wt{g}_{\delta}(y) \,\di y\to |\cG_s(x)|^2 \ \ \text{for all $x\neq 0$.}
\end{align*}
Therefore, from Fatou's lemma and \eqref{eq:L1est} we obtain that 
\begin{align}
    \|\cG_s\|_{L^q(B_r)}^2 &= \big\||\cG_s|^2\big\|_{L^{q/2}(B_r)} \leq \lim_{\delta \downarrow 0} \Big\|x\mapsto \int_{B_r} |\cG_s(x-y)|^2  \wt{g}_{\delta}(y)\,\di y\Big\|_{L^{q/2}(B_r)} \lesssim 1.
\end{align}
Since $\cG_s(x)\sim |x|^{s-d}$ by \eqref{eq: bessel potential piecewise} 
for $x\to 0$, the previous bound implies the integrability of $|x|^{q(s-d)}$ near $0$, and therefore $(s-d)q>-d$, or equivalently $\frac{s}{d} -1 > -\frac{1}{q}$. 
Due to $\frac{s}{d}+\frac{1}{q}=\frac{1}{\eta} + \frac12$ we get $\tfrac{1}{\eta}>\tfrac{1}{2}$, which contradicts $\eta=2$.  
\end{proof}

It remains to prove Proposition \ref{prop:M_g_Sobolev_hom}.

\begin{proof}[Proof of Proposition \ref{prop:M_g_Sobolev_hom}]
We divide the proof into two steps.

\smallskip

\emph{Step 1: If \eqref{eq:conditions_parameters_homogeneous_gamma} holds, then $\|M_g\|_{\g(L^2(\R^d),\dot{H}^{-s,q}(\R^d))}\lesssim \|g\|_{L^\eta(\R^d)}$.} 
To begin, similarly to \eqref{eq:delta_Bessel_potential_kernel} it is enough to consider the operator $(-\Delta)^{-s/2} M_g$. Note that 
\begin{equation}\label{eq:RieszA}
(-\Delta)^{-s/2} M_g= A_{\cR_s,g} 
\end{equation}
where $\cR_s=(2\pi)^{s}\mathscr{F}^{-1}(|\cdot|^{-s})$ denotes the Riesz kernel. It is well-known that
$$
\cR_s(x)= C_{d,s} |x|^{s-d} \ \text{ for }\  x\in \R^d,
$$
see for instance \cite[p.9]{Grafakos2}.
In particular, $|\{x\in \R^d\,:\, \cR_s(x)\geq \rho\}|\eqsim \rho^{d/(s-d)}$ for $\rho>0$ and hence  $\cR_s\in L^{r,\infty}(\R^d)$
for $r=d/(d-s)$. Thus, from Theorem \ref{t:gammaYoung}, as before
$$
\|(-\Delta)^{-s/2} M_g\|_{\g(L^2(\R^d),L^q(\R^d))}
\lesssim \|\cR_s\|_{L^{r,\infty}(\R^d)}\|g\|_{L^\eta(\R^d)}
\lesssim \|g\|_{L^\eta(\R^d)}.
$$

\emph{Step 2: The estimate $\|M_g\|_{\g(L^2(\R^d),\dot{H}^{-s,q}(\R^d))}\lesssim \|g\|_{L^\eta(\R^d)}$ implies \eqref{eq:conditions_parameters_homogeneous_gamma}.}
Before we can repeat the arguments in Step 3 of Proposition \ref{prop:M_g_Sobolev}, we first need to derive $s<d$ before we can use that $(-\Delta)^{-s/2}$ can be written in terms of Riesz kernels. However, using a dilation argument for $g$ together with the homogeneity of $\dot{H}^{-s,q}(\R^d)$ one can check that $
\frac{s}{d}+\frac{1}{q}= \frac{1}{\eta}+\frac{1}{2}$. Since $\eta\geq 2$, this implies $s<d$. Thus as before, we can use \eqref{eq:RieszA}.

Arguing as in Step 3 of Proposition \ref{prop:M_g_Sobolev}, the assumptions and \eqref{eq:identityAfg} imply that
\begin{align}
\label{eq: cond tilde g Riesz}
\Big\|x\mapsto \int_{\R^d} |\cR_s(x-y)|^2  \wt{g}(y)\,\di y\Big\|_{L^{q/2}(\R^d)} 
 \lesssim \|\wt{ g}\|_{L^{\eta/2}(\R^d)}, \ \ \ \wt{g}\in L^{\eta/2}(\R^d) \ \text{with} \ \wt{g}\geq 0. 
 \end{align}
 Since the Riesz kernel has the same behaviour as the Bessel potential kernel $\mathcal{G}_s$ on $|x|\leq 2$, the estimate \eqref{eq: upper bound integral delta} holds also in the present situation. Hence, $s>\frac{d}{2}$ follows. Similarly $\eta>2$ by the argument below \eqref{eq:L1est}.
\end{proof}

\begin{remark}[Sharpness $\&$ weak Lebesgue space]
\label{r:necessity_of_weak_spaces}
From the proof of Proposition \ref{prop:M_g_Sobolev}\eqref{it:M_g_Sobolev1a} and Proposition \ref{prop:M_g_Sobolev_hom}, it is clear that the presence of the weak Lebesgue space on the right-hand side of the $\gamma$-Young's inequality \eqref{eq:gamma_young} is essential in the sharp case, i.e.\ in the case
$
\frac{s}{d} + \frac{1}{q}  = \frac{1}{\eta} + \frac{1}{2}.
$
\end{remark}

\section{Main results -- Sharp Sobolev embeddings for Gaussian series}
\label{s:main_result}
Here, we state and prove the main result of this manuscript, which generalizes Theorem \ref{thm:intro} to the case of orthonormal systems with (arbitrary) growth. To state it, we introduce some notation. Let 
$$\Sf=(f_n)_{n\geq 1}\subseteq L^2(\Domm)$$ be an orthonormal system of $L^2(\Domm)$.
Let $(e_n)_{n\geq 1}$ be a complete orthonormal basis of $L^2(\Domm)$, and $(\mu_n)_{n\geq 1}\subseteq \ell^0$ be a sequence. Consider the linear operator 
\begin{equation}
\label{eq:Tmu_def} 
R_\mu = \sum_{n\geq  1} \mu_n \, e_n \otimes  f_n,
\end{equation}
where $(e_n\otimes  f_n)(h)=(h,e_n)_{L^2(\Domm)} f_n$ for $h\in L^2(\Domm)$. 
Next, we introduce a space of sequences that encodes the possible growth of the orthonormal system $\Sf$ in the $L^\infty$-norm. 
For  $\Sf=(f_n)_{n\geq 1}$ as above, and $\zeta\in [1,\infty)$, we let
\begin{equation}
\begin{aligned}
\label{eq:ellzeta_F}
\ell^\zeta(\Sf):= 
\Big\{ \mu=(\mu_n)_{n\geq 1}\in \ell^0\,:\, \sum_{n\geq 1} |\mu_n|^\zeta \|f_n\|_{L^\infty(\cO)}^{2}<\infty \Big\}, \  \quad
\ell^\infty(\Sf):=\ell^\infty,
\end{aligned}
\end{equation}
endowed with the natural norms. In other words, for $\zeta$ finite, $\ell^\zeta(\Sf)$ denotes the space of sequences with weight $(\|f_n\|_{L^\infty(\cO)}^{2} )_{n\geq 1}$, while $\ell^\infty(\Sf)$ denotes the usual space of bounded sequences. In particular, the weight is independent of $\zeta$, and this is a consequence of Proposition \ref{prop: interpolation} below.

As we will discuss in Section \ref{s:weight_necessary}, considering the above weighted sequences is not only natural, but essential to capture scaling properties of SPDEs as commented in Subsection \ref{ss:scaling_intro}.

\smallskip

The following is the main result of this manuscript. As above, $M_g$ denotes the multiplication operator with a measurable function $g:\Domm\to \R$.

\begin{theorem}[Sobolev embeddings for Gaussian series -- Weighted sequences]
\label{thm:MgTmu delta}
Let $\Domm$ be an open set in $\R^d$. 
Let $q\in (1, \infty)$, $\eta\in (1, q)$, $\zeta\in [2, \infty]$
and $s\in(0,d)$ be such that 
\begin{align}\label{eq:conditionsmain delta}
\frac{s}{d} + \frac{1}{q}  \geq \frac{1}{\eta} + \frac{1}{2} - \frac{1}{\zeta}\qquad \text{ and }\qquad 
\frac{1}{\eta} - \frac{1}{\zeta}<\frac{1}{2}.
\end{align}
Then for each $g\in L^{\eta}(\mathcal{O})$ and $\mu\in \ell^\zeta(\Sf)$, the operator 
$M_g R_{\mu}:L^2(\mathcal{O})\to \wt{H}^{-s,q}(\mathcal{O})$ is $\gamma$-radonifying
and 
\begin{align}\label{eq:maingammabound}
\|M_g R_{\mu}\|_{\gamma(L^2(\mathcal{O}), \wt{H}^{-s,q}(\mathcal{O}))}\lesssim_{d,s,q,\eta,\zeta} \|\mu\|_{\ell^{\zeta}(\Sf)} \|g\|_{L^{\eta}(\mathcal{O})}.
\end{align} 
\end{theorem}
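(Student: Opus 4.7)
The plan, following Subsection \ref{ss:proof_strategy}, is to regard the assignment $(g,\mu)\mapsto M_g R_\mu$ as a bilinear operator
\[
B:L^{\eta}(\mathcal{O})\times\ell^{\zeta}(\Sf)\to\gamma(L^2(\mathcal{O}),H^{-s,q}(\mathcal{O}))
\]
and to reduce the full parameter range in \eqref{eq:conditionsmain delta} to the two endpoint sub-cases \eqref{it:into_interpolation_1} and \eqref{it:into_interpolation_2} by bilinear complex interpolation. Concretely, for $\zeta\in(2,\infty)$ I would set $\theta=2/\zeta\in(0,1)$ and pick endpoints $(s_0,q_0,\eta_0)$ (with $\zeta_0=\infty$, playing the role of \eqref{it:into_interpolation_2}) and $(s_1,q_1,\eta_1)$ (with $\zeta_1=2$, playing the role of \eqref{it:into_interpolation_1}) satisfying
\[
s=(1-\theta)s_0+\theta s_1,\qquad \tfrac{1}{q}=\tfrac{1-\theta}{q_0}+\tfrac{\theta}{q_1},\qquad \tfrac{1}{\eta}=\tfrac{1-\theta}{\eta_0}+\tfrac{\theta}{\eta_1}.
\]
The strict condition $\tfrac{1}{\eta}-\tfrac{1}{\zeta}<\tfrac{1}{2}$ is precisely what secures $\eta_0>2$, and the first condition in \eqref{eq:conditionsmain delta}, combined with $\eta<q$, forces $s>d/2-d/\zeta$, which is exactly what allows one to choose $s_0\in(d/2,d)$ and $s_1\in(0,d)$ with $s=(1-\theta)s_0+\theta s_1$. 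The degenerate cases $\zeta=2$ and $\zeta=\infty$ coincide with the endpoints themselves (for $\zeta=\infty$ the conditions automatically force $s\in(d/2,d)$ and $q,\eta\in(2,\infty)$, so no interpolation is needed).

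At the $\zeta_1=2$ endpoint I would argue directly via the square-function identification \eqref{eq: square fct char gamma}. For any orthonormal basis $(e_n)$ of $L^2(\mathcal{O})$, one has $M_g R_\mu e_n=g\,\mu_n f_n$, and therefore
\[
\|M_g R_\mu\|_{\gamma(L^2(\mathcal{O}),L^{\eta_1}(\mathcal{O}))}\eqsim_{\eta_1}\Bigl\||g|\bigl(\textstyle\sum_{n\geq 1}|\mu_n|^2|f_n|^2\bigr)^{1/2}\Bigr\|_{L^{\eta_1}(\mathcal{O})}\leq \|\mu\|_{\ell^2(\Sf)}\|g\|_{L^{\eta_1}(\mathcal{O})},
\]
where the last inequality uses the pointwise bound $\sum_n|\mu_n|^2|f_n(x)|^2\leq\|\mu\|_{\ell^2(\Sf)}^2$. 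Composing with the Sobolev embedding $L^{\eta_1}(\mathcal{O})\embed H^{-s_1,q_1}(\mathcal{O})$ (valid via zero-extension and restriction whenever $\eta_1\leq q_1$ and $s_1/d+1/q_1\geq 1/\eta_1$) and invoking the ideal property \eqref{eq:gamma_ideal} delivers the endpoint bound into $H^{-s_1,q_1}(\mathcal{O})$. At the $\zeta_0=\infty$ endpoint, orthonormality of $(e_n)$ and of $(f_n)$ shows $R_\mu\in\mathcal{L}(L^2(\mathcal{O}))$ with $\|R_\mu\|\leq\|\mu\|_{\ell^\infty}$, so \eqref{eq:gamma_ideal} reduces the bound to
\[
\|M_g\|_{\gamma(L^2(\mathcal{O}),H^{-s_0,q_0}(\mathcal{O}))}\lesssim\|g\|_{L^{\eta_0}(\mathcal{O})},
\]
which, after extending $g$ by zero to $\R^d$ and using the quotient-norm definition of $H^{-s_0,q_0}(\mathcal{O})$, is exactly Proposition \ref{prop:M_g_Sobolev}\eqref{it:M_g_Sobolev1a}.

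The step I expect to be the most delicate is the bilinear complex interpolation itself. Applied to $B$, Proposition \ref{prop: interpolation} requires three interpolation identities: $[L^{\eta_0}(\mathcal{O}),L^{\eta_1}(\mathcal{O})]_\theta=L^\eta(\mathcal{O})$, which is standard; $[\ell^\infty,\ell^2(\Sf)]_\theta=\ell^\zeta(\Sf)$, which is weighted $\ell^p$-interpolation with weight $(\|f_n\|_{L^\infty(\mathcal{O})}^2)_{n\geq 1}$, producing precisely the exponent $\zeta=2/\theta$ \emph{and} the same weight at the interpolated scale (thus justifying the very definition \eqref{eq:ellzeta_F}); and $[H^{-s_0,q_0}(\mathcal{O}),H^{-s_1,q_1}(\mathcal{O})]_\theta=H^{-s,q}(\mathcal{O})$. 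The genuinely non-trivial ingredient is compatibility with the $\gamma$-functor, namely the continuous embedding $[\gamma(L^2,X_0),\gamma(L^2,X_1)]_\theta\embed\gamma(L^2,[X_0,X_1]_\theta)$, which is available because the target Bessel potential spaces have finite cotype. Assembling these ingredients through Proposition \ref{prop: interpolation} yields \eqref{eq:maingammabound}.
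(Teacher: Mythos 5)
Your proposal is correct and follows essentially the same route as the paper: the $\zeta=2$ endpoint via the square-function characterization and the Sobolev embedding $L^{\eta}\embed H^{-s,q}$ (Lemma \ref{lem:l2 2 delta}), the $\zeta=\infty$ endpoint via the ideal property and Proposition \ref{prop:M_g_Sobolev}\eqref{it:M_g_Sobolev1a}, and bilinear complex interpolation through Proposition \ref{prop: interpolation} with the weighted sequence-space and $\gamma$-space interpolation identities you cite. The only part you compress relative to the paper is the elementary but tedious verification that admissible endpoint parameters $(s_i,q_i,\eta_i)$ actually exist for every $(s,q,\eta,\zeta)$ satisfying \eqref{eq:conditionsmain delta}; you correctly identify that the second condition in \eqref{eq:conditionsmain delta} is what makes this feasible, and the paper's explicit max/min analysis confirms it.
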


A version of Theorem \ref{thm:MgTmu delta} also holds for homogeneous Sobolev spaces.

\begin{theorem}[Sobolev embeddings for Gaussian series -- Weighted sequences $\&$ Homogeneous spaces]
\label{thm:MgTmu delta_hom}
Suppose that $\Domm=\R^d$. 
Let $q\in (1, \infty)$, $\eta\in (1, q)$, $\zeta\in [2, \infty]$ and $s\in(0,d)$ be such that 
\begin{align}\label{eq:conditionsmain delta_hom}
\frac{s}{d} + \frac{1}{q}  = \frac{1}{\eta} + \frac{1}{2} - \frac{1}{\zeta}\qquad \text{ and }\qquad 
\frac{1}{\eta} - \frac{1}{\zeta}<\frac{1}{2}.
\end{align}
Then for each $g\in L^{\eta}(\R^d)$ and $\mu\in \ell^\zeta(\Sf)$, the operator 
$M_g R_{\mu}:L^2(\R^d)\to \dot{H}^{-s,q}(\R^d)$ is $\gamma$-radonifying and 
\begin{align}\label{eq:maingammabound_hom}
\|M_g R_{\mu}\|_{\gamma(L^2(\R^d),\dot{H}^{-s,q}(\R^d))}\lesssim_{d,s,q,\eta,\zeta}
 \|\mu\|_{\ell^{\zeta}(\Sf)} \|g\|_{L^{\eta}(\R^d)}.
\end{align} 
\end{theorem}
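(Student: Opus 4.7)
The proof would parallel the bilinear interpolation strategy sketched in Subsection~\ref{ss:proof_strategy} for the inhomogeneous Theorem~\ref{thm:MgTmu delta}, but specialized to the critical surface $\frac{s}{d}+\frac{1}{q}=\frac{1}{\eta}+\frac{1}{2}-\frac{1}{\zeta}$ along which homogeneous Sobolev spaces are scale-invariant. The plan has three steps: settle the two extreme cases $\zeta=2$ and $\zeta=\infty$ directly, and then apply the bilinear interpolation result (Proposition~\ref{prop: interpolation}) to the map $(g,\mu)\mapsto M_g R_{\mu}$.

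For the endpoint $\zeta=\infty$, since $\ell^\infty(\Sf)=\ell^\infty$, orthonormality of $(f_n)_{n\geq 1}$ in $L^2(\R^d)$ immediately yields $\|R_\mu\|_{\calL(L^2(\R^d))}\leq \|\mu\|_{\ell^\infty}$. Combining the ideal property \eqref{eq:gamma_ideal} with Proposition~\ref{prop:M_g_Sobolev_hom} gives \eqref{eq:maingammabound_hom} in the range $s\in(d/2,d)$, $\eta\in(2,q)$, $\frac{s}{d}+\frac{1}{q}=\frac{1}{\eta}+\frac{1}{2}$. For the endpoint $\zeta=2$, pick a complete orthonormal basis $(e_n)$ of $L^2(\R^d)$ so that $R_\mu e_n=\mu_n f_n$; then the square-function identification \eqref{eq: square fct char gamma} gives
\begin{align*}
\|M_g R_{\mu}\|_{\gamma(L^2(\R^d),L^\eta(\R^d))}
\eqsim_\eta\Big\|\,|g|\Big(\sum_{n\geq 1}|\mu_n|^2|f_n|^2\Big)^{\!1/2}\Big\|_{L^\eta(\R^d)}
\leq \|g\|_{L^\eta(\R^d)}\|\mu\|_{\ell^2(\Sf)},
\end{align*}
using $|f_n(x)|\leq \|f_n\|_{L^\infty(\R^d)}$ pointwise a.e. The homogeneous Sobolev embedding $L^\eta(\R^d)\hookrightarrow \dot{H}^{-s,q}(\R^d)$, which holds precisely when $\frac{s}{d}+\frac{1}{q}=\frac{1}{\eta}$, $s\in(0,d)$, $q\in(1,\infty)$, $\eta\in(1,q)$, closes this case.

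For the interpolation step, I would view $\ell^\zeta(\Sf)$ as $L^\zeta(\N,\nu)$ with counting measure $\nu(\{n\})=\|f_n\|_{L^\infty(\R^d)}^2$; standard complex interpolation of $L^\zeta$ over the same measure space then gives $[\ell^2(\Sf),\ell^\infty]_\theta=\ell^{\zeta_\theta}(\Sf)$ with $\tfrac{1}{\zeta_\theta}=\tfrac{1-\theta}{2}$. Combined with the standard identities $[L^{\eta_0},L^{\eta_1}]_\theta=L^{\eta_\theta}$ and $[\dot{H}^{-s_0,q_0},\dot{H}^{-s_1,q_1}]_\theta=\dot{H}^{-s_\theta,q_\theta}$, together with the compatibility of $\gamma(L^2,\cdot)$-ranges under interpolation of $L^q$-type targets (valid since $\dot{H}^{-s,q}\cong L^q$ via the Riesz potential, and $L^q$ for $q\in[2,\infty)$ has type~$2$ and finite cotype), Proposition~\ref{prop: interpolation} propagates the endpoint bounds to any intermediate $\theta\in(0,1)$. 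A direct check shows that the critical equality in \eqref{eq:conditionsmain delta_hom} is affine in the reciprocals $\tfrac{1}{\eta},\tfrac{1}{q},\tfrac{s}{d},\tfrac{1}{\zeta}$ and hence preserved along the interpolation, while the side condition $\tfrac{1}{\eta}-\tfrac{1}{\zeta}<\tfrac{1}{2}$ is exactly what guarantees that admissible endpoint exponents (in particular $\eta_0>1$ at $\zeta_0=2$ and $\eta_1>2$ at $\zeta_1=\infty$) can be selected.

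The principal obstacle will be the interpolation of the $\gamma$-ranges $\gamma(L^2,\dot{H}^{-s,q}(\R^d))$: unlike their inhomogeneous counterparts, these spaces are \emph{a priori} defined only modulo polynomials, so one must first invoke the realization of $\dot{H}^{-s,q}(\R^d)$ as a subspace of $\S'(\R^d)$ for $s\geq 0$ (noted in Subsection~\ref{subs:func}) before applying complex interpolation. Once this realization is in place, the Riesz potential $(-\Delta)^{-s/2}$ identifies $\dot{H}^{-s,q}(\R^d)$ isometrically with $L^q(\R^d)$, turning the interpolation of the $\gamma$-ranges into the classical $\gamma$-space interpolation for $L^q$-targets. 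Beyond this bookkeeping, no analytical input is required that is not already contained in Theorem~\ref{t:gammaYoung} and Proposition~\ref{prop:M_g_Sobolev_hom}.
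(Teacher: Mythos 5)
Your proposal is correct and follows essentially the same route as the paper: the endpoint $\zeta=\infty$ via the ideal property combined with Proposition~\ref{prop:M_g_Sobolev_hom}, the endpoint $\zeta=2$ via the square-function characterization and the sharp homogeneous Sobolev embedding $L^\eta(\R^d)\hookrightarrow\dot H^{-s,q}(\R^d)$ (the homogeneous sharp case of Lemma~\ref{lem:l2 2 delta}), and bilinear complex interpolation exactly as in Proposition~\ref{prop: interpolation}. The parameter bookkeeping and the remark on realizing $\dot H^{-s,q}(\R^d)$ inside $\S'(\R^d)$ before interpolating match what the paper does.
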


We now collect some observations regarding the above results. We will mainly comment on Theorem \ref{thm:MgTmu delta}, as similar comments extend almost verbatim to Theorem \ref{thm:MgTmu delta_hom}.  
From \eqref{eq:def_gamma_norm}, the estimate \eqref{eq:maingammabound} can be equivalently reformulated by using Gaussian random sums as follows. Let $(\g_n)_{n \geq 1}$ be independent standard Gaussian random variables $(\g_n)_{n\geq 1}$ on a probability space $(\O,\mathcal{A},\P)$. Under the assumptions of Theorem \ref{thm:MgTmu delta}, the sum $g\sum_{n\geq 1} \mu_n \g_n  f_n$ converges in $L^2(\O; H^{-s,q}(\Domm))$ and 
\begin{equation*}
\Big(\E\Big\|g \sum_{n\geq 1}  \mu_n \g_n f_n \Big\|_{H^{-s,q}(\Domm)}^2\Big)^{1/2}
\lesssim_{d,s,q,\eta,\zeta} \|\mu\|_{\ell^{\zeta}(\Sf)} \|g\|_{L^{\eta}(\mathcal{O})}.
\end{equation*}
In particular, Theorem \ref{thm:MgTmu delta} does not depend on the choice of the orthonormal basis $(e_n)_{n\geq 1}$.

As in Subsection \ref{ss:scaling_intro} (see also the comments below Propositions \ref{prop:M_g_Sobolev} and \ref{prop:M_g_Sobolev_hom} for a similar situation),
the first condition in \eqref{eq:conditionsmain delta} (with equality) and \eqref{eq:conditionsmain delta_hom}
are \emph{sharp} and they captures the scaling of the estimates \eqref{eq:maingammabound} and \eqref{eq:maingammabound_hom}.
Therefore, all the other cases considered in Theorem \ref{thm:MgTmu delta} will be referred to as \emph{non-sharp} cases, as there is a loss of smoothness and a break of the scaling. For completeness, we discuss further non-sharp cases in Remark \ref{rem:zetainfty2}.

\smallskip

Next, we discuss the necessity of the first condition in \eqref{eq:conditionsmain delta_hom}. Interestingly, the latter proof only needs the estimate \eqref{eq:maingammabound_hom} to hold for rank one operators, and its proof extends also to the inhomogeneous case \eqref{eq:maingammabound}, therefore providing the optimality of the first condition in \eqref{eq:conditionsmain delta}.

In the following, we use that, if $\S_f$ consists of a single element $f\in L^2(\cO)\cap L^\infty(\cO)$ with $\|f\|_{L^2(\cO)}=1$, then $\ell^\zeta(\S_f)$ is isomorphic to $\R$ and 
$$
\|\mu\|_{\ell^\zeta(\S_f)}= |\mu| \|f\|_{L^{\infty}(\cO)}^{2/\zeta} \ \ \text{ for all } \ \mu\in \ell^\zeta(\S_f).
$$

\begin{proposition}[Necessity of the first condition in \eqref{eq:conditionsmain delta} and \eqref{eq:conditionsmain delta_hom} -- rank one operators]
\label{p:necessityLinfty_hom}
Let $\Domm$ be an open set, and 
$q\in (1, \infty)$, $\eta\in (1, \infty)$, $\zeta\in [2, \infty]$, and $s\in(0,d)$.
Suppose that there is a constant $C>0$ such that for all $\mu> 0$, $f,g,e\in C^\infty_{{\rm c}}(\Domm)$ satisfying $\|e\|_{L^2(\Domm)}=\|f\|_{L^2(\Domm)}=1$, one has
\begin{equation}
\label{eq:estimate_required_necessity_II}
\|M_g R_\mu \|_{\gamma(L^2(\Domm),H^{-s,q}(\Domm))}\leq C \|\mu\|_{\ell^{\zeta}(\Sf)} \|g\|_{L^{\eta}(\Domm)},
\end{equation}
where $R_\mu= \mu\, e\otimes f$ and $\S_f =\{f\}$, then 
$
\frac{s}{d} + \frac{1}{q}  \geq \frac{1}{\eta} + \frac{1}{2} - \frac{1}{\zeta}.
$

Finally, if $\Domm=\R^d$, then \eqref{eq:estimate_required_necessity_II} with $H^{-s,q}(\R^d)$ replaced by $\dot{H}^{-s,q}(\R^d)$ and $g,R_\mu$ as above implies that
$
\frac{s}{d} + \frac{1}{q}  = \frac{1}{\eta} + \frac{1}{2} - \frac{1}{\zeta}.
$
\end{proposition}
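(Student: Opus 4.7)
For the rank-one choice $R_\mu = \mu\,e\otimes h$, each $f\in L^2(\Domm)$ satisfies $M_g R_\mu f = \mu (f,e)_{L^2(\Domm)}\,gh$, so $M_g R_\mu$ is the rank-one operator $\mu\,(gh)\otimes e$ from $L^2(\Domm)$ to the target $X$. By the standard identity $\|y\otimes x\|_{\gamma(H,X)} = \|x\|_H\|y\|_X$ for rank-one maps (immediate from \eqref{eq:def_gamma_norm}), one has $\|M_g R_\mu\|_{\gamma(L^2(\Domm),X)} = |\mu|\,\|gh\|_X$; and reading \eqref{eq:ellzeta_F} on the one-term sequence $\mu$ attached to $f_1=h\in \Sf$ yields $\|\mu\|_{\ell^{\zeta}(\Sf)} = |\mu|\,\|h\|_{L^\infty(\Domm)}^{2/\zeta}$ (with the convention $2/\infty = 0$). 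After cancelling $|\mu|$, \eqref{eq:estimate_required_necessity_II} is therefore equivalent to the scalar bound
\begin{equation}\label{eq:planred}
\|gh\|_X \leq C\,\|h\|_{L^\infty(\Domm)}^{2/\zeta}\,\|g\|_{L^{\eta}(\Domm)}\qquad \forall g,h\in C^\infty(\Domm),\ \|h\|_{L^2(\Domm)}=1.
\end{equation}

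\textbf{Scaling setup.}
Fix $x_0\in \Domm$ and $r>0$ with $B(x_0,r)\subset \Domm$, and pick once and for all $g_0,h_0\in C^\infty_c(B(0,r))$ with $\|h_0\|_{L^2}=1$, $g_0 h_0\not\equiv 0$, and $\int g_0h_0\,\di x\neq 0$. For $\lambda\geq 1$ set
\begin{equation*}
h_\lambda(x):=\lambda^{d/2}\,h_0(\lambda(x-x_0)),\qquad g_\lambda(x):=g_0(\lambda(x-x_0)),
\end{equation*}
which are smooth and compactly supported in $B(x_0,r/\lambda)\subset \Domm$ with $\|h_\lambda\|_{L^2}=1$. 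A direct change of variables yields $\|g_\lambda\|_{L^\eta(\Domm)}=\lambda^{-d/\eta}\|g_0\|_{L^\eta}$ and $\|h_\lambda\|_{L^\infty(\Domm)}^{2/\zeta}=\lambda^{d/\zeta}\|h_0\|_{L^\infty}^{2/\zeta}$, so inserting $(g_\lambda,h_\lambda)$ into \eqref{eq:planred} the right-hand side scales precisely as $\lambda^{d/\zeta-d/\eta}$.

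\textbf{Homogeneous case.}
For $\Domm=\R^d$ and $X=\dot H^{-s,q}(\R^d)$, the exact dilation identity $\|f(\lambda\,\cdot\,)\|_{\dot H^{-s,q}(\R^d)}=\lambda^{-s-d/q}\|f\|_{\dot H^{-s,q}(\R^d)}$ gives $\|g_\lambda h_\lambda\|_{\dot H^{-s,q}}=\lambda^{d/2-s-d/q}\|g_0 h_0\|_{\dot H^{-s,q}}$, with $\|g_0 h_0\|_{\dot H^{-s,q}}\in (0,\infty)$ (generically nonzero, else replace $h_0$ by a modulation). Requiring \eqref{eq:planred} to hold for both $\lambda\to 0$ and $\lambda\to\infty$ forces the two scaling exponents to coincide, i.e.\ $d/2-s-d/q=d/\zeta-d/\eta$, which is the claimed equality $\frac{s}{d}+\frac{1}{q}=\frac{1}{\eta}+\frac{1}{2}-\frac{1}{\zeta}$.

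\textbf{Inhomogeneous case and main obstacle.}
For $X=H^{-s,q}(\Domm)$ there is no exact dilation identity, and the plan is to extract a sharp lower bound on $\|g_\lambda h_\lambda\|_{H^{-s,q}(\Domm)}$ as $\lambda\to\infty$ by pairing against a rescaled test function. With $q'=q/(q-1)$, pick $\phi\in C^\infty_c(B(0,r))$ with $\int g_0h_0\phi\,\di x\neq 0$ and set $\phi_\lambda(x):=\lambda^{d/q'-s}\phi(\lambda(x-x_0))\in C^\infty_c(\Domm)$. A direct calculation yields the pairing
\begin{equation*}
\langle g_\lambda h_\lambda,\phi_\lambda\rangle_{\Domm}=\lambda^{d/2-s-d/q}\int g_0h_0\phi\,\di x,
\end{equation*}
and since $\widehat{\phi_\lambda}(\xi)$ is concentrated at frequencies $|\xi|\sim \lambda$, the Fourier-side comparison $(1+|\xi|^2)^{s/2}\eqsim |\xi|^s$ on its effective support yields, for $\lambda\geq \lambda_0$, $\|\phi_\lambda\|_{H^{s,q'}(\R^d)}\eqsim \|\phi_\lambda\|_{\dot H^{s,q'}(\R^d)}=\|\phi\|_{\dot H^{s,q'}(\R^d)}\lesssim 1$. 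Then, using that $g_\lambda h_\lambda$ and $\phi_\lambda$ are compactly supported in $\Domm$ together with the trivial bound $|\langle f,\phi_\lambda\rangle_{\Domm}|\leq \|f\|_{H^{-s,q}(\Domm)}\|\phi_\lambda\|_{H^{s,q'}(\R^d)}$ (valid for any extension used in the quotient definition of $H^{-s,q}(\Domm)$), one obtains $\|g_\lambda h_\lambda\|_{H^{-s,q}(\Domm)}\gtrsim \lambda^{d/2-s-d/q}$. Substituting into \eqref{eq:planred} and letting $\lambda\to\infty$ forces $d/2-s-d/q\leq d/\zeta-d/\eta$, i.e.\ the desired inequality. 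The main obstacle is the Fourier-analytic verification that $\|\phi_\lambda\|_{H^{s,q'}(\R^d)}\lesssim 1$ uniformly in $\lambda$ large, which requires a careful high/low-frequency split of $\widehat{\phi_\lambda}$ to identify $H^{s,q'}$ with its homogeneous counterpart on high-frequency-localized functions.
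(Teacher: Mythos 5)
Your proposal is correct and follows essentially the same route as the paper: reduce the rank-one $\gamma$-bound to the scalar inequality $\|gh\|_{X}\lesssim \|h\|_{L^\infty}^{2/\zeta}\|g\|_{L^\eta}$ for compactly supported $g,h$, then scale; the paper uses dyadic dilations $2^n$ and invokes the one-sided scaling of $H^{-s,q}(\R^d)$ via duality with $H^{s,q'}(\R^d)$, which is exactly the pairing argument you carry out explicitly.

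The one step you flag as a ``main obstacle'' is in fact routine and needs no high/low-frequency splitting: since $s>0$, one has $\|f\|_{H^{s,q'}(\R^d)}\eqsim \|f\|_{L^{q'}(\R^d)}+\|f\|_{\dot H^{s,q'}(\R^d)}$ (the multipliers $(1+4\pi^2|\xi|^2)^{s/2}(1+(2\pi|\xi|)^{s})^{-1}$ and its reciprocal are Mikhlin), and with your normalization $\|\phi_\lambda\|_{L^{q'}}=\lambda^{-s}\|\phi\|_{L^{q'}}$ while $\|\phi_\lambda\|_{\dot H^{s,q'}}=\|\phi\|_{\dot H^{s,q'}}$, so $\|\phi_\lambda\|_{H^{s,q'}}\lesssim 1$ for $\lambda\geq 1$. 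Note also that your heuristic ``$\widehat{\phi_\lambda}$ is concentrated at $|\xi|\sim\lambda$'' is not accurate ($\widehat{\phi_\lambda}$ lives at $|\xi|\lesssim\lambda$, including low frequencies), but the bound above does not need it.
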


As $\|M_g R_\mu\|_{\gamma(L^2(\Domm),H^{-s,q}(\Domm))}=\|g f\|_{H^{-s,q}(\cO)}$ by \eqref{eq:def_gamma_norm}, it follows from \eqref{eq:restriction_properties_of_tilde_spaces} that \eqref{eq:estimate_required_necessity_II} is weaker than assuming the estimate \eqref{eq:estimate_required_necessity_II} with $H^{-s,q}(\cO)$ replaced by $\wt{H}^{-s,q}(\cO)$.
In Proposition \ref{p:necessityweirdcond} we will show that even when restricted to the preiodic case the second condition in \eqref{eq:conditionsmain delta} is necessary.
The second condition in \eqref{eq:conditionsmain delta} does not seem to provide a limitation in applications to \emph{nonlinear} SPDEs \cite{AGVlocal}.

\smallskip

The method of Theorem \ref{thm:MgTmu delta} is applicable to other  situations. We will see two of these later on. A general framework can be found in Subsection \ref{ss:Bird} with an application to random fields. A further application to the case where we no longer need $f_n\in L^\infty(\cO)$ can be found in Section \ref{s:weight_necessary}.

\smallskip

This section is organized as follows. In Subsection \ref{ss:endpoint_case_main_result}, we prove two special cases of Theorem \ref{thm:MgTmu delta_hom} in which we consider the limiting $\zeta=2$ and $\zeta=\infty$, respectively. The latter cases will serve in Subsection \ref{ss:proof_generalized_growth} as endpoints for an interpolation argument leading to Theorems \ref{thm:MgTmu delta} and \ref{thm:MgTmu delta_hom}.

\subsection{Two endpoints of Theorem \ref{thm:MgTmu delta}}
\label{ss:endpoint_case_main_result}
We begin by proving Theorem \ref{thm:MgTmu delta} in the case $\zeta=2$.
Below, $\Domm$ is an open set in $\R^d$.

\begin{lemma}
\label{lem:l2 2 delta}
Suppose that
\begin{align}
\label{eq: bound l2 2 delta}
s\geq 0, \qquad q\in (1, \infty), \qquad   \frac{s}{d} + \frac{1}{q} \geq \frac{1}{\eta}, \qquad  \eta\in (1, q]. 
\end{align}
Then for each $g\in L^{\eta}(\mathcal{O})$ and $R\in \calL(L^2(\cO), L^\infty(\cO))$, the operator $M_g R:L^2(\mathcal{O})\to \wt{H}^{-s,q}(\mathcal{O})$ is $\gamma$-radonifying and 
\begin{align}\label{eq:MgRLinfty}
\|M_g R\|_{\gamma(L^2(\mathcal{O}), \wt{H}^{-s,q}(\mathcal{O}))}\lesssim_{d,s,q,\eta}
\|g\|_{L^\eta(\mathcal{O})} \|R\|_{\calL(L^2(\cO), L^\infty(\cO))}.
\end{align}
\end{lemma}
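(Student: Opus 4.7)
The plan is to reduce the statement to a bound in $\gamma(L^2(\cO), L^\eta(\cO))$ and then transfer it to $\gamma(L^2(\cO), H^{-s,q}(\cO))$ via a Sobolev embedding and the ideal property \eqref{eq:gamma_ideal}. Concretely, the conditions in \eqref{eq: bound l2 2 delta} are precisely those that guarantee the Sobolev embedding $L^\eta(\cO) \hookrightarrow H^{-s,q}(\cO)$ (recall that $s\geq 0$, $\eta\leq q$, and $-d/\eta \geq -s - d/q$). So it suffices to establish
\begin{equation*}
\|M_g R\|_{\gamma(L^2(\cO), L^\eta(\cO))} \lesssim_\eta \|g\|_{L^\eta(\cO)}\,\|R\|_{\cL(L^2(\cO), L^\infty(\cO))},
\end{equation*}
and then compose $M_g R \colon L^2(\cO) \to L^\eta(\cO)$ with the bounded embedding $L^\eta(\cO) \hookrightarrow H^{-s,q}(\cO)$.

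For the $L^\eta$-bound, I would use the square function characterization \eqref{eq: square fct char gamma}: fixing any orthonormal basis $(h_n)_{n\geq 1}$ of $L^2(\cO)$,
\begin{equation*}
\|M_g R\|_{\gamma(L^2(\cO), L^\eta(\cO))} \eqsim_\eta \Big\||g|\,\Big(\sum_{n\geq 1}|Rh_n|^2\Big)^{1/2}\Big\|_{L^\eta(\cO)}.
\end{equation*}
The key observation is that since $R$ maps $L^2(\cO)$ boundedly into $L^\infty(\cO)$, for a.e.\ $x\in \cO$ the pointwise evaluation $h\mapsto (Rh)(x)$ is a bounded linear functional on $L^2(\cO)$ with norm at most $\|R\|_{\cL(L^2(\cO), L^\infty(\cO))}$. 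Applying Parseval's identity to this functional pointwise yields
\begin{equation*}
\sum_{n\geq 1} |Rh_n(x)|^2 \leq \|R\|_{\cL(L^2(\cO), L^\infty(\cO))}^2 \quad\text{for a.e.\ } x\in \cO,
\end{equation*}
which, after multiplying by $|g|$ and taking $L^\eta$-norms, gives the required bound.

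The proof does not present a real obstacle; the only delicate point is verifying the Sobolev embedding $L^\eta(\cO) \hookrightarrow H^{-s,q}(\cO)$ for a general open set $\cO$, rather than for $\R^d$. This follows from the standard Euclidean embedding together with the quotient-norm definition of $H^{-s,q}(\cO)$ recalled in Subsection \ref{subs:func}: one extends a function in $L^\eta(\cO)$ by zero to $L^\eta(\R^d)$, applies $L^\eta(\R^d)\hookrightarrow H^{-s,q}(\R^d)$, and restricts back to $\cO$, whose norm controls that of the restriction by definition.
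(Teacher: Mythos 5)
Your proposal is correct and follows essentially the same route as the paper: reduce to $\gamma(L^2(\cO),L^\eta(\cO))$ via the Sobolev embedding $L^\eta(\cO)\embed H^{-s,q}(\cO)$ and the ideal property, then apply the square function characterization \eqref{eq: square fct char gamma} and bound $\bigl\|\bigl(\sum_{n}|Rh_n|^2\bigr)^{1/2}\bigr\|_{L^\infty(\cO)}$ by $\|R\|_{\calL(L^2(\cO),L^\infty(\cO))}$. The only difference is that the paper cites the proof of \cite[Corollary 9.3.3]{HNVW2} for this last bound, whereas you spell out the pointwise Parseval argument (which is fine, modulo the routine remark that the exceptional null set can be chosen uniformly over the countable orthonormal basis).
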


If $R$ is in the form \eqref{eq:Tmu_def} (i.e.\ $R=R_\mu$ for some $\mu\in \ell^0$), then 
\[\|R_{\mu}\|_{\calL(L^2(\cO), L^\infty(\cO))} = \Big\|\Big(\sum_{n\geq 1} |\mu_n|^2 |f_n|^2\Big)^{1/2} \Big\|_{L^\infty(\cO)}\leq \|\mu\|_{\ell^2(\Sf)},\]
where we recall that the latter norm was defined in \eqref{eq:ellzeta_F}. As a consequence of \eqref{eq:MgRLinfty}, we obtain 
\begin{equation}\label{eq:MgRLinfty2}
\|M_g R_{\mu}\|_{\gamma(L^2(\mathcal{O}),H^{-s,q}(\mathcal{O}))}\lesssim_{d,s,q,\eta}
\|g\|_{L^\eta(\mathcal{O})} \|\mu\|_{\ell^2(\Sf)}.
\end{equation}
The estimate \eqref{eq:MgRLinfty2} corresponds to the  endpoint case $\zeta=2$ of Theorem \ref{thm:MgTmu delta}.

\begin{proof}
As $- \frac{d}{\eta}\geq -s -\frac{d}{q}$, by extending by zero the Sobolev embedding on $\R^d$ gives $L^\eta(\Domm)\embed \wt{H}^{-s,q}(\Domm)$. This and the square function characterizations \eqref{eq: square fct char gamma} of $\g(L^2(\Domm),L^q(\Domm))$ imply 
\begin{align*}
  \|M_g R\|_{\gamma(L^2(\mathcal{O}),\wt{H}^{-s,q}(\mathcal{O}))} & \lesssim \|M_g R\|_{\gamma(L^2(\mathcal{O}),L^\eta(\mathcal{O}))} \\ 
  & \eqsim_q  \Big\||g| \Big(\sum_{n\geq 1} |R e_n |^2\Big)^{1/2}\Big\|_{L^\eta(\mathcal{O})}\\
  & \leq  \|g\|_{L^\eta(\Domm)} \Big\| \Big(\sum_{n\geq 1} |Re_n|^2\Big)^{1/2}\Big\|_{L^\infty(\mathcal{O})}\\
 & \leq  \|g\|_{L^\eta(\Domm)}  \|R\|_{\calL(L^2(\cO), L^\infty(\cO))},
\end{align*}
where the last estimate follows from the proof of \cite[Corollary 9.3.3]{HNVW2}.
\end{proof}

\smallskip

Now, we turn our attention to the case $\zeta=\infty$ of Theorem \ref{thm:MgTmu delta}. In this case, we need the results of the previous section, namely Proposition \ref{prop:M_g_Sobolev}.
Interestingly, in this situation, we do not need any structure from the operator $R$.

\begin{lemma}
\label{lem:zetainfty}
Suppose that
\begin{align}
\label{eq:boundHsqsharp}
s\in \Big(\frac{d}{2},d\Big), 
\qquad 2<\eta<q<\infty, \qquad   \frac{s}{d} + \frac{1}{q} \geq \frac{1}{\eta}+\frac{1}{2}. 
\end{align}
Then for each $g\in L^{\eta}(\mathcal{O})$ and $R\in \calL(L^2(\mathcal{O}))$, the operator $M_g R:L^2(\mathcal{O})\to H^{-s,q}(\mathcal{O})$ is $\gamma$-radonifying and 
\begin{equation}
\label{eq:gamma_L2_Leta_endpoint}
\|M_g R\|_{\gamma(L^2(\mathcal{O}), \wt{H}^{-s,q}(\mathcal{O}))}\lesssim_{d,s,q,\eta} \|g\|_{L^\eta(\mathcal{O})} 
\|R\|_{\calL(L^2(\mathcal{O}))}.
\end{equation}
\end{lemma}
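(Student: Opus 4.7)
\textbf{Proof plan for Lemma \ref{lem:zetainfty}.} The plan is to peel the factor $R$ off via the ideal property of $\gamma$-radonifying operators and to transfer the resulting $M_g$-estimate from $\cO$ to the full space $\R^d$, where Proposition \ref{prop:M_g_Sobolev}\eqref{it:M_g_Sobolev1}\eqref{it:M_g_Sobolev1a} applies directly under exactly the hypotheses \eqref{eq:boundHsqsharp}. No new analytic input is needed; the heart of the argument was already carried out in Section \ref{s:gamma_consequences}.

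The first step uses the ideal property \eqref{eq:gamma_ideal} with $R \in \calL(L^2(\cO))$ as the right factor and the identity on $H^{-s,q}(\cO)$ as the left factor:
\[
\|M_g R\|_{\gamma(L^2(\cO),H^{-s,q}(\cO))} \leq \|M_g\|_{\gamma(L^2(\cO),H^{-s,q}(\cO))}\, \|R\|_{\calL(L^2(\cO))}.
\]
Thus it suffices to prove $\|M_g\|_{\gamma(L^2(\cO),H^{-s,q}(\cO))} \lesssim_{d,s,q,\eta} \|g\|_{L^\eta(\cO)}$.

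Next, I would transfer the bound to $\R^d$. Let $\tilde g \in L^\eta(\R^d)$ be the extension of $g$ by zero, so $\|\tilde g\|_{L^\eta(\R^d)} = \|g\|_{L^\eta(\cO)}$. Let $E : L^2(\cO) \to L^2(\R^d)$ denote extension by zero, which is an isometry, and let $\rho : H^{-s,q}(\R^d) \to H^{-s,q}(\cO)$ denote the distributional restriction, which is a contraction by the very definition of the quotient norm. A direct verification on test functions shows that for every $\phi \in L^2(\cO)$ the distribution $M_{\tilde g} E\phi$ vanishes on $\R^d\setminus\cO$ and coincides with $g\phi$ on $\cO$, so that as operators from $L^2(\cO)$ into $\D'(\cO)$ one has the factorization
\[
M_g \;=\; \rho \circ M_{\tilde g} \circ E.
\]
Applying the ideal property one more time yields
\[
\|M_g\|_{\gamma(L^2(\cO),H^{-s,q}(\cO))} \leq \|\rho\|\, \|M_{\tilde g}\|_{\gamma(L^2(\R^d),H^{-s,q}(\R^d))}\, \|E\| \leq \|M_{\tilde g}\|_{\gamma(L^2(\R^d),H^{-s,q}(\R^d))}.
\]

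Finally, the hypotheses \eqref{eq:boundHsqsharp} match verbatim the assumptions of Proposition \ref{prop:M_g_Sobolev}\eqref{it:M_g_Sobolev1}\eqref{it:M_g_Sobolev1a}, which delivers
\[
\|M_{\tilde g}\|_{\gamma(L^2(\R^d),H^{-s,q}(\R^d))} \lesssim_{d,s,q,\eta} \|\tilde g\|_{L^\eta(\R^d)} = \|g\|_{L^\eta(\cO)},
\]
and chaining with the previous two displays gives \eqref{eq:gamma_L2_Leta_endpoint}. There is no real obstacle: the sharp weak-Lebesgue bound on the Bessel potential kernel $\mathscr{G}_s$ and the $\gamma$-Young inequality that underpin Proposition \ref{prop:M_g_Sobolev}\eqref{it:M_g_Sobolev1a} do all the work, and the passage from $\R^d$ to $\cO$ is a purely formal extension-restriction step that preserves the $\gamma$-norm by the ideal property.
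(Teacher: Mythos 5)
Your proof is correct and follows essentially the same route as the paper: reduce to $\R^d$ by zero-extension, strip off $R$ with the ideal property, and invoke Proposition \ref{prop:M_g_Sobolev}\eqref{it:M_g_Sobolev1a}. The explicit factorization $M_g=\rho\circ M_{\tilde g}\circ E$ is just a more careful writing of the paper's one-line reduction to the full space, and it is valid since $E$ is an isometry and $\rho$ is a contraction by the quotient-norm definition of $H^{-s,q}(\cO)$.
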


Let us point out that, if $R$ is in the form \eqref{eq:Tmu_def} (i.e.\ $R=R_\mu$ for some $\mu\in \ell^0$), then 
$
\|R_\mu\|_{\calL(L^2(\mathcal{O}))}=\|\mu\|_{\ell^\infty},
$
and hence \eqref{eq:gamma_L2_Leta_endpoint} provides the bounds 
\begin{equation}
\label{eq:endpoint_infty_sob_embedding}
\|M_g R\|_{\gamma(L^2(\mathcal{O}), \wt{H}^{-s,q}(\mathcal{O}))}\lesssim_{d,s,q,\eta}\|g\|_{L^\eta(\mathcal{O})} \|\mu\|_{\ell^\infty(\S_f)},
\end{equation} 
where we used $\ell^\infty(\S_f) =\ell^\infty$ by the definition in \eqref{eq:ellzeta_F}.
The estimate \eqref{eq:endpoint_infty_sob_embedding} corresponds to the endpoint case $\zeta=\infty$ of Theorem \ref{thm:MgTmu delta}. Finally, note also that if $\mathcal{O}$ is bounded, taking  $R = \mathrm{Id}_{L^2(\Domm)}$ and $g\equiv 1$, the above result implies that the Sobolev embedding $L^2(\mathcal{O})\hookrightarrow \wt{H}^{-s,q}(\mathcal{O})$ is radonifying.

\begin{proof}[Proof of Lemma \ref{lem:zetainfty}]
Extending all functions as zero outside $\mathcal{O}$, to prove the above, it is sufficient to consider the case $\mathcal{O} = \R^d$. 
By the right-ideal property of $\gamma$-radonifying operators  \eqref{eq:gamma_ideal}, it suffices to prove $\|M_g\|_{\gamma(L^2(\R^d),H^{-s,q}(\R^d))}\lesssim_{d,s,q,\eta} \|g\|_{L^\eta(\R^d)}$. 
Under assumption \eqref{eq:boundHsqsharp}, the above is a consequence of Proposition \ref{prop:M_g_Sobolev}\eqref{it:M_g_Sobolev1a}.
\end{proof}  

Note that we do not need $f_n\in L^\infty(\cO)$ in Lemma \ref{lem:zetainfty} since it only uses  Proposition \ref{prop:M_g_Sobolev}. 
From the above proof and Proposition \ref{prop:M_g_Sobolev}\eqref{it:M_g_Sobolev1b}-\eqref{it:M_g_Sobolev1c} we also get variants of Lemma \ref{lem:zetainfty}. However, as commented below Proposition \ref{prop:M_g_Sobolev} and Theorem \ref{thm:MgTmu delta}, the latter are non-sharp results. For completeness, we include them in the following remark.

\begin{remark}[Non-sharp sufficient conditions for the validity of \eqref{eq:gamma_L2_Leta_endpoint}]
\label{rem:zetainfty2}
The conclusion of Lemma \ref{lem:zetainfty} also holds in the case 
\begin{itemize}
\item 
$
s>\frac{d}{2}$ and $\eta=q\geq 2$;
\item 
$s>\frac{d}{2}$, $\eta=2$, $q\in [2,\infty)$ and $\frac{s}{d} + \frac{1}{q} > 1$.
\end{itemize}
To see this, as in the proof of Lemma \ref{lem:zetainfty}, it is enough to consider $\Domm=\R^d$. Under the previous conditions, the estimate \eqref{eq:gamma_L2_Leta_endpoint} follows as above by noting that $M_g \in \g(L^2(\R^d),H^{-s,q}(\R^d))$ due to Proposition \ref{prop:M_g_Sobolev}\eqref{it:M_g_Sobolev1a}-\eqref{it:M_g_Sobolev1b}.
\end{remark}

\subsection{Proofs of Theorems \ref{thm:MgTmu delta} and \ref{thm:MgTmu delta_hom} and Proposition \ref{p:necessityLinfty_hom}}
\label{ss:proof_generalized_growth}
We begin by proving Theorem \ref{thm:MgTmu delta}; the proof of Theorem \ref{thm:MgTmu delta_hom} is similar. 
As explained in Subsection \ref{ss:proof_strategy}, the key idea in the proof of the former is to view the mapping
$$
(g,\mu)\mapsto M_g R_\mu
$$ 
as a bilinear operator on appropriate spaces, and interpolate the two endpoint cases analyzed in Lemmas \ref{lem:l2 2 delta} and \ref{lem:zetainfty}, using Proposition \ref{prop: interpolation}. 
To this end, we need the following proposition. While the result of this proposition is classical, we provide it as a way to conveniently show how we apply multilinear complex interpolation in the proof of Theorems \ref{thm:MgTmu delta} and \ref{thm:MgTmu delta_hom}. For this reason, we only sketch the proof.

To formulate it, we introduce the space
\begin{align}
    \ell^\zeta_w:=\Big\{(\mu_k)_{k\geq 1}\in \bR^\bN:\,\|\mu\|_{\ell^\zeta}^\zeta:=\sum_{k\geq 1}|\mu_k|^\zeta w(k)<\infty,\, \text{for}\,w = (w(k))_{k\geq 1}\in \bR_+^\bN \Big\}
\end{align}
if $\zeta<\infty$, and $\ell^\infty_w = \ell^\infty$. 

\begin{proposition}
    \label{prop: interpolation}
    Let parameters $\eta_0,\eta_1\in (1,\infty)$,  $\zeta_0, \zeta_1\in [2, \infty]$, $s_0,s_1\in \bR$, $p_0,p_1\in (1,\infty)$ and $q_0,q_1\in (1,\infty)$.
    Let $w_0 = (w_0(k))_{k\geq 1}\in \bR_+^\bN$ and $w_1 = (w_1(k))_{k\geq 1}\in\bR_+^\bN$. 
Suppose  that the bilinear operator
    \begin{align}
        R:=\Bigg\{\begin{split}
        L^{\eta_i}(\R^d)\times \ell^{\zeta_i}_{w_i}&\rightarrow \gamma(L^2(\R^d),H^{s_i,q_i}(\R^d)),\\
        (g,\mu)&\mapsto M_gR_\mu,
        \end{split}
        \quad i = 0,1
    \end{align}
    satisfies $\|R\|_{\cL(L^{\eta_i}\times\ell^{\zeta_i}_{w_i},\gamma (L^2,H^{s_i,q_i}))}\leq C_i$ for $i = 0,1$.
    If $\theta\in (0,1)$ and
    \begin{align}
        \frac{1}{p} = \frac{1-\theta}{p_0}+\frac{\theta}{p_1},\quad s = (1-\theta)s_0 + \theta s_1 
        \quad\text{and}\quad
        w(k) = (w_0(k))^{(1-\theta)\frac{\zeta}{\zeta_0}}(w_1(k))^{\theta\frac{\zeta}{\zeta_1}},\quad k\geq 1,
    \end{align}
    for $(p,p_0,p_1)\in \{(\eta,\eta_0, \eta_1),(\zeta,\zeta_0, \zeta_1),(q,q_0, q_1)\}$, then
    \begin{align}
        R:
        L^{\eta}(\R^d)\times \ell^{\zeta}_w\rightarrow \gamma(L^2(\R^d),H^{s,q}(\R^d)),
    \end{align}
    with norm $\|R\|_{\cL(L^{\eta}\times\ell^{\zeta}_w,\gamma (L^2,H^{s,q}))}\leq C_0^{1-\theta}C_1^\theta$.
\end{proposition}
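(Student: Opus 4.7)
The plan is to apply the bilinear complex interpolation theorem \cite[Section 4.4]{BeLo} to the bilinear assignment $R : (g,\mu) \mapsto M_g R_\mu$. By hypothesis, $R$ maps $L^{\eta_i}(\R^d) \times \ell^{\zeta_i}_{w_i}$ boundedly into $\gamma(L^2(\R^d), H^{s_i,q_i}(\R^d))$ with norm at most $C_i$ for $i \in \{0,1\}$. The desired conclusion will then follow at once after identifying the three complex interpolation spaces at the common parameter $\theta$.

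First, $[L^{\eta_0}(\R^d), L^{\eta_1}(\R^d)]_\theta = L^\eta(\R^d)$ is classical. For the second factor, writing $\|\mu\|_{\ell^{\zeta_i}_{w_i}}^{\zeta_i} = \sum_k |\mu_k|^{\zeta_i} w_i(k)$ realizes $\ell^{\zeta_i}_{w_i}$ as $L^{\zeta_i}$ of the counting measure weighted by $w_i$, so the classical complex interpolation of weighted Lebesgue spaces yields $[\ell^{\zeta_0}_{w_0}, \ell^{\zeta_1}_{w_1}]_\theta = \ell^\zeta_w$ with $w^{1/\zeta} = w_0^{(1-\theta)/\zeta_0} w_1^{\theta/\zeta_1}$, which rearranges to the formula in the statement.

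The main technical point is the third identification
\begin{equation*}
\big[\gamma(L^2(\R^d), H^{s_0,q_0}(\R^d)), \gamma(L^2(\R^d), H^{s_1,q_1}(\R^d))\big]_\theta = \gamma(L^2(\R^d), H^{s,q}(\R^d)).
\end{equation*}
I would handle this by combining the Bessel potential isomorphism $(1-\Delta)^{-\sigma/2} : L^q(\R^d) \to H^{\sigma,q}(\R^d)$ with the square function identification $\gamma(L^2(\R^d), L^q(\R^d)) \simeq L^q(\R^d; L^2)$ recorded in \eqref{eq: square fct char gamma}. The different smoothness levels at the two endpoints can be absorbed into the analytic family of Fourier multipliers $J_z := (1-\Delta)^{-((1-z)s_0 + z s_1)/2}$ on the strip $\{0 \leq \mathrm{Re}\, z \leq 1\}$; an application of Stein's analytic interpolation then reduces the identity to the classical vector-valued interpolation $[L^{q_0}(\R^d; L^2), L^{q_1}(\R^d; L^2)]_\theta = L^q(\R^d; L^2)$.

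With the three identifications in hand, bilinear complex interpolation delivers the desired boundedness of $R$ from $L^\eta(\R^d) \times \ell^\zeta_w$ into $\gamma(L^2(\R^d), H^{s,q}(\R^d))$ with operator norm at most $C_0^{1-\theta} C_1^\theta$. The main obstacle is unquestionably the third identification, since the two endpoint target spaces differ in both smoothness and integrability; the combination of Bessel lifting, $\gamma$-Fubini, and Stein's analytic interpolation is the canonical route that handles both dependencies simultaneously.
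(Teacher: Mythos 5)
Your proposal follows the same skeleton as the paper's proof: bilinear complex interpolation applied to $(g,\mu)\mapsto M_gR_\mu$, reduced to identifying the three interpolation couples. The first two identifications ($[L^{\eta_0},L^{\eta_1}]_\theta=L^\eta$ and $[\ell^{\zeta_0}_{w_0},\ell^{\zeta_1}_{w_1}]_\theta=\ell^\zeta_w$) are handled exactly as in the paper. The only genuine divergence is the third identification
$[\gamma(L^2,H^{s_0,q_0}),\gamma(L^2,H^{s_1,q_1})]_\theta=\gamma(L^2,H^{s,q})$:
the paper obtains it abstractly by combining the general theorem on complex interpolation of $\gamma$-spaces, $[\gamma(H,X_0),\gamma(H,X_1)]_\theta=\gamma(H,[X_0,X_1]_\theta)$ (\cite[Theorem 9.1.25]{HNVW2}), with the interpolation of Bessel potential spaces $[H^{s_0,q_0},H^{s_1,q_1}]_\theta=H^{s,q}$ (\cite[Theorem 14.7.12]{HNVW3}), whereas you propose to reprove it in this concrete situation via Bessel lifting, the square-function identification $\gamma(L^2,L^q)\simeq L^q(\R^d;L^2)$, and Stein's analytic interpolation with the family $(1-\Delta)^{-((1-z)s_0+zs_1)/2}$. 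Your route is viable --- it is essentially the standard proof of the Bessel-space interpolation result transported through $\gamma$-Fubini --- but it carries extra verification burden: you must check the admissible growth of the imaginary powers $(1-\Delta)^{i\tau}$ on $L^{q_i}(\R^d;L^2)$ (true, since these are UMD spaces) and the measurability/analyticity hypotheses of Stein's theorem, and the equivalence constants in \eqref{eq: square fct char gamma} mean you only recover the bound $C_0^{1-\theta}C_1^\theta$ up to a multiplicative constant (the paper has the same caveat, since its identifications also hold only with equivalent norms). The paper's citation-based route buys brevity; yours buys self-containedness. One small point you omit, which the paper flags: when the underlying scalars are real one must pass to complexifications to apply complex interpolation, and real and complex Gaussian sums are comparable only up to universal constants.
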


\begin{proof}
If the underlying spaces are over the real scalars, then we apply multilinear complex interpolation to a complexification of the spaces. Moreover, in the Gaussian series, the complex and real Gaussians can be interchanged up to some universal constants (see \cite[Proposition 6.1.21]{HNVW2}). We first collect the individual interpolation spaces. While they are stated in much more generality in the references provided below, we state them here only for the special cases from the present article.
    By \cite[Theorem 2.2.6]{HNVW1} and \cite[Theorem 14.3.1]{HNVW3},
    we get for $\theta\in (0,1)$ and $d\geq 1$, respectively,
    \begin{align}
        [L^{\eta_0}(\R^d),L^{\eta_1}(\R^d)]_\theta = L^\eta(\R^d)
        \quad\text{and}\quad
        [\ell^{\zeta_0}_{w_0},\ell^{\zeta_1}_{w_1}]_\theta = \ell^\zeta_w.
    \end{align}
    Moreover, by \cite[Theorem 14.7.12]{HNVW3} we have
    \begin{align}
        [H^{s_0,q_0}(\R^d),H^{s_1,q_1}(\R^d)]_\theta = H^{s,q}(\R^d).
    \end{align}
    We remark that all the equalities stated above hold with equivalent norms. Finally, by \cite[Theorem 9.1.25]{HNVW2} we have
    \begin{align}
        [\gamma(L^2(\R^d),H^{s_0,q_0}(\R^d)),\gamma(L^2(\R^d),H^{s_1,q_1}(\R^d))]_\theta = \gamma(L^2(\R^d),H^{s,q}(\R^d)).
    \end{align}
    Then the required result follows by an application of multilinear complex interpolation (see \cite[Section 4.4]{BeLo}).
\end{proof}

Before we prove the main result of this subsection, we derive two necessary conditions if one interpolates \eqref{eq: bound l2 2 delta} and \eqref{eq:boundHsqsharp}. Let $(s_0, q_0, \eta_0, \zeta_0)$ and $(s_1, q_1, \eta_1, \zeta_1)$ denote the parameters from \eqref{eq: bound l2 2 delta} and \eqref{eq:boundHsqsharp}, respectively. In particular, $\zeta_0=2$ and $\zeta_1= \infty$. 

First we observe that in \eqref{eq: bound l2 2 delta} and \eqref{eq:boundHsqsharp} we have $\eta_0\leq q_0$ and $\eta_1< q_1$. Therefore, after interpolation, we also have $\eta<q$.
Secondly, interpolation gives that $\frac1\zeta = \frac{1-\theta}{2} + \frac{\theta}{\infty}$, and thus $\theta = 1-\frac{2}{\zeta}$. This implies
\begin{align*}
\frac1\eta = \frac{1-\theta}{\eta_0} + \frac{\theta}{\eta_1} <1-\theta + \frac{\theta}{2} = \frac12 + \frac{1}{\zeta}
\end{align*}
Therefore, a necessary condition for the interpolation to work is that $\frac1\eta-\frac{1}{\zeta}<\frac12$.

\begin{proof}[Proof of Theorem \ref{thm:MgTmu delta}]
Note that the cases $\zeta = 2$ and $\zeta=\infty$ are contained in Lemmas \ref{lem:l2 2 delta} and \ref{lem:zetainfty}, respectively. Note that we specialized to $R_{\mu}$ in  \eqref{eq:MgRLinfty2} and \eqref{eq:endpoint_infty_sob_embedding}. Thus from now on, we may assume $\zeta\in (2, \infty)$. 
As before, we can reduce to $\mathcal{O} = \R^d$. 
By definition, $\ell^2(\Sf)$ corresponds to the space $\ell^2_w$ of weighted $\ell^2$-sequences with weight $w=(\|f_n\|_{L^\infty(\Domm)}^2)_{n\geq 1}$ and norm 
$$
\|\mu\|_{\ell^2_w}=\|(\mu_n\|f_n\|_{L^\infty(\Domm)})_{n\geq 1}\|_{\ell^2}.
$$
Hence, to apply Proposition \ref{prop: interpolation}, we need to find  $(s_0, q_0, \eta_0, \zeta_0)$ and $(s_1, q_1, \eta_1, \zeta_1)$ such that \eqref{eq: bound l2 2 delta} and \eqref{eq:boundHsqsharp} hold, respectively, i.e.
\begin{align}
\label{eq: zeta 0 zeta 1 parameters}
    \zeta_0=2\quad\text{and}\quad
    \quad\zeta_1 = \infty,
\end{align}
and such that
\begin{align}
\label{eq:condtheta}
\theta &= 1-\frac{2}{\zeta},
\\
    \label{eq:cond1} \frac{1}{q_0} &= \frac{1}{1-\theta} \Big(\frac{1}{q} - \frac{\theta}{q_1}\Big),
\\
\label{eq:cond2} \frac{1}{\eta_0} &= \frac{1}{1-\theta} \Big(\frac{1}{\eta} - \frac{\theta}{\eta_1}\Big),
\\
\label{eq:cond4} s_0 &= \frac{s}{1-\theta}-\frac{\theta s_1}{1-\theta},
\end{align}
The conditions above stem from rewriting the definitions of the interpolation parameters in accordance with Proposition \ref{prop: interpolation} in a way that is convenient for this proof. In particular, we obtain \eqref{eq:condtheta} from $\frac{1}{\zeta} = \frac{1-\theta}{\zeta_0} + \frac{\theta}{\zeta_1}$ and \eqref{eq: zeta 0 zeta 1 parameters}.
Supposing that \eqref{eq: bound l2 2 delta} and \eqref{eq:boundHsqsharp}  hold with equality, we get additionally
\begin{align}
\label{eq:cond3} \frac{s_1}{d} &= - \frac{1}{q_1}  + \frac{1}{\eta_1}+\frac12,
\\
\label{eq:cond5} \frac{1}{\eta_0} &= \frac{s_0}{d} + \frac{1}{q_0}.
\end{align}
It is straightforward to compute that when expressing $s$ from \eqref{eq:cond4} and using the other identities, one obtains the first condition in \eqref{eq:conditionsmain delta}.
Now it suffices to check that there exist $(q_1, \eta_1)$ such that the conditions 
$s_0\geq 0$, $q_0\in (1, \infty), 1<\eta_0\leq q_0$, $s_1>0$ and $2<\eta_1<q_1<\infty$ are satisfied, which will give a range of admissible $(s,q,\eta,\zeta)$.
Due to \eqref{eq:cond1} and \eqref{eq:cond2}, the condition $\eta_0\leq q_0$ holds if and only if  
\begin{align}\label{eq:restrqeta}
  \frac{1}{\eta_1} \leq   \frac{1}{q_1}   + \frac{1}{\theta \eta} - \frac{1}{\theta q}.
\end{align} 
The condition $\eta_0>1$, together with $\tfrac{1}{\eta_0}>0$, applied to \eqref{eq:cond2} leads to  
\begin{align}\label{eq:restrqeta2}
\frac{1}{\theta \eta} - \frac{1}{\theta}+1<\frac{1}{\eta_1}<\frac{1}{\theta \eta}.
\end{align} 
Similarly, the restriction $\frac{1}{q_0}>0$ leads to 
\begin{align}\label{eq:restrqeta3}
\frac{1}{q_1}<\frac{1}{\theta q}.
\end{align} 
The condition $\tfrac{1}{q_0}<1$ yields $\tfrac{1}{\theta q}-\tfrac{1}{q_1}<\tfrac{1}{\theta}-1$, which can be verified by bounding $\tfrac{1}{\theta q}$ using \eqref{eq:restrqeta} and using \eqref{eq:restrqeta2}.

Moreover, we have the restriction $\frac{1}{q_1}<\frac{1}{\eta_1}<\frac12$. The condition $s_0\geq 0$ is satisfied due to $\eta_0\leq q_0$. The condition $s_1>\tfrac{d}{2}$  follows from \eqref{eq:cond3} due to $q_1>\eta_1$. 
 
For the existence of $2<\eta_1<q_1$ satisfying \eqref{eq:restrqeta}, \eqref{eq:restrqeta2} and \eqref{eq:restrqeta3} we therefore require
\begin{align}
\label{eq:cond max min Thm MgTmu delta}
\max\Big\{\frac{1}{q_1}, \frac{1}{\theta \eta} - \frac{1}{\theta}+1\Big\}<\min\Big\{ \frac{1}{q_1}   + \frac{1}{\theta \eta} - \frac{1}{\theta q}, \frac{1}{\theta \eta}, \frac12\Big\} \ \ \text{and} \ \ \frac{1}{q_1}<\frac{1}{\theta q}.
\end{align}
We check if this holds for a suitable choice of $q_1$. 
Clearly $\tfrac{1}{q_1}<\tfrac{1}{q_1}+\tfrac{1}{\theta\eta}-\tfrac{1}{\theta q}$, since $q>\eta$.
The condition $\frac{1}{\theta \eta} - \frac{1}{\theta}+1<\frac12$  is equivalent to the second part of \eqref{eq:conditionsmain delta}. Thus, an admissible  $\eta_1$ exists if 
\[\frac{1}{\theta q}- \frac{1}{\theta}+1<\frac{1}{q_1}<\min\Big\{\frac12,\frac{1}{\theta q}\Big\}.\] 
The latter is feasible if and only if 
\[\frac{1}{\theta q}- \frac{1}{\theta}+1 <\min\Big\{\frac12, \frac{1}{\theta q}\Big\}.\]
The first part of the minimum leads to 
$\frac{1}{q} - \frac1{\zeta}<\frac12$ which always holds due to the second part of \eqref{eq:conditionsmain delta} and $q>\eta$. The second part of the minimum leads to $-\frac{1}{\theta}+1<0$, which always holds. Hence, so long as the condition \eqref{eq:conditionsmain delta} holds, there exist $(s_0, q_0, \eta_0, \zeta_0)$ and $(s_1, q_1, \eta_1, \zeta_1)$ such that \eqref{eq: bound l2 2 delta} and \eqref{eq:boundHsqsharp} hold, respectively, and such that the conditions of Proposition \ref{prop: interpolation} are satisfied. The desired result is then an immediate consequence of Proposition \ref{prop: interpolation}.
\end{proof}

Next, we prove Theorem \ref{thm:MgTmu delta_hom}.

\begin{proof}[Proof of Theorem \ref{thm:MgTmu delta_hom} -- Sketch]
The proof is very similar to that of Theorem \ref{thm:MgTmu delta}. Therefore, we only provide a sketch of the proof.

\emph{Step 1: Homogeneous variant of Lemma \ref{lem:zetainfty} -- Suppose that 
$s\in (\frac{d}{2},d)$, $2<\eta<q<\infty$, and $\frac{s}{d} + \frac{1}{q} = \frac{1}{\eta}+\frac{1}{2}$. 
Then for each $g\in L^{\eta}(\mathcal{O})$ and $R\in \calL(L^2(\R^d))$, the operator $M_g R:L^2(\R^d)\to \dot{H}^{-s,q}(\R^d)$ is $\gamma$-radonifying and}
\begin{equation}
\label{eq:homogeneous_spaces_Mg_R_bounds_step1}
\|M_g R\|_{\gamma(L^2(\R^d),\dot{H}^{-s,q}(\R^d))}\lesssim_{d,s,q,\eta} \|g\|_{L^\eta(\R^d)} 
\|R\|_{\calL(L^2(\R^d))}.
\end{equation}

As in the proof of Lemma \ref{lem:zetainfty}, the above follows by the ideal property of $\g$-spaces and 
$
M_g\in \g(L^2(\R^d),\dot{H}^{-s,q}(\R^d)),
$
see  \eqref{eq:gamma_ideal} and Proposition \ref{prop:M_g_Sobolev_hom}, respectively.

\smallskip

\emph{Step 2: Conclusion}. The conclusion follows by interpolating the result of the homogeneous version of Lemma \ref{lem:l2 2 delta} in the sharp case $\frac{s}{d} + \frac{1}{q} = \frac{1}{\eta}$ and Step 1 via Proposition \ref{prop: interpolation} as in the proof of Theorem \ref{thm:MgTmu delta}.
\end{proof}

 It remains to prove Proposition  \ref{p:necessityLinfty_hom}.

\begin{proof}[Proof of Proposition \ref{p:necessityLinfty_hom}]
Before we turn to the proof we make a preliminary observation. 
Let \(K\subset \mathcal O\) be a compact set. Choose \(\chi\in C_{{\rm c}}^\infty(\mathcal O)\) such
that \(\chi=1\) on a neighbourhood of \(K\). Then, for every
\(\sigma\in\mathbb R\), \(q\in(1,\infty)\), and every
\(u\in H^{\sigma,q}(\mathcal O)\) with \(\operatorname{supp}u\subseteq K\),
one has
\begin{equation}\label{eq:ext0prop}
\|E_0u\|_{H^{\sigma,q}(\mathbb R^d)}
\lesssim_{\chi,\sigma,q}
\|u\|_{H^{\sigma,q}(\mathcal O)},
\end{equation}
where $E_0$ denotes the extension by zero. The proof uses the standard fact that  multiplication by
\(\chi\in C_{{\rm c}}^\infty(\mathbb R^d)\) is bounded on
\(H^{\sigma,q}(\mathbb R^d)\).

Next we turn to the proof of the proposition. We consider the case where $\Domm$ is an open set of $\R^d$.
Let $B:=B_r(x) \subseteq \Domm$ be an open ball. For simplicity, we assume $x=0$ and $r=1$; the general case is analogous.
Fix $h,g,e\in C^\infty_{{\rm c}}(B)$ and $\|e\|_{L^2(\Domm)}=1$. 
Applying \eqref{eq:estimate_required_necessity_II} to the operator  
$$
R= e\otimes h= \mu\, e\otimes (h/\|h\|_{L^2(\Domm)}) \ \ \text{ with }\ \ \mu= 
\|h\|_{L^2(\Domm)} \ \text{ and } \ f=h/\|h\|_{L^2(\Domm)},
$$
we get 
$$
\|g h\|_{H^{-s,q}(\Domm)}\leq C\|g\|_{L^\eta(\Domm)} \|h\|_{L^2(\Domm)}^{1-2/\zeta}\|h\|_{L^\infty(\Domm)}^{2/\zeta}.
$$
In the above, we used that 
$
\|M_g R\|_{\gamma(L^2(\Domm),H^{-s,q}(\Domm))} = \|g h\|_{H^{-s,q}(\Domm)}
$ 
and 
$
\|\mu\|_{\ell^\zeta(\Sf)}
= \|h\|_{L^2(\Domm)}^{1-2/\zeta}\|h\|_{L^\infty(\Domm)}^{2/\zeta},
$
which follows from \eqref{eq:def_gamma_norm} and \eqref{eq:ellzeta_F}, respectively.

Since $h,g\in C^\infty_{{\rm c}}(B)$, the above displayed inequality and \eqref{eq:ext0prop} imply
\begin{equation}
\label{eq:estimate_full_space_one_side}
\|g h\|_{H^{-s,q}(\R^d)}\leq C\|g\|_{L^\eta(\R^d)} \|h\|_{L^2(\R^d)}^{1-2/\zeta}\|h\|_{L^\infty(\R^d)}^{2/\zeta},
\end{equation}
where with $g,h$ we also denote the extension by zero outside $B$.

We now derive the claimed conditions on the parameters $(s,q,\eta,\zeta)$ by a scaling argument and the previous displayed inequality.
Recall the one-sided scaling property of the norm of $H^{-s,q}(\R^d)$, which follows by interpolation and the equivalence $H^{-s,q}(\R^d)=(H^{s,q'}(\R^d))^*$ where $q'=q/(q-1)$ is the conjugate exponent of $q$ (or alternatively, see \cite[3.4.1 and 2.5.6]{Tri83}). 
Let $n\geq 1$ be an integer, and note that $g_n = g(2^n \cdot)$ and $h_n = h(2^n\cdot)$ are still supported in the unit ball $B$. 
Hence, applying \eqref{eq:estimate_full_space_one_side} with $(g,h)$ replaced by $(g_n,h_n)$, we have 
\begin{align}
\label{eq:scaling_weighted_sequences}
2^{n(-s-d/q)} c_{g,h} & \lesssim \|g_n h_n\|_{H^{-s,q}(\R^d)} 
\\ 
\nonumber
&  \leq C \|g_n\|_{L^\eta(\R^d)} \|h_n\|_{L^2(\R^d)}^{1-2/\zeta}\|h_n\|_{L^\infty(\R^d)}^{2/\zeta}
\\ 
\nonumber
& \leq C 2^{-n d/\eta} 2^{-n(d/2-d/\zeta)} C_{g,h},
\end{align}
for constants $c_{g,h}, C_{g,h}$ depending only on $\|g\|_{L^\eta(\R^d)}$ and $\|h\|_{L^\infty(\R^d)}$.
From the arbitrariness of $n$, it follows that $-s-\frac{d}{q}\leq - \frac{d}{\eta} - \frac{d}{2}+\frac{d}{\zeta}$, which coincides with the required condition.

\smallskip

Finally, in case $\Domm=\R^d$ and \eqref{eq:estimate_required_necessity_II} holds with $H^{-s,q}(\R^d)$ replaced by $\dot{H}^{-s,q}(\R^d)$, then the argument in \eqref{eq:scaling_weighted_sequences} is also valid with $n\in \Z$. In the latter situation, the estimate \eqref{eq:scaling_weighted_sequences} implies $-s-\frac{d}{q}= - \frac{d}{\eta} - \frac{d}{2}+\frac{d}{\zeta}$, which is the claimed condition.
\end{proof}

\subsection{Bird's eye view on the interpolation argument and applications to random fields}\label{ss:Bird} 
From the proof of Theorem \ref{thm:MgTmu delta} and the fact that Lemmas \ref{lem:l2 2 delta} and \ref{lem:zetainfty} hold for general operators in $\calL(L^2(\cO),L^\infty(\cO))$ and $\calL(L^2(\cO))$, it is evident that the conclusion of Theorem \ref{thm:MgTmu delta} holds if we replace $R_\mu$ by an operator $R$ in the complex interpolation space
\begin{equation}
\label{eq:interpolation_HS}
[ \calL(L^2(\cO), L^\infty(\cO)),\calL(L^2(\cO))]_{1 - 2/\zeta}.  
\end{equation}
Note that the spaces $\calL(L^2(\cO), L^\infty(\cO))$ and $\calL(L^2(\cO))$ form an interpolation couple as they both embed into $\calL(L^2(\cO), L^2(\cO)+L^\infty(\cO))$ (recall that $\calL(X)=\calL(X,X)$). In particular, the above complex interpolation is well-defined (see e.g.\ \cite[Subsection 2.3]{BeLo} and \cite[Definition C.1.1]{HNVW1}).

The above observation and the proofs of Theorems \ref{thm:MgTmu delta} and \ref{thm:MgTmu delta_hom} yield the following:

\begin{proposition}[Abstract version of the Sobolev embedding for Gaussian series]\label{prop:abstract}
Let $\cO$ be an open set in $\R^d$. Let $q\in (1, \infty)$, $\eta\in (1, q)$, $\zeta\in [2, \infty]$ and $s\in(0,d)$ be such that 
\begin{align}\label{eq:conditionsmain delta2}
\frac{s}{d} + \frac{1}{q}  \geq \frac{1}{\eta} + \frac{1}{2} - \frac{1}{\zeta}\qquad \text{ and }\qquad 
\frac{1}{\eta} - \frac{1}{\zeta}<\frac{1}{2}.
\end{align}
Then for each $g\in L^{\eta}(\mathcal{O})$ and $R$ in the space \eqref{eq:interpolation_HS}, the operator 
$M_g R:L^2(\mathcal{O})\to \wt{H}^{-s,q}(\mathcal{O})$
is $\gamma$-radonifying and 
\begin{align}\label{eq:maingammabound2}
\|M_g R\|_{\gamma(L^2(\mathcal{O}), \wt{H}^{-s,q}(\mathcal{O}))}\lesssim_{d,s,q,\eta,\zeta} \|R\|_{[\calL(L^2(\cO), L^\infty(\cO)), \calL(L^2(\cO))]_{1 - 2/\zeta}} \|g\|_{L^{\eta}(\mathcal{O})}.
\end{align} 
If $\cO=\R^d$ and the first condition in \eqref{eq:conditionsmain delta2} is satisfied with equality, then \eqref{eq:maingammabound2} also holds with $H^{-s,q}(\R^d)$ replaced by the homogeneous space $\dot{H}^{-s,q}(\R^d)$.
\end{proposition}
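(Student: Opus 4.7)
The plan is to run the same bilinear complex interpolation argument as in the proof of Theorem \ref{thm:MgTmu delta}, but applied directly to the operator variable $R$ rather than to a weighted sequence $\mu$ that represents $R$. The essential observation is that Lemmas \ref{lem:l2 2 delta} and \ref{lem:zetainfty} are already stated and proved for arbitrary operators $R \in \calL(L^2(\cO), L^\infty(\cO))$ and $R \in \calL(L^2(\cO))$ respectively; the specialization to $R = R_\mu$ in Theorem \ref{thm:MgTmu delta} was only for the intended application. Note first that $\calL(L^2(\cO), L^\infty(\cO))$ and $\calL(L^2(\cO))$ form a valid interpolation couple, since both embed into $\calL(L^2(\cO), L^2(\cO) + L^\infty(\cO))$, so the complex interpolation space in \eqref{eq:interpolation_HS} is well defined.

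Next I would record the two endpoint estimates. For the $\zeta = 2$ endpoint, pick parameters $(s_0, q_0, \eta_0)$ with $s_0 \geq 0$, $q_0 \in (1, \infty)$, $\eta_0 \in (1, q_0]$ and $\tfrac{s_0}{d} + \tfrac{1}{q_0} \geq \tfrac{1}{\eta_0}$; Lemma \ref{lem:l2 2 delta} gives boundedness of the bilinear map $(g, R) \mapsto M_g R$ from $L^{\eta_0}(\cO) \times \calL(L^2(\cO), L^\infty(\cO))$ into $\gamma(L^2(\cO), H^{-s_0, q_0}(\cO))$. For the $\zeta = \infty$ endpoint, pick $(s_1, q_1, \eta_1)$ with $s_1 \in (d/2, d)$, $2 < \eta_1 < q_1 < \infty$ and $\tfrac{s_1}{d} + \tfrac{1}{q_1} \geq \tfrac{1}{\eta_1} + \tfrac{1}{2}$; Lemma \ref{lem:zetainfty} gives the analogous bilinear boundedness from $L^{\eta_1}(\cO) \times \calL(L^2(\cO))$. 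As in the proofs of Theorems \ref{thm:MgTmu delta} and \ref{thm:MgTmu delta_hom}, extending by zero reduces matters to $\cO = \R^d$ in both cases.

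I would then apply multilinear complex interpolation in the spirit of Proposition \ref{prop: interpolation} (cf.\ \cite[Section 4.4]{BeLo}), with $\theta = 1 - 2/\zeta$, using the standard interpolation identities for $L^p$-spaces, for Bessel potential spaces $H^{s,q}$, and for $\gamma$-spaces (\cite[Theorem 9.1.25]{HNVW2}). The interpolation space on the $R$-side is precisely $[\calL(L^2(\cO), L^\infty(\cO)), \calL(L^2(\cO))]_{1-2/\zeta}$, which is exactly the space appearing in \eqref{eq:maingammabound2}; no further interpolation identity needs to be verified there, since this is just the definition of the norm on the right-hand side. The parameter-matching step — finding admissible $(s_0, q_0, \eta_0)$ and $(s_1, q_1, \eta_1)$ in terms of $(s, q, \eta, \zeta)$ so that the interpolation recovers the target parameters — depends only on the scalar data, so the feasibility analysis of Theorem \ref{thm:MgTmu delta} culminating in condition \eqref{eq:cond max min Thm MgTmu delta} applies verbatim under the assumption \eqref{eq:conditionsmain delta2}.

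For the homogeneous case $\cO = \R^d$ with equality in the first part of \eqref{eq:conditionsmain delta2}, I would replace Lemma \ref{lem:zetainfty} by its homogeneous counterpart \eqref{eq:homogeneous_spaces_Mg_R_bounds_step1} (Step 1 in the proof of Theorem \ref{thm:MgTmu delta_hom}, itself a direct consequence of Proposition \ref{prop:M_g_Sobolev_hom}) and run the same interpolation scheme; the scaling-invariant identity $\tfrac{s}{d} + \tfrac{1}{q} = \tfrac{1}{\eta} + \tfrac{1}{2} - \tfrac{1}{\zeta}$ is preserved because both endpoints lie on the corresponding critical lines. No new step is genuinely difficult — everything reduces to quoting the two endpoint lemmas and invoking bilinear complex interpolation — and the only minor subtlety is that the $R$-variable now ranges over Banach spaces of operators rather than weighted sequence spaces, but this is handled automatically by general multilinear interpolation.
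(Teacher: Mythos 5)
Your proposal is correct and follows essentially the same route as the paper: the authors also obtain Proposition \ref{prop:abstract} by observing that Lemmas \ref{lem:l2 2 delta} and \ref{lem:zetainfty} (and the homogeneous variant \eqref{eq:homogeneous_spaces_Mg_R_bounds_step1}) are already stated for arbitrary operators in $\calL(L^2(\cO),L^\infty(\cO))$ and $\calL(L^2(\cO))$, noting that these form an interpolation couple, and rerunning the bilinear complex interpolation and parameter-matching of Theorems \ref{thm:MgTmu delta} and \ref{thm:MgTmu delta_hom} with the $R$-variable taken from \eqref{eq:interpolation_HS}. No gaps.
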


Although the above result looks very promising, the drawback is that an explicit description of \eqref{eq:interpolation_HS} appears out of reach at the moment. However, note that, by the Riesz-Thorin interpolation, the space in \eqref{eq:interpolation_HS} embeds into $\calL(L^2(\cO),L^{\zeta}(\cO))$. Unfortunately, this is not so helpful to know. 

\smallskip

To avoid dealing with the large space \eqref{eq:interpolation_HS}, it is useful to specialize to concrete subspaces of operators, and then interpolate the latter. More precisely, one embeds (or \emph{represents}) the spaces $\mathcal{X}_0$ and $\mathcal{X}_1$ into $ \calL(L^2(\cO), L^\infty(\cO))$ and $\calL(L^2(\cO))$, respectively, via a linear assignment  $\vartheta\mapsto R_\vartheta$. Then, by interpolation, $[\mathcal{X}_0,\mathcal{X}_1]_{1-2/\zeta}$ embeds into \eqref{eq:interpolation_HS}, and Proposition \ref{prop:abstract} implies a corresponding estimate. This program has been successfully applied in the proofs of Theorems \ref{thm:MgTmu delta} and \ref{thm:MgTmu delta_hom}, where the representation is via the mapping $\mu\mapsto R_\mu$, $\mathcal{X}_0=\ell^2(\Sf)$, $\mathcal{X}_1=\ell^\infty(\Sf)$, where in \eqref{eq:Tmu_def}, $(e_n)_{n\geq 1}$ is an orthonormal basis and $\Sf=(f_n)_{n\geq 1}$ an orthonormal system such that $f_n\in L^\infty(\cO)$ for all $n\geq 1$. A similar method will be used in Section \ref{s:weight_necessary} for general orthonormal systems $(f_n)_{n\geq 1}$ (thus, without any boundedness assumption), where the endpoint provided by Lemma \ref{lem:l2 2 delta} will be changed accordingly to the setting. 

\smallskip

Below, we present another situation where the above construction can be performed in the case $\cO = \R^d$ to obtain regularity results for Gaussian series. In the latter case, it might be unnatural to work with a representation in terms of an orthonormal basis $(f_n)_{n\geq 1}$. Instead, Fourier multipliers form a more natural concept on $\R^d$. Applications will be given in Theorem \ref{thm:regulartity_matern_with_multiplicative_g} 
to  Mat\'ern random fields and  Theorem \ref{thm:heatMatern} to the stochastic heat equation on the full space. 
For details on Fourier multipliers, the reader is referred to \cite{Grafakos1, Horm60}. 
Also in this case, the operators $R$ in the subclass of \eqref{eq:interpolation_HS} can be identified as well. Now, the role of the representation parameter $\vartheta$ is played by the \emph{multiplier symbol}. 

To proceed further, we collect some notation. For a tempered distribution $m\in \S'(\R^d)$, let $T_m:\S(\R^d)\to \S'(\R^d)$ be defined by 
\begin{equation}
\label{eq:Fourier_multiplier}
T_m \phi= \F^{-1} (m \,\F\phi) =  (\F^{-1}{m})*\phi,
\end{equation}
where $\F$ and $\F^{-1}$ denote the Fourier transform and its inverse, respectively.
H\"older's inequality and Plancherel's identity ensure that, for all $\phi\in \S(\R^d)$,
\begin{align*}
\|T_m\phi\|_{L^\infty(\R^d)} &\leq \|\F^{-1}m\|_{L^2(\R^d)} \|\phi\|_{L^2(\R^d)} = \|m\|_{L^2(\R^d)} \|\phi\|_{L^2(\R^d)},
\\  \|T_m\phi\|_{L^2(\R^d)} &= \|m\, \F\phi\|_{L^2(\R^d)} \leq \|m\|_{L^\infty(\R^d)} \|\phi\|_{L^2(\R^d)}.
\end{align*}
In particular, $m\mapsto T_m$ gives a representation of $L^2(\R^d)$ and $L^\infty(\R^d)$ in $\calL(L^2(\R^d),L^\infty(\R^d))$ and $\calL(L^2(\R^d))$, respectively. 
Therefore, by interpolation, we obtain that for 
$$
m\in [L^2(\R^d),L^\infty(\R^d)]_{1 - 2/\zeta} = L^{\zeta}(\R^d),
$$ 
the operator
$T_m$ belongs in the space \eqref{eq:interpolation_HS} for $\cO = \R^d$ and satisfies 
\[\|T_m\|_{[\calL(L^2(\R^d), L^\infty(\R^d)), \calL(L^2(\R^d))]_{1 - 2/\zeta}}  \leq \|m\|_{L^{\zeta}(\R^d)}.  \]

Combining this with Proposition \ref{prop:abstract}, we obtain
 
\begin{theorem}[Random fields and Fourier multipliers]\label{thm:Randomfield}
Let $q\in (1, \infty)$, $\eta\in (1, q)$, $\zeta\in [2, \infty]$ and $s\in(0,d)$ be such that 
\begin{align}\label{eq:conditionsmain delta3}
\frac{s}{d} + \frac{1}{q}  \geq \frac{1}{\eta} + \frac{1}{2} - \frac{1}{\zeta}\qquad \text{ and }\qquad 
\frac{1}{\eta} - \frac{1}{\zeta}<\frac{1}{2}.
\end{align}
Then for each $g\in L^{\eta}(\R^d)$ and $m\in L^{\zeta}(\R^d)$, the operator 
$M_g T_m:L^2(\R^d)\to H^{-s,q}(\R^d)$ is $\gamma$-radonifying and 
\begin{align}\label{eq:maingammabound3}
\|M_g T_m\|_{\gamma(L^2(\R^d),H^{-s,q}(\R^d))}\lesssim_{d,s,q,\eta,\zeta} \|m\|_{L^{\zeta}(\R^d)} \|g\|_{L^{\eta}(\R^d)}.
\end{align} 
Moreover, if the first condition in \eqref{eq:conditionsmain delta3} is satisfied with equality, then \eqref{eq:maingammabound3} also holds with $H^{-s,q}(\R^d)$ replaced by the homogeneous space $\dot{H}^{-s,q}(\R^d)$.
\end{theorem}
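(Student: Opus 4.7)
The plan is to deduce Theorem \ref{thm:Randomfield} directly from Proposition \ref{prop:abstract} by showing that the Fourier multiplier assignment $m \mapsto T_m$ extends to a bounded linear map from $L^\zeta(\R^d)$ into the complex interpolation space $[\calL(L^2(\R^d), L^\infty(\R^d)), \calL(L^2(\R^d))]_{1-2/\zeta}$ with operator norm controlled by $\|m\|_{L^\zeta(\R^d)}$. Most of the ingredients are already laid out in the discussion preceding the statement, so the argument is essentially mechanical, a clean application of the bilinear-interpolation paradigm of Subsection \ref{ss:Bird}.

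First, I would record the two endpoint boundedness properties of $m \mapsto T_m$. For $m \in L^2(\R^d)$ and $\phi \in \Schw(\R^d)$, combining $T_m \phi = (\F^{-1}m) * \phi$ with the Cauchy--Schwarz inequality and the Plancherel identity gives
\[
\|T_m\phi\|_{L^\infty(\R^d)} \leq \|\F^{-1} m\|_{L^2(\R^d)} \|\phi\|_{L^2(\R^d)} = \|m\|_{L^2(\R^d)} \|\phi\|_{L^2(\R^d)},
\]
so $m \mapsto T_m$ is bounded from $L^2(\R^d)$ into $\calL(L^2(\R^d), L^\infty(\R^d))$ with norm at most $1$. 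Symmetrically, for $m \in L^\infty(\R^d)$, the identity $\|T_m \phi\|_{L^2(\R^d)} = \|m\, \F\phi\|_{L^2(\R^d)}$ together with Plancherel yields boundedness of $m \mapsto T_m$ from $L^\infty(\R^d)$ into $\calL(L^2(\R^d))$ with norm at most $1$.

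Next, since $(L^2(\R^d), L^\infty(\R^d))$ is an interpolation couple with $[L^2(\R^d), L^\infty(\R^d)]_{1-2/\zeta} = L^\zeta(\R^d)$ by the standard complex interpolation identity for Lebesgue spaces (see e.g.\ \cite[Theorem 2.2.6]{HNVW1}), and since the target couple $(\calL(L^2(\R^d), L^\infty(\R^d)), \calL(L^2(\R^d)))$ is admissible (as noted just before Proposition \ref{prop:abstract}, both spaces embed into $\calL(L^2(\R^d), L^2(\R^d)+L^\infty(\R^d))$), complex interpolation applied to the linear map $m \mapsto T_m$ with $\theta = 1 - 2/\zeta$ gives, for every $m \in L^\zeta(\R^d)$,
\[
\|T_m\|_{[\calL(L^2(\R^d), L^\infty(\R^d)), \calL(L^2(\R^d))]_{1-2/\zeta}} \leq \|m\|_{L^\zeta(\R^d)}.
\]

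Finally, invoking Proposition \ref{prop:abstract} with $\cO = \R^d$ and $R = T_m$ produces the bound \eqref{eq:maingammabound3}. The homogeneous case is handled identically: under the equality in the first condition of \eqref{eq:conditionsmain delta3}, the last assertion of Proposition \ref{prop:abstract} yields the same estimate with $H^{-s,q}(\R^d)$ replaced by $\dot{H}^{-s,q}(\R^d)$. I do not expect any serious obstacle; the only point requiring a modicum of care is checking that the complex interpolation functor genuinely applies to $m \mapsto T_m$ in this form, which is routine since the assignment is linear and unambiguously defined on the sum $L^2(\R^d) + L^\infty(\R^d)$ via \eqref{eq:Fourier_multiplier}.
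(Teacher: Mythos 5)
Your proposal is correct and follows essentially the same route as the paper: the two endpoint bounds for $m\mapsto T_m$ via Plancherel, complex interpolation using $[L^2(\R^d),L^\infty(\R^d)]_{1-2/\zeta}=L^\zeta(\R^d)$ to place $T_m$ in the space $[\calL(L^2(\R^d),L^\infty(\R^d)),\calL(L^2(\R^d))]_{1-2/\zeta}$, and then an application of Proposition \ref{prop:abstract}. This is exactly the argument given in the text preceding the theorem.
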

Examples of multipliers $m$ leading to Mat\'ern fields will be considered in Theorems \ref{thm:regulartity_matern_with_multiplicative_g} and  \ref{thm:heatMatern}. It is worth noting that the first condition \eqref{eq:conditionsmain delta3} is necessary also in the Fourier multipliers case. Indeed, the estimate \eqref{eq:maingammabound3} implies 
\begin{equation}
\label{eq:byproduct_Fourier}
\|g T_m f\|_{H^{-s,q}(\R^d)}\lesssim \|m\|_{L^\zeta(\R^d)}\|g\|_{L^\eta(\R^d)}\|f\|_{L^2(\R^d)} \ \ \text{for all }\ f\in L^2(\R^d).
\end{equation}
By rescaling $(g,m,f)\mapsto (g_n,m_n,f_n):=(g(2^n\cdot),m(2^{-n}\cdot),f(2^n\cdot))$ in \eqref{eq:byproduct_Fourier}, and $T_{m_n} f_n= (T_m f)(2^n \cdot)$ as well as the one-sided scaling property of $H^{-s,q}(\R^d)$ (see the text preceding \eqref{eq:scaling_weighted_sequences}; 
and Proposition \ref{prop:M_g_Sobolev} for an analogous computation)
one can show the necessity of the first condition in \eqref{eq:conditionsmain delta3}, which is an equality in the case of homogeneous spaces.

\smallskip

It is clear from the proof of Theorem \ref{thm:Randomfield} that it extends to $\T^d$ as well. However, as we will see in the next subsection, this can also be obtained with the method of Theorem \ref{thm:MgTmu delta} (see Theorem \ref{thm:MgTmu deltaTd}) because of the special properties of the Fourier basis on $\T^d$.

\subsection{The periodic setting}
\label{ss:periodic_setting}
In this subsection, we comment on the extension of the above results in the case $\cO=\T^d$. 
First, by repeating the proofs of Section \ref{s:gamma_consequences} (see Remark \ref{rem:groups}) and Theorem \ref{thm:MgTmu delta}, it is clear that the latter also holds in the periodic setting. To formulate the latter, we use the periodic variant of $H^{-s,q}(\T^d)$ (see \cite[Chapter 9]{Tri83}). Thus, we obtain 

\begin{theorem}[Sobolev embeddings for Gaussian series -- Periodic weighted case]
\label{thm:MgTmu deltaTd}
The statement of Theorem \ref{thm:MgTmu delta} holds with $\mathcal{O}$ replaced by $\T^d$. 
\end{theorem}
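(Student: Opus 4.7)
The plan is to run the proof of Theorem \ref{thm:MgTmu delta} verbatim, with $\R^d$ systematically replaced by $\T^d$, after checking that each of the three ingredients (the two endpoint lemmas and the multilinear complex interpolation tool) carries over to the periodic setting. Since the entire argument has already been reduced to the interpolation of two endpoints, and both endpoints are explicitly flagged as periodic-friendly in earlier remarks, no new ideas should be needed.

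For the endpoint $\zeta=2$, I would rewrite Lemma \ref{lem:l2 2 delta} with $\T^d$ in place of $\cO$. The proof uses only (i) the Sobolev embedding $L^\eta(\T^d)\embed H^{-s,q}(\T^d)$ under \eqref{eq: bound l2 2 delta}, which is standard on the torus (see \cite[Chapter 9]{Tri83}); (ii) the square-function characterization $\gamma(L^2(\T^d),L^\eta(\T^d))\simeq L^\eta(\T^d;\ell^2)$, which holds on any $\sigma$-finite measure space; and (iii) the pointwise bound $(\sum_n |Re_n|^2)^{1/2}\leq \|R\|_{\calL(L^2,L^\infty)}$ from \cite[Corollary 9.3.3]{HNVW2}. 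All three are insensitive to the replacement $\R^d\rightsquigarrow \T^d$.

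For the endpoint $\zeta=\infty$, I would run the proof of Lemma \ref{lem:zetainfty} on $\T^d$. By the ideal property \eqref{eq:gamma_ideal}, everything reduces to establishing $\|M_g\|_{\gamma(L^2(\T^d),H^{-s,q}(\T^d))}\lesssim \|g\|_{L^\eta(\T^d)}$ under the assumption \eqref{eq:boundHsqsharp}. This is precisely the periodic version of Proposition \ref{prop:M_g_Sobolev}\eqref{it:M_g_Sobolev1a}, whose validity on $\T^d$ is recorded in Remark \ref{rem:groups}: it holds on any locally compact Abelian group equipped with the Haar measure, because the underlying $\gamma$-Young inequality (Theorem \ref{t:gammaYoung}) and the estimates for the Bessel-type kernel only rely on Fubini, the Parseval identity, and weak-type Young's inequality, all of which are available in the group setting. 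I would simply invoke Remark \ref{rem:groups}.

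For the interpolation step, I would verify that Proposition \ref{prop: interpolation} (originally stated for $\R^d$) holds identically for $\T^d$. The required identifications are $[L^{\eta_0}(\T^d),L^{\eta_1}(\T^d)]_\theta = L^\eta(\T^d)$, $[H^{s_0,q_0}(\T^d),H^{s_1,q_1}(\T^d)]_\theta = H^{s,q}(\T^d)$ (cf.\ \cite[Section 9.2]{Tri83}), the weighted $\ell^\zeta$-interpolation (which is independent of the spatial domain), and the $\gamma$-interpolation $[\gamma(L^2,H^{s_0,q_0}),\gamma(L^2,H^{s_1,q_1})]_\theta=\gamma(L^2,H^{s,q})$ from \cite[Theorem 9.1.25]{HNVW2}, which is again insensitive to the target space being defined over $\T^d$ rather than $\R^d$. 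Once these are in place, the arithmetic selection of the parameters $(s_i,q_i,\eta_i,\zeta_i)$ and the feasibility analysis \eqref{eq: zeta 0 zeta 1 parameters}--\eqref{eq:cond max min Thm MgTmu delta} transcribe without change. I do not expect any genuine obstacle; the only mild point of vigilance is to consistently use the periodic definition of $H^{-s,q}(\T^d)$ so that interpolation and duality formulas apply literally.
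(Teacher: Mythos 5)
Your proposal is correct and coincides with the paper's own argument: the paper proves Theorem \ref{thm:MgTmu deltaTd} precisely by noting (via Remark \ref{rem:groups}) that the $\gamma$-Young inequality and hence Proposition \ref{prop:M_g_Sobolev} transfer to $\T^d$, and then repeating the two-endpoint interpolation proof of Theorem \ref{thm:MgTmu delta} with the periodic Bessel potential spaces. Your itemized verification of the two endpoint lemmas and of the interpolation identities on $\T^d$ is exactly the content the paper leaves implicit.
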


As on the periodic setting $\T^d$, Fourier multipliers are diagonal operators with respect to the standard Fourier basis, i.e.\ $$
f_k(x)=e_k(x):=e^{2\pi i k\cdot x} \ \ \text{ for } \ k\in \Z^d
$$ 
the above result also yields the periodic version of Theorem \ref{thm:Randomfield}. 

Because of its fundamental role in our analysis, we provide an alternative proof of the necessity of the first condition in \eqref{eq:conditionsmain delta} on $\T^d$ in the case of diagonal operators, therefore yielding optimality also in the case of Fourier multipliers on $\T^d$. Moreover, we provide a proof of the necessity of the second condition in \eqref{eq:conditionsmain delta} in Proposition \ref{p:necessityweirdcond}. The case of diagonal operators is interesting as a scaling argument, as in the proof of Proposition  \ref{p:necessityLinfty_hom} does not seem possible due to the different rescaling of the Fourier symbol and functions, see below \eqref{eq:byproduct_Fourier}. 
Recall that $|e_k|\equiv 1$ pointwise, and thus $\ell^\zeta(\S_f)$ coincides with the \emph{unweighted} sequence space $\ell^\zeta$ if $\Sf$ is the Fourier basis on $\T^d$.

\begin{proposition}[Necessity of the first condition in \eqref{eq:conditionsmain delta} -- Diagonal operator on $\T^d$]
\label{p:necessityLinfty}
Let $
q\in (1, \infty)$, $ \eta\in (1, \infty)$, $\zeta\in [2, \infty]$, and   $s\in(0,d)$. 
For each $\mu\in \ell^\infty$, let $R_{\mu}\in \calL(L^2(\T^d),L^2(\T^d))$ be defined by $R_{\mu} e_k = \mu_k e_k$, for   $k\in \Z^d$. Suppose that there is a constant $C>0$ such that for all $g\in C^\infty(\T^d)$ and all finitely nonzero sequences $\mu$, one has
\begin{align*}
\|M_g R_\mu \|_{\gamma(L^2(\T^d),H^{-s,q}(\T^d))}\leq C \|\mu\|_{\ell^{\zeta}} \|g\|_{L^{\eta}(\T^d)},
\end{align*}
then $\frac{s}{d} + \frac{1}{q}  \geq \frac{1}{\eta} + \frac{1}{2} - \frac{1}{\zeta}$.
\end{proposition}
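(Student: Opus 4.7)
My plan is to use the $\gamma$-Fubini identity \eqref{eq: square fct char gamma} to recast the $\gamma$-norm as an $L^q$ square function,
\begin{equation*}
\|M_g R_\mu\|_{\gamma(L^2(\T^d),H^{-s,q}(\T^d))}^2
\eqsim_q
\Bigl\| \Bigl( \sum_{k\in\Z^d} \mu_k^2 \, |(1-\Delta)^{-s/2}(g e_k)|^2 \Bigr)^{1/2} \Bigr\|_{L^q(\T^d)}^2,
\end{equation*}
and then to test against a concentrated bandlimited $g$ paired with a frequency-shell $\mu$, letting their scales go to infinity at a carefully linked rate. As noted after the statement, a literal scaling as in Proposition \ref{p:necessityLinfty_hom} is unavailable on $\T^d$; a Bernstein-type localization at Fourier scale $1/\delta$ against a physical scale $R$ will play the analogous role.

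Concretely, I would take $\mu=\mathbf{1}_S$ with $S=\{k\in\Z^d:\ R/2\leq |k|\leq R\}$, so that $|S|\eqsim R^d$ and $\|\mu\|_{\ell^\zeta}\eqsim R^{d/\zeta}$, and choose $g$ proportional to the Fej\'er kernel $F_{1/\delta}$, normalized so that $\|g\|_{L^\eta(\T^d)}\eqsim 1$. Then $\widehat g\geq 0$ is supported in $\{|n|\leq 1/\delta\}$, $g(0)\eqsim \delta^{-d/\eta}$, and $\|g\|_{L^q(\T^d)}\eqsim \delta^{d/q-d/\eta}$. The technical core is a Bernstein-type lower bound: provided $R\delta\geq c_0(s,d)$ is large enough and $|k|\in[R/2,R]$, every $m$ in the Fourier support of $g e_k$ satisfies $R^s(1+|m|^2)^{-s/2}\eqsim 1$ uniformly, so one can write
\begin{equation*}
(1-\Delta)^{-s/2}(g e_k)(x) = R^{-s}\, e_k(x)\,\widetilde g_k(x),
\qquad
\widetilde g_k(x)=\sum_{n}\alpha(k,n)\,\widehat g(n)\,e_n(x),
\end{equation*}
with $\alpha(k,n):=R^s(1+|k+n|^2)^{-s/2}$ bounded between positive constants depending only on $(s,c_0)$. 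The nonnegativity of $\widehat g$ gives $\widetilde g_k(0)\gtrsim g(0)\eqsim\delta^{-d/\eta}$, and Bernstein's inequality for $\nabla\widetilde g_k$ (whose Fourier support lies in $\{|n|\leq 1/\delta\}$) yields $\|\nabla \widetilde g_k\|_\infty\lesssim \delta^{-1}\|\widetilde g_k\|_\infty$, which by a Taylor argument produces $|\widetilde g_k(x)|\gtrsim \delta^{-d/\eta}$ on a ball $B(0,c_1\delta)$ \emph{uniformly} in $k\in S$.

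Restricting the $L^q$-integration of the square function to $B(0,c_1\delta)$ and summing over $k\in S$ then gives
\begin{equation*}
\|M_g R_\mu\|_{\gamma(L^2(\T^d),H^{-s,q}(\T^d))}^2
\gtrsim R^{-2s}\,|S|\,\|g\|_{L^q(\T^d)}^2
\eqsim R^{d-2s}\,\|g\|_{L^q(\T^d)}^2.
\end{equation*}
Combined with the assumed upper bound $\|M_g R_\mu\|_\gamma\leq C\,\|\mu\|_{\ell^\zeta}\|g\|_{L^\eta}$ and the substitution $\delta=c_0/R$, this reduces to
$R^{\,d/\eta-d/q-s+d/2-d/\zeta}\lesssim 1$
uniformly in large $R$; sending $R\to\infty$ forces the exponent to be non-positive, i.e.\ $\frac{s}{d}+\frac{1}{q}\geq \frac{1}{\eta}+\frac12-\frac{1}{\zeta}$. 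The main obstacle is the uniform Bernstein-type lower bound on $|\widetilde g_k|$ near the origin: one has to pick $c_0=c_0(s,d)$ so that the multipliers $\alpha(k,n)$ are bounded away from $0$ and $\infty$, and then $c_1=c_1(s,d)$ small enough that the Bernstein gradient bound cannot overwhelm the value at the origin, so that the lower bound survives after summing in $k$ and integrating in $x$.
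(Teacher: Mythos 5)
Your proposal is correct, and it reaches the stated necessary condition by a route that is genuinely different from the paper's, although the two arguments exploit the same underlying mechanism: for every $k$ in the support of $\mu$, the Fourier support of $g e_k$ sits in an annulus of radius comparable to the scale parameter, so the Bessel potential acts like multiplication by that radius to the power $-s$. The paper implements this by taking \emph{both} $g$ and $\mu$ supported on the same dyadic frequency box $C_N$ (so $g$ is a modulated product of Dirichlet kernels); then the frequencies of $S_N=\sum_k\gamma_k g e_k$ all lie in one Littlewood--Paley annulus, the $H^{-s,q}$-norm collapses to $2^{-Ns}\|S_N\|_{L^q}$, and the remaining $L^q$-norm is computed \emph{exactly} via the square-function identity and $|e_k|\equiv 1$, which gives $\|(\sum_{k\in C_N}|ge_k|^2)^{1/2}\|_{L^q}=|C_N|^{1/2}\|g\|_{L^q}$ with no further work. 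You instead decouple the two scales (low-frequency Fej\'er kernel for $g$, high-frequency shell for $\mu$); the price is that after applying $(1-\Delta)^{-s/2}$ the functions $\widetilde g_k$ genuinely depend on $k$, so you cannot factor out $\|g\|_{L^q}$ exactly and must substitute the physical-space localization: positivity of $\widehat g$ to pin down $\widetilde g_k(0)\gtrsim g(0)$ uniformly in $k$, Bernstein plus Taylor to propagate this to a ball of radius $\eqsim\delta$, and integration over that ball to recover $\delta^{d/q}\cdot\delta^{-d/\eta}\eqsim\|g\|_{L^q}$. Your bookkeeping is consistent (the exponent comparison $R^{d/2-s-d/q+d/\eta-d/\zeta}\lesssim 1$ is exactly the claimed inequality, including the $\zeta=\infty$ case), and the two hidden uniformities you flag --- $\alpha(k,n)\eqsim 1$ once $R\delta\geq c_0(s,d)$, and $\|\widetilde g_k\|_\infty\lesssim g(0)$ so that the Bernstein gradient bound is $O(\delta^{-1}g(0))$ --- are both easily justified from $\widehat g\geq 0$ and the boundedness of $\alpha$. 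What your approach buys is independence from the Littlewood--Paley machinery and from the exact Dirichlet-kernel norm asymptotics; what the paper's buys is a shorter, fully explicit computation with no Bernstein-type localization step.
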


\begin{proof}
Let $N \geq 1$ be an integer and define a frequency block in $\Z^d$ by
$$
C_N = \{ n \in \Z^d : 2^N \leq n_i \leq 3 \cdot 2^{N-1} \text{ for } i=1, \dots, d \}.
$$
Clearly we have $|C_N|\eqsim 2^{dN}$, for $N\geq 1$.

Let $g = \sum_{n \in C_N} e_n$ and set $\mu_n = \one_{C_N}(n)$ for $n\in \bZ^d$, so that clearly $\mu = (\mu_n)_{n\in \bZ^d}$ only has finitely many non-zero entries. 
Then
$\|\mu\|_{\ell^\zeta} =  |C_N|^{\frac{1}{\zeta}} \eqsim 2^{\frac{dN}{\zeta}}$.
Regarding the function $g$, we first rewrite it as
\begin{align}
g(x) 
&= \sum_{n \in C_N} \prod_{j=1}^d e^{2\pi i n_j x_j} = \prod_{j=1}^d \Big( \sum_{n_j=2^N}^{3 \cdot 2^{N-1}} e^{2\pi i n_j x_j} \Big) =:\prod_{j=1}^d p_j(x_j).
\end{align}
Each factor $p_j$ is a shifted one-dimensional Dirichlet kernel with $3 \cdot 2^{N-1} - 2^N+1 = 2^{N-1}+1$ terms. 
Multiplying $p_j$ by a suitable unimodular number
one can easily check that 
\begin{align}
\|p_j\|_{L^\eta(\T)}= 
\|D_{2^{N-2}}\|_{L^\eta(\bT)},
\end{align}
where $D_{2^{N-2}}$ is the Dirichlet kernel $D_{2^{N-2}}(x_j) = \sum_{m =-2^{N-2}}^{2^{N-2}} e^{2\pi i m x_j}$. It is well-known that 
\begin{align}
\|D_{2^{N-2}}\|_{L^\eta(\T)}\eqsim 2^{N(1-\frac{1}{\eta})},
\end{align}
see e.g.\ \cite[Exercise 3.1.6]{Grafakos1}.
Therefore, we have
\begin{align}
\label{eq: g C_N norm}
\|g\|_{L^{\eta}(\T^d)} = \Big\| \prod_{j=1}^d p_j\Big\|_{L^\eta(\T^d)} = \prod_{j=1}^d \|D_{2^{N-2}}\|_{L^\eta(\T)} \eqsim 2^{dN(1-\frac{1}{\eta})}
\end{align}
Combining the estimates for $\mu$ and $g$ we get
\begin{align}
\label{eq: combined estimate g mu C_N}
\|g\|_{L^{\eta}(\T^d)} \|\mu\|_{\ell^\zeta} \eqsim 2^{dN(1-\frac{1}{\eta})}  2^{\frac{dN}{\zeta}} = 2^{dN(1 - \frac{1}{\eta} + \frac{1}{\zeta})}.
\end{align}
Next, we calculate a lower bound for the $\gamma(L^2,H^{-s,q})$-norm. By definition of $R_\mu$, we get
\begin{align}
\|M_g R_\mu\|_{\gamma(L^2(\T^d), H^{-s,q}(\T^d))}^2 &= \E \Big\| \sum_{n \in C_N} \gamma_n g e_n \Big\|_{H^{-s,q}(\T^d)}^2 
\label{eq: MgTmu C_N calculation 1}
= \E \| S_N \|_{H^{-s,q}(\T^d)}^2,
\end{align}
where $S_N = \sum_{n \in C_N} \gamma_n g e_n$.
Using the definition of $g$, we observe that
\begin{align}
S_N = \sum_{k \in C_N} \gamma_k \Big(\sum_{n \in C_N} e_n \Big) e_k = \sum_{k \in C_N} \sum_{n \in C_N} \gamma_k e_{n+k}
\end{align}
The components $m_j = n_j + k_j$ are in the range 
\begin{align}
m_j\in [2^N+2^N, 3 \cdot 2^{N-1} + 3 \cdot 2^{N-1}] = [2^{N+1}, 3 \cdot 2^N],\quad j=1,\dots,d.
\end{align}
Therefore, by the Bernstein-Nikolskii type estimate (see \cite[Section 2.1.1]{BCD}) for trigonometric polynomials with frequency support in an annulus $\{m\in \Z^d: a 2^{N} \leq |m|\leq b 2^{N}\}$, we obtain
\begin{align}
    \|S_N\|_{H^{-s,q}(\T^d)} &
\label{eq: MgTmu C_N calculation 2}
    \eqsim 2^{-Ns} \|S_N\|_{L^q(\T^d)}.
\end{align}
Further, by the square function characterisation, we have
\begin{align}
\label{eq: MgTmu C_N calculation 3}
    (\E\|S_N\|_{L^q(\T^d)}^2)^{1/2}&\eqsim \Big\|\Big(\sum_{n\in C_N}|ge_n|^2\Big)^{\frac{1}{2}} \Big\|_{L^q(\T^d)} \eqsim \|g\|_{L^q(\T^d)}|C_N|^{\frac{1}{2}}  = 2^{dN( 1-\frac{1}{q})}2^{\frac{dN}{2}},
\end{align}
where we used \eqref{eq: g C_N norm} for $q$ in place of $\eta$. Combining \eqref{eq: combined estimate g mu C_N}, \eqref{eq: MgTmu C_N calculation 1}, \eqref{eq: MgTmu C_N calculation 2} and \eqref{eq: MgTmu C_N calculation 3} we then get
\begin{align}
   2^{-sN + dN( 1-\frac{1}{q} + \frac{1}{2})}\lesssim  2^{dN(1-\frac{1}{\eta}+\frac{1}{\zeta} )}.
\end{align}
Since, by assumption, this must hold for $N$ arbitrarily large, the claimed estimate follows.
\end{proof}

\begin{proposition}[Necessity of the second condition in \eqref{eq:conditionsmain delta} -- Diagonal operator on $\T^d$]\label{p:necessityweirdcond}
Let $s>0$, $q\in (1, \infty)$, $\eta\in (1, \infty)$ and $\zeta\in(2,\infty)$ be such that 
\[
        \frac{s}{d} + \frac{1}{q}  = \frac{1}{\eta} + \frac{1}{2} - \frac{1}{\zeta}
\]
For each $\mu\in \ell^\infty$, let $R_{\mu}\in \calL(L^2(\T^d),L^2(\T^d))$ be defined by $R_{\mu} e_k = \mu_k e_k$, for   $k\in \Z^d$.
Suppose that there is a constant $C$ such hat for all $g\in C^\infty(\T^d)$ and all finitely nonzero sequences $\mu$, one has
\begin{align}
\label{eq:assumption_optimality_conditions_eta_zeta12}
\|M_g R_\mu \|_{\gamma(L^2(\T^d),H^{-s,q}(\T^d))}\leq C \|\mu\|_{\ell^{\zeta}} \|g\|_{L^{\eta}(\T^d)},
\end{align}
Then $\frac1\eta-\frac1\zeta<\frac12$. 
\end{proposition}

\begin{proof}
After we showed $\frac1\eta-\frac1\zeta\leq \frac12$, we will additionally show that strict inequality is necessary. Let $(\gamma_k)_{k\in \Z}$ be independent standard Gaussian variables.

On $H^{-s,q}(\mathbb T^d)$ we use the Bessel-potential norm
\[
        \|u\|_{H^{-s,q}(\mathbb T^d)}
        =
        \|J^{-s}u\|_{L^q(\mathbb T^d)}
        \quad \text{ where }\quad
        \widehat{J^{-s}u}(m)
        =
        (1+4\pi^2|m|^2)^{-s/2}\widehat u(m).
\]
Let $N\geq 2$ and $\Lambda_N:=\{-N,\ldots,N\}^d$. Then $|\Lambda_N| \eqsim N^{d}$. Let 
\[
        g:=\sum_{n\in \Lambda_N}e_{-n} \quad \text{and} \quad \mu_k:=\mathbf 1_{\Lambda_N}(k). 
\]
Since $g$ consists of a product of one-dimensional Dirichlet kernels of length $N$, we obtain that $\|g\|_{L^\eta}\simeq N^{d(1-1/\eta)}$ (compare with \eqref{eq: g C_N norm}) and hence
\[\|\mu\|_{\ell^\zeta}\|g\|_{L^\eta} \lesssim N^{d(1-1/\eta + 1/\zeta)}.\]
A straightforward computation shows that
$\widehat{ge_k}(m) = \1_{k-m \in\Lambda_N}$
and thus
\begin{align}
\label{eq: J^s computation}
    J^{-s}(ge_k)(m) &= \sum_{m\in\bZ^d}(1+4\pi^2|m|^2)^{-s/2}\1_{k-m \in\Lambda_N}e_m
    = \sum_{n\in\Lambda_N}\widehat{K}_s(k-n) e_{k-n}.
\end{align}
where $K_s$ is the Bessel kernel associated with $J^{-s}$, i.e. $\widehat{K}_s(m)=(1+4\pi^2|m|^2)^{-s/2}$.

On the other hand, since $\int_{\T^d} e_j \, \di x=0$ for $j\neq 0$, we obtain from the above formula that
\begin{align*}
        \|M_{g}R_{\mu}\|_{\gamma(L^2,H^{-s,q})}^2
        &=
        \mathbb E
        \Bigl\|
            \sum_{k\in \Lambda_N}\gamma_k J^{-s}(ge_k)
        \Bigr\|_{L^q}^2
\\ & \geq    \mathbb E
        \Bigl|\int_{\mathbb{T}^d}
            \sum_{k\in \Lambda_N}\gamma_k J^{-s}(ge_k)\, \di x\Bigr|^2
= \mathbb E
        \Bigl|
            \sum_{k\in \Lambda_N}\gamma_k 
        \Bigr |^2 \eqsim N^{d}.
\end{align*}
Note that the assumed estimate \eqref{eq:assumption_optimality_conditions_eta_zeta12} implies $N^{d/2}\leq C N^{d(1-1/\eta + 1/\zeta)}$. Since $N$ was arbitrary, this implies $\frac1\eta-\frac1\zeta\leq \frac12$. Note that the previous argument does not require the condition $\frac{s}{d} + \frac{1}{q}  = \frac{1}{\eta} + \frac{1}{2} - \frac{1}{\zeta}$.

Next suppose that $\frac1\eta-\frac1\zeta = \frac12$. We derive a contradiction.
Let 
\[
        A_N:=\{-2N,-2N+1,\ldots,2N\}^d,
        \qquad
        B_N:=\{-\lfloor N/2\rfloor,\ldots,\lfloor N/2\rfloor\}^d .
\]
Then $|A_N|\simeq_d N^d$ and $|B_N|\simeq_d N^d$. Set
\[
        g(x):=\sum_{n\in A_N}e_{-n}(x), \ \ \text{and} \  \ \mu_k:=\mathbf 1_{B_N}(k)
\]
Then $\|\mu\|_{\ell^\zeta}=|B_N|^{1/\zeta}\simeq_d N^{d/\zeta}$.
Moreover, since $g$ is a product of one-dimensional Dirichlet kernels of length
comparable to $N$, we find again that
\[
        \|\mu\|_{\ell^\zeta}\|g\|_{L^\eta}
        \lesssim
        N^{d/\zeta+d(1-1/\eta)} = N^{d/2}.
\]
For the left-hand side by definition of the
$\gamma$-norm,
\[
\begin{aligned}
        \|M_{g}R_{\mu}\|_{\gamma(L^2,H^{-s,q})}
        &=
        \Bigl(
        \mathbb E
        \Bigl\|
            \sum_{k\in B_N}\gamma_k ge_k
        \Bigr\|_{H^{-s,q}}^2
        \Bigr)^{1/2}                                                        
        =
        \Bigl(
        \mathbb E
        \Bigl\|
            \sum_{k\in B_N}\gamma_k J^{-s}(ge_k)
        \Bigr\|_{L^q}^2
        \Bigr)^{1/2}.
\end{aligned}
\]
Let $F_N$ be the one-dimensional Fej\'er kernel (see \cite[Sec. 3.1.3]{Grafakos1}),
\[F_N(t)=\sum_{|m|\leq N}\Big(1-\frac{|m|}{N+1}\Big)e^{2\pi i mt} = \frac{1}{N+1} \frac{\sin^2(\pi(N+1)x)}{\sin^2(\pi x)},\] 
and put
$V_N(x)=\prod_{j=1}^dF_N(x_j)$. Then $V_N\geq0$ and
$\int_{\mathbb T^d}V_N\,\di x=1$. Since $V_N$ is a positive
kernel of integral one, Minkowski's inequality applied pointwise in $\Omega$ implies that
\[
\begin{aligned}
        \|M_{g}R_{\mu}\|_{\gamma(L^2,H^{-s,q})}
        &\geq
        \Bigl(
        \mathbb E
        \Bigl\|
            V_N*\sum_{k\in B_N}\gamma_k J^{-s}(ge_k)
        \Bigr\|_{L^q}^2
        \Bigr)^{1/2}.
\end{aligned}
\]
Let $k\in B_N$. Recalling \eqref{eq: J^s computation} one has
$J^{-s}(ge_k)=\sum_{n\in A_N} \widehat K_s(k-n) e_{k-n}$.
If $\widehat{V}_N(m)\neq0$, then $|m_j|<N+1$ for all $j$. Since $|k_j|\leq N/2$, it follows that $k-m\in A_N$. Hence, 
\[
        V_N*J^{-s}(ge_k)
        =
        \sum_{m\in\mathbb Z^d}\widehat{V}_N(m)\widehat{K}_s(m)  e_m=:P_N,
\]
which is independent of $k$. 
It follows that
\[
        V_N*\sum_{k\in B_N}\gamma_k J^{-s} (ge_k)
        =
        \Bigl(\sum_{k\in B_N}\gamma_k\Bigr)P_N .
\]
Consequently,
\[
\begin{aligned}
        \|M_{g}R_{\mu}\|_{\gamma(L^2,H^{-s,q})}
        &\geq
        \Bigl(\mathbb E\Bigl|\sum_{k\in B_N}\gamma_k\Bigr|^2\Bigr)^{1/2}
        \|P_N\|_{L^q}                                                       
        =
        |B_N|^{1/2}\|P_N\|_{L^q}
        \gtrsim_d
        N^{d/2}\|P_N\|_{L^q}.
\end{aligned}
\]
It remains to estimate $P_N=V_N*K_s$ from below. 
Note that
\[
        \widehat K_s(m) = (1+4\pi^2|m|^2)^{-s/2}
        =
        \frac1{\Gamma(s/2)}
        \int_0^\infty t^{s/2-1}e^{-t}e^{-4\pi^2t|m|^2}\,\di t,
\]
and therefore
\[
        K_s(x)
        =
        \frac1{\Gamma(s/2)}
        \int_0^\infty t^{s/2-1}e^{-t}p_t(x)\,\di t,
\]
where $p_t(x) = \sum_{\ell\in\mathbb Z^d}
        (4\pi t)^{-d/2}
        \exp\left(-\frac{|x+\ell|^2}{4t}\right)$ is the periodic heat kernel. 
        Since $p_t\geq0$, it follows that
$K_s\geq0$. Considering $\ell=0$, one can check that there are
constants $c>0$ and $r_0>0$ such that
\[
        K_s(x)\geq c|x|^{s-d},
        \qquad 0<|x|\leq r_0.
\]
From the definition of the Fej\'er kernel one can check that there are
constants $a,c_0>0$, depending only on $d$, such that
\[
        \int_{|y|\leq a/N}V_N(y)\,\di y\geq c_0
\]
for all $N\geq2$. If $2a/N\leq |x|\leq r_0/2$ and $|y|\leq a/N$, then
$\frac12|x|\leq |x-y|\leq \frac32|x|$. Hence, by positivity of $V_N$ and
the lower bound for $K_s$,
\[
\begin{aligned}
        P_N(x)
        &\geq
        \int_{|y|\leq a/N}V_N(y)K_s(x-y)\,\di y                                  
        \geq c|x|^{s-d}\int_{|y|\leq a/N}V_N(y)\,\di y
        \geq 
        cc_0 |x|^{s-d}.
\end{aligned}
\]
Therefore, since the equality in the Proposition statement implies $s=d(1-1/q)$,
\[
\begin{aligned}
        \|P_N\|_{L^q(\mathbb T^d)}^q
        &\geq
        \int_{\{C_0/N\leq |x|\leq r_0/2\}}|P_N(x)|^q\,\di x 
        \geq (cc_0)^q \int_{\{C_0/N\leq |x|\leq r_0/2\}}|x|^{(s-d)q}\, \di x\gtrsim
        \log N.
\end{aligned}
\]
We can conclude that
\[
        \|M_{g}R_{\mu}\|_{\gamma(L^2,H^{-s,q})}
        \gtrsim
        N^{d/2}(\log N)^{1/q}.
\]
Combining this with the upper estimate for
$\|\mu\|_{\ell^\zeta}\|g\|_{L^\eta}$, we obtain the desired contradiction by letting $N\to \infty$. 
\end{proof}

The following result is an interesting case that one can consider on sets with finite volume. It shows a self-improvement hinting towards the non-optimality of such results if the function $g$ is not present. A similar result for $\R^d$ cannot hold since $g=1$ is not in $L^{\eta}(\R^d)$. 
\begin{proposition}\label{prop:zetainftytorus}
Let $q\in (1, \infty)$ be given. The following are equivalent 
\begin{enumerate}[\rm (1)]
\item\label{it:zetainftytorus_1} The identity satisfies $\mathrm{Id}\in \gamma(L^2(\T^d), H^{-s,q}(\T^d))$.
\item\label{it:zetainftytorus_2} $s>d/2$. 
\end{enumerate}
\end{proposition}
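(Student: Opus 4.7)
The plan is to reduce the $\gamma$-radonifying property of the identity to an explicit scalar series by exploiting the fact that the Fourier basis on $\T^d$ simultaneously diagonalises the Bessel potential $(1-\Delta)^{-s/2}$ and has constant $L^{q}$-density.

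First, I would note that $\mathrm{Id}\in \gamma(L^2(\T^d), H^{-s,q}(\T^d))$ is, by the very definition of $H^{-s,q}(\T^d)$ and the ideal property \eqref{eq:gamma_ideal} applied to the isomorphism $(1-\Delta)^{s/2}:H^{-s,q}(\T^d)\to L^q(\T^d)$, equivalent to
\begin{equation}
(1-\Delta)^{-s/2}\in \gamma(L^2(\T^d),L^q(\T^d)).
\end{equation}
Next, I would take the Fourier basis $(e_k)_{k\in \Z^d}$ as a convenient orthonormal basis of $L^2(\T^d)$, for which $(1-\Delta)^{-s/2}e_k = (1+4\pi^2|k|^2)^{-s/2}e_k$ and $|e_k|\equiv 1$ pointwise on $\T^d$. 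Applying the square function characterisation \eqref{eq: square fct char gamma} of $\gamma$-radonifying operators into $L^q$ then yields
\begin{equation}
\|(1-\Delta)^{-s/2}\|_{\gamma(L^2(\T^d),L^q(\T^d))}\eqsim_q \Big\|\Big(\sum_{k\in \Z^d}(1+4\pi^2|k|^2)^{-s}|e_k|^2\Big)^{1/2}\Big\|_{L^q(\T^d)}=\Big(\sum_{k\in \Z^d}(1+4\pi^2|k|^2)^{-s}\Big)^{1/2},
\end{equation}
where the last identity uses that the integrand is constant in $x$.

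Finally, by comparison with the integral $\int_{\R^d}(1+|\xi|^2)^{-s}\, \di \xi$, the scalar series on the right is finite if and only if $s>d/2$. This gives both implications \eqref{it:zetainftytorus_1}$\Leftrightarrow$\eqref{it:zetainftytorus_2} at once, so the proof is essentially one line once the right basis has been chosen. I do not foresee a genuine obstacle: the only point that requires a small justification is the equivalence in the first display, which is simply the mapping property of $(1-\Delta)^{s/2}$ combined with the ideal property of $\gamma$-spaces.
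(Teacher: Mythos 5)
Your proof is correct, but it takes a genuinely different route from the paper. You diagonalise $(1-\Delta)^{-s/2}$ in the Fourier basis and invoke the square-function characterisation \eqref{eq: square fct char gamma}, which — since $|e_k|\equiv 1$ — collapses the $\gamma$-norm to the scalar series $\sum_{k\in\Z^d}(1+4\pi^2|k|^2)^{-s}$ and yields both implications simultaneously. The paper instead splits the two directions: sufficiency is deduced from Lemma \ref{lem:zetainfty} (taking $g=1$, $\mu_k=1$, $\eta\in(2,q)$ for $q>2$, and using $H^{-s,q}\hookrightarrow H^{-s,\wt q}$ for $\wt q\le 2$), while necessity is obtained by reducing to $q\in(1,2]$ and using the cotype~$2$ property of $H^{-s,q}(\T^d)$ to bound the $\gamma$-norm from below by $\big(\sum_k\|e_k\|_{H^{-s,q}}^2\big)^{1/2}$. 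Your computation is more elementary and sharper — it gives the $\gamma$-norm exactly up to the Khintchine constant — whereas the paper's cotype argument is softer but would survive in settings where no explicit diagonalising orthonormal basis is available. Two small points worth making explicit in a final write-up: (i) the square-function identity characterises \emph{membership} in $\gamma(L^2,L^q)$, not merely the norm, via the $\gamma$-Fubini identification $\gamma(L^2(\T^d),L^q(\T^d))\simeq L^q(\T^d;L^2(\T^d))$ — this is what makes the forward implication legitimate; (ii) the exponentials $(e_k)$ are a complex orthonormal basis, so either pass to real/complex Gaussians via \cite[Proposition 6.1.21]{HNVW2} or use the real trigonometric system, for which the pointwise sum $\sum_k|Te_k|^2$ is still constant in $x$, so nothing changes.
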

\begin{proof}
\eqref{it:zetainftytorus_2} $\Rightarrow$ \eqref{it:zetainftytorus_1}: For $q>2$ this follows from Lemma \ref{lem:zetainfty} by taking $\eta\in (2, q)$, $g=1$, $\mu_k = 1$ for all $k\in \Z^d$ (see also Remark \ref{rem:groups}). Note that $\wt{q}\in (1, 2]$ is simply a consequence of $H^{-s,q}(\T^d) \hookrightarrow H^{-s,\wt{q}}(\T^d)$ and the ideal property. 

\eqref{it:zetainftytorus_1} $\Rightarrow$ \eqref{it:zetainftytorus_2}: 
By the same embedding as before, it suffices to consider $q\in (1, 2]$. By the cotype $2$ property of $H^{-s,q}(\T^d)$ (see \cite[Corollary 7.1.5]{HNVW2}) we obtain
\[\|\mathrm{Id}\|_{\gamma(L^2(\T^d), H^{-s,q}(\T^d))}\geq c \Big(\sum_{k\in \Z^d}\|e_k\|_{H^{-s,q}(\T^d)}^2\Big)^{1/2} =  c \Big(\sum_{k\in \Z^d} (1+|k|^2)^{-s}\Big)^{1/2},\]
and thus the convergence of the latter series implies $s>d/2$.
\end{proof}

As a consequence, Theorem \ref{thm:MgTmu deltaTd} does not extend to $\zeta=\infty$ for $q\in [1, \infty), \eta\in [q,\infty]$ in the limiting case where $\frac{s}{d} +\frac1q = \frac{1}{\eta}+\frac12$ in general. Indeed, we find that $\frac{s}{d} = \frac{1}{\eta} - \frac1q +\frac12\leq \frac{1}{2}$. However, if we let $g = 1$, and $\mu_k = 1$ for all $k\geq 1$, then $I = M_g R_{\mu} \in \gamma(L^2(\T^d), H^{-s,q}(\T^d))$ is equivalent to $s>d/2$ according to Proposition \ref{prop:zetainftytorus}. This contradicts $\frac{s}{d}\leq \frac12$.

We end this subsection on the periodic setting with a remark about a corrected bound from the literature. Let $S^\zeta(L^2(\R^d))$ denote the Schatten class (see \cite[Appendix D]{HNVW1} for its definition). 

\begin{remark}
\label{r:schatter_and_heat_eq}
The following correction of \cite[(3.14)]{C03} holds for any $\zeta\in [2, \infty]$:
\begin{align}\label{eq:sharpestsemigroup}
\|S(t) M_g T\|_{S^2(L^2(\T^d))}\leq C t^{-d/4} \|g\|_{L^2(\T^d)} \|T\|_{S^\zeta(L^2(\T^d))}, \ \ t\in (0,1).
\end{align}
For simplicity, we only consider the torus, but the same holds on $\R^d$ as follows from \eqref{eq:Simon} and the sharpness argument below. To prove the bound \eqref{eq:sharpestsemigroup}, it suffices to consider $\zeta=\infty$. By the ideal property of $S^2(L^2(\T^d))$, it suffices to prove the bound $\|S(t) M_g\|_{S^2(L^2(\T^d))}\leq C t^{-d/4}\|g\|_{L^2(\T^d)}$. Let $(e_k)_{k\in \Z^d}$ be the trigonometric system. Then $S(t) e_{k} = e^{-\lambda_{k} t}e_k$ with $\lambda_k\eqsim k^2$. Thus, we can write 
\begin{align*}
\|S(t) M_g\|_{S^2(L^2(\T^d))}^2 &= \sum_{k,\ell\geq 1}  |(S(t) M_g e_k, e_{\ell})|^2 
 \\ & = \sum_{\ell\geq 1} e^{-2\lambda_{\ell} t} \|M_g e_{\ell}\|_{L^2(\T^d)}^2 
 \leq \|g\|_{L^2(\T^d)}^2 \sum_{\ell\geq 1} e^{-2\lambda_{\ell} t} 
 \leq C t^{-d/2} \|g\|_{L^2(\T^d)}^2.
\end{align*}
Next, we prove that the power $d/4$ in the factor $t^{-d/4}$ in \eqref{eq:sharpestsemigroup} is sharp. To this end, it suffices to consider $\zeta=2$.  Let $T = e_0\otimes f$ with $f = \sqrt{k_t}$, where $k_t$ is the heat kernel. Then $\|f\|_{L^2(\T^d)} = 1$. Letting $g = f$, we obtain
\[\|S(t) M_g T\|_{S^2(L^2(\T^d))} = \|k_t* k_t\|_{L^2(\T^d)} = \|k_{2t}\|_{L^2(\T^d)}\eqsim t^{-d/4}, \ \ t\in (0,1).\]
On the other hand $\|g\|_{L^2(\T^d)} \|T\|_{S^2(L^2(\T^d))} = \|g\|_{L^2(\T^d)}\|f\|_{L^2(\T^d)} =1$. 
\end{remark}

\section{Sobolev embeddings for unweighted Gaussian sequences}
\label{s:weight_necessary}

In this section, we continue our investigation on Sobolev embeddings for Gaussian sequences, where $\mu$ is now taken from the \emph{unweighted} sequence space $\ell^\zeta$. For instance, this covers the case where $f_n\not\in L^\infty(\cO)$.
As commented in Remark \ref{r:schatter_and_heat_eq}, spaces of this type have already been used in the literature. However, it will turn out that it is \emph{not}  possible to prove a result under the sharp condition in Theorem \ref{thm:MgTmu delta} if $\zeta<\infty$, i.e.
\begin{equation}
\label{eq:schatten_section_sharp_condition}
\frac{s}{d} + \frac{1}{q}  = \frac{1}{\eta} + \frac{1}{2} - \frac{1}{\zeta}.
\end{equation}
In particular, the corresponding result does not capture the scaling of the stochastic heat equation, see Subsection \ref{ss:scaling_intro}.
Here, similar to the weighted case described in Subsection \ref{ss:proof_strategy}, the main step is a bilinear interpolation argument (see also Subsection \ref{ss:Bird} for the abstract viewpoint).

\smallskip

We begin by describing the setting. 
Let $(e_n)_{n\geq 1}$ and $(f_n)_{n\geq 1}$ be orthonormal systems in $L^2(\cO)$. As in the previous section, we will prove estimates for
$\|M_g R_{\mu}\|_{\g(L^2(\Domm),L^q(\Domm))}$ where $M_g$ is the multiplication operator by $g\in L^\eta(\Domm)$, and $R_{\mu}$ admits the decomposition 
\begin{equation}
\label{eq:T_decomposition_schatten_section}
R_{\mu}=\sum_{n\geq 1} \mu_n\, e_n\otimes f_n.
\end{equation}
We consider the standard $\ell^\zeta$ norm on the sequence $\mu$. 

\smallskip

The following result provides an estimate for non-trace class noise in the case when the information $\mu\in \ell^\zeta(\Sf)$ for an orthonormal system $\Sf  = (f_n)_{n\geq 1}$ is \textit{not} available, but instead only $\mu\in\ell^\zeta$ is known. It is clear that, due to the missing factor $-\tfrac{1}{\zeta}$ in the first inequality in \eqref{eq:parametersnew} below (compare to \eqref{eq:conditionsmain delta}), this loss of information also  results in a loss of regularity.

\begin{theorem}[Embeddings for Gaussian series with unweighted sequences]
\label{thm:generalONB}
Let $\Domm$ be an open set in $\R^d$.
Let $s>0$, $q\in (1, \infty)$, $\eta\in [2, \infty)$, $\zeta\in [2, \infty)$ be such that
\begin{align}
\label{eq:parametersnew}
\frac{s}{d} + \frac{1}{q} \geq \frac{1}{\eta} +\frac12\qquad \text{and}  \qquad  \frac{1}{\eta} +\frac{1}{\zeta}>\frac{1}{q}.
\end{align}
Then for each $g\in L^{\eta}(\mathcal{O})$ and $\mu \in \ell^\zeta$, the operator $M_g R_{\mu}:L^2(\mathcal{O})\to \wt{H}^{-s,q}(\mathcal{O})$ is $\gamma$-radonifying and 
\[\|M_g R_{\mu}\|_{\gamma(L^2(\mathcal{O}),\wt{H}^{-s,q}(\mathcal{O}))}
\lesssim_{d,s,q,\eta,\zeta}
         \|g\|_{L^\eta(\cO)} \|\mu\|_{\ell^\zeta}  .
\]
\end{theorem}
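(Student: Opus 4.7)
The plan is to proceed by bilinear complex interpolation, in the spirit of the proof of Theorem~\ref{thm:MgTmu delta} and the abstract viewpoint of Proposition~\ref{prop:abstract}. I would view $(g,\mu) \mapsto M_g R_\mu$ as bilinear between suitable scales, identify two endpoint estimates at $\zeta_0 = 2$ and $\zeta_1 = \infty$, and then apply Proposition~\ref{prop: interpolation} with the trivial weight $w \equiv 1$ on the sequence spaces (since we now work with the unweighted $\ell^\zeta$) and interpolation parameter $\theta = 1 - 2/\zeta \in [0,1)$. As usual, extending $g$ and the $(e_n), (f_n)$ by zero reduces the argument to $\cO = \R^d$ whenever convenient.

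The $\zeta_0 = 2$ endpoint is elementary. For $\mu \in \ell^2$, the operator $R_\mu$ is Hilbert--Schmidt on $L^2(\cO)$ with $\|R_\mu\|_{\cL_2(L^2(\cO))} = \|\mu\|_{\ell^2}$, and $\cL_2(L^2(\cO)) = \gamma(L^2(\cO), L^2(\cO))$. The ideal property \eqref{eq:gamma_ideal} then reduces matters to proving $M_g \in \cL(L^2(\cO), H^{-s_0, q_0}(\cO))$ with norm controlled by $\|g\|_{L^{\eta_0}(\cO)}$. H\"older's inequality gives $M_g \in \cL(L^2(\cO), L^r(\cO))$ with $\tfrac{1}{r} = \tfrac{1}{\eta_0} + \tfrac{1}{2}$ whenever $\eta_0 \geq 2$, and the Sobolev embedding $L^r(\cO) \embed H^{-s_0, q_0}(\cO)$ holds under $\tfrac{s_0}{d} + \tfrac{1}{q_0} \geq \tfrac{1}{\eta_0} + \tfrac{1}{2}$ together with $\tfrac{1}{\eta_0} + \tfrac{1}{2} \geq \tfrac{1}{q_0}$. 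The $\zeta_1 = \infty$ endpoint is precisely Lemma~\ref{lem:zetainfty} applied to $R_\mu \in \cL(L^2(\cO))$, using $\|R_\mu\|_{\cL(L^2)} = \|\mu\|_{\ell^\infty}$; it is available under $s_1 \in (d/2, d)$, $2 < \eta_1 < q_1 < \infty$, and $\tfrac{s_1}{d} + \tfrac{1}{q_1} \geq \tfrac{1}{\eta_1} + \tfrac{1}{2}$.

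The main obstacle, as in the proof of Theorem~\ref{thm:MgTmu delta}, is the algebraic matching step: given $(s, q, \eta, \zeta)$ satisfying \eqref{eq:parametersnew}, one must exhibit admissible endpoint tuples $(s_0, q_0, \eta_0)$ and $(s_1, q_1, \eta_1)$ satisfying the sharp scaling $\tfrac{s_i}{d} + \tfrac{1}{q_i} = \tfrac{1}{\eta_i} + \tfrac{1}{2}$ at each endpoint together with
\begin{align*}
\tfrac{1}{\eta} = \tfrac{1-\theta}{\eta_0} + \tfrac{\theta}{\eta_1}, \qquad \tfrac{1}{q} = \tfrac{1-\theta}{q_0} + \tfrac{\theta}{q_1}, \qquad s = (1-\theta)s_0 + \theta s_1.
\end{align*}
Interpolation of the sharp-scaling identities yields $\tfrac{s}{d} + \tfrac{1}{q} = \tfrac{1}{\eta} + \tfrac{1}{2}$, which is the critical case of the first condition in \eqref{eq:parametersnew}; the non-critical case then follows via the trivial embedding $H^{-s',q}(\cO) \embed H^{-s,q}(\cO)$ for $s \geq s'$. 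The strict second condition $\tfrac{1}{\eta} + \tfrac{1}{\zeta} > \tfrac{1}{q}$ is precisely what accommodates the strict inequality $\eta_1 < q_1$ at the $\zeta_1 = \infty$ endpoint: combining the interpolation relations with the sharp scaling, one obtains
\begin{align*}
\tfrac{1}{\eta} + \tfrac{1}{\zeta} - \tfrac{1}{q} = (1-\theta)\bigl(\tfrac{1}{\eta_0} + \tfrac{1}{2} - \tfrac{1}{q_0}\bigr) + \theta\bigl(\tfrac{1}{\eta_1} - \tfrac{1}{q_1}\bigr),
\end{align*}
so strict positivity of the left-hand side opens up the range needed to arrange $\tfrac{1}{\eta_1} > \tfrac{1}{q_1}$ demanded by Lemma~\ref{lem:zetainfty}. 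The careful but routine bookkeeping to verify that the admissible region for $(s_1, q_1, \eta_1)$ is non-empty under \eqref{eq:parametersnew} proceeds in direct analogy with the manipulations \eqref{eq:cond1}--\eqref{eq:cond5} in the proof of Theorem~\ref{thm:MgTmu delta}, and is where I expect the bulk of the technical work to lie.
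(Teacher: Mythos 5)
Your proposal is correct and follows essentially the same route as the paper: bilinear complex interpolation via Proposition \ref{prop: interpolation} between a $\zeta=2$ endpoint and the $\zeta=\infty$ endpoint of Lemma \ref{lem:zetainfty}, where your Hilbert--Schmidt/ideal-property derivation of the $\zeta=2$ endpoint is an equivalent repackaging of the paper's Lemma \ref{lem:parameternew} (Sobolev embedding, H\"older, orthogonality), and your identity expressing $\tfrac{1}{\eta}+\tfrac{1}{\zeta}-\tfrac{1}{q}$ in terms of the endpoint gaps is exactly what drives the paper's feasibility analysis. The one step you defer --- verifying that the admissible region for $(s_1,q_1,\eta_1)$ is nonempty under \eqref{eq:parametersnew} --- is carried out in the paper through the constraints \eqref{eq:restrqeta2new}--\eqref{eq:restrqnew2} and works out as you anticipate.
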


Note that in case of equality in  the first part of \eqref{eq:parametersnew}, the second part of \eqref{eq:parametersnew} can be restated as $\frac{s}{d}>\frac{1}{2} - \frac{1}{\zeta}$. This shows that $\mu \in \ell^\zeta$ for smaller $\zeta$ (in particular compared to $\zeta = \infty$) allows considering a smaller $s$.

\smallskip

As in Sections \ref{s:gamma_consequences} and \ref{s:main_result}, the above result and Proposition \ref{p:necessary_schatten} admit a variant where $\wt{H}^{-s,q}(\Domm)$ is replaced by the homogeneous spaces $\dot{H}^{-s,q}(\R^d)$. We leave the details to the interested reader.

\smallskip

In the following, we state a converse result, which shows again that the information $\mu\in\ell^\zeta$ alone is not so useful. Indeed,
if $\mu\in\ell^\zeta$ and $g\in L^\eta(\Domm)$, then the conditions ensuring $M_g R_{\mu}\in \g(L^2(\Domm),L^q(\Domm))$ are \emph{independent} of $\zeta$. In particular, choosing the corresponding sequence $\mu$ from the unweighted space $\ell^\zeta$ does \emph{not} allow one to recover a condition like \eqref{eq:schatten_section_sharp_condition}, and therefore to capture scaling properties of SPDEs as analyzed in Subsection \ref{ss:scaling_intro}.

\begin{proposition}[Necessary conditions in the unweighted setting]
\label{p:necessary_schatten}
Let $\Domm$ be an open set in $\R^d$.
    Let $s>0$, $q\in(1,\infty)$, $\eta\in [2,\infty)$ and $\zeta\in [2,\infty)$. Suppose that, for all $g\in L^\eta(\cO)$ and $\mu\in \ell^\zeta$,
    \begin{align}
    \label{eq: necessity 2 parameters general ONB}
        \|M_gR_{\mu}\|_{\gamma(L^2(\cO),H^{-s,q}(\cO))}
        \lesssim \|g\|_{L^\eta(\mathcal{O})} \|\mu\|_{\ell^\zeta},
	 \end{align}
	 where $\mu$ is the sequence corresponding to $R_{\mu}$ as in \eqref{eq:T_decomposition_schatten_section}.
    Then the first part of \eqref{eq:parametersnew} holds.  
    Moreover, if $\Domm=\R^d$, then the second part of \eqref{eq:parametersnew} holds as well where we allow equality as well. 
\end{proposition}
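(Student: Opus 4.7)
My plan is to derive each inequality in \eqref{eq:parametersnew} by testing \eqref{eq: necessity 2 parameters general ONB} against a specifically designed choice of $(g,\mu,(e_n),(f_n))$ and extracting the condition via a scaling argument. For the first inequality (valid for arbitrary open $\Domm$), fix a ball $B := B_r(x_0)\subseteq \Domm$ and pick $e\in C^\infty_{{\rm c}}(B)$ with $\|e\|_{L^2(\Domm)}=1$ together with any $h\in C^\infty_{{\rm c}}(B)$; setting $\mu_1=\|h\|_{L^2(\Domm)}$, $\mu_n=0$ for $n\geq 2$, and completing $\{e\},\{h/\|h\|_{L^2}\}$ to orthonormal systems $(e_n),(f_n)$ makes $M_g R_\mu$ the rank-one operator $v\mapsto (v,e)_{L^2}\,gh$, whose $\gamma$-norm equals $\|gh\|_{H^{-s,q}(\Domm)}$, while $\|\mu\|_{\ell^\zeta}=\|h\|_{L^2(\Domm)}$. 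The hypothesis therefore reduces to the pointwise-multiplication estimate
\[
\|gh\|_{H^{-s,q}(\Domm)} \lesssim \|g\|_{L^\eta(\Domm)}\|h\|_{L^2(\Domm)},\qquad g,h\in C^\infty_{{\rm c}}(B),
\]
which is precisely \eqref{eq:estimate_full_space_one_side} with $2/\zeta$ replaced by $0$. The same zero-extension followed by one-sided scaling argument as in Proposition \ref{p:necessityLinfty_hom}, applied with $(g,h)\mapsto (g(\lambda\,\cdot),h(\lambda\,\cdot))$ for $\lambda=2^n\to\infty$, yields $-s-d/q\leq -d/\eta-d/2$, i.e.\ the first condition.

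For the second inequality, assume $\Domm=\R^d$ and let $\Psi\in L^2(\R^d)$ be a compactly supported orthonormal mother wavelet for which $\{\psi_{j,k}(x):=2^{jd/2}\Psi(2^j x-k)\}_{k\in\Z^d}$ is orthonormal at each level $j\geq 0$. For integers $N,j\geq 1$, choose $k_1,\dots,k_N\in\Z^d$ spaced widely enough for the translates $f_n:=\psi_{j,k_n}$ to have pairwise disjoint supports packed inside a single ball $B_R$ of radius $R\eqsim 2^{-j}N^{1/d}$. Take $e_n:=f_n$, $\mu_n=1$ for $n\leq N$ and $\mu_n=0$ otherwise, so that $\|\mu\|_{\ell^\zeta}=N^{1/\zeta}$, and pick $g\in C^\infty_{{\rm c}}(\R^d)$ with $g\equiv 1$ on $B_R$ and $\|g\|_{L^\eta(\R^d)}\lesssim R^{d/\eta}\lesssim 2^{-jd/\eta}N^{1/\eta}$. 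Since $gf_n=f_n$, the $\gamma$-norm becomes $\|M_g R_\mu\|_\gamma^2=\E\|\sum_{n=1}^N \gamma_n f_n\|_{H^{-s,q}(\R^d)}^2$. The (near-)Fourier localization of $f_n$ to the annulus $\{|\xi|\eqsim 2^j\}$ yields, via Littlewood--Paley, $\|\sum_n \gamma_n f_n\|_{H^{-s,q}}\eqsim 2^{-js}\|\sum_n \gamma_n f_n\|_{L^q}$, and the square function characterization \eqref{eq: square fct char gamma} together with the disjointness of supports gives $(\E\|\sum_n \gamma_n f_n\|_{L^q}^2)^{1/2}\eqsim N^{1/q}\,2^{jd(1/2-1/q)}$. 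Substituting these estimates into \eqref{eq: necessity 2 parameters general ONB} and rearranging leaves
\[
N^{\frac{1}{q}-\frac{1}{\eta}-\frac{1}{\zeta}} \lesssim 2^{\,j\,[s-d(\frac{1}{\eta}+\frac{1}{2}-\frac{1}{q})]}
\]
for all $N,j\geq 1$; the $j$-exponent is non-negative by the first condition, so fixing $j$ and sending $N\to\infty$ forces $\frac{1}{q}\leq \frac{1}{\eta}+\frac{1}{\zeta}$.

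The main obstacle is to upgrade this non-strict bound to the strict inequality claimed in \eqref{eq:parametersnew}: at the critical equality $\frac{1}{q}=\frac{1}{\eta}+\frac{1}{\zeta}$ the $N$-exponent vanishes, and if the first condition is also saturated the $j$-exponent vanishes too, rendering the simple two-parameter scaling silent. I expect to close the gap by refining the lower estimate on the Gaussian sum $\sum_n \gamma_n f_n$ at the critical exponent to expose a logarithmic loss characteristic of borderline wavelet/square-function inequalities -- typically a factor $(\log N)^{c}$ with $c>0$ on the left-hand side -- which would contradict the assumed plain $\lesssim$ bound as $N\to\infty$. Working out this endpoint refinement cleanly is the technically most delicate part of the argument; the remainder is an assembly of the scaling asymptotics already sketched.
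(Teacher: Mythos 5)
Your first part coincides with the paper's Step 1: the same rank-one reduction to $\|gh\|_{H^{-s,q}(\Domm)}\lesssim\|g\|_{L^\eta}\|h\|_{L^2}$ followed by the zero-extension and dyadic scaling argument from Proposition \ref{p:necessityLinfty_hom}, and it is correct as written.

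For the second condition, the paper uses a simpler construction than yours: unit-scale translated bumps $e_k=\phi(\cdot-k)$, $k\in\Z^d$, with $g=\sum_{1\leq|k|\leq N}\psi(\cdot-k)$, $\psi\equiv 1$ on $\supp\phi$, and then bounds the square function from below by restricting to the disjoint cubes $(0,1)^d+k$, giving $N^{d/q}\lesssim N^{d/\eta}N^{d/\zeta}$ directly --- no frequency localization is needed. Your multi-scale wavelet version introduces the parameter $j$ only to fix it at the end, so it buys nothing over the $j=0$ case; worse, the step $\|\sum_n\gamma_n f_n\|_{H^{-s,q}}\eqsim 2^{-js}\|\sum_n\gamma_n f_n\|_{L^q}$ is not justified for \emph{compactly supported} wavelets, whose Fourier transforms are not supported in an annulus, so the Littlewood--Paley sum does not collapse to a single block as you claim. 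If you simply take $j=0$ and nonnegative bumps, this issue disappears and your computation reduces to the paper's.

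Concerning the ``main obstacle'' you identify: the paper's own proof establishes only the non-strict inequality $\frac{1}{\eta}+\frac{1}{\zeta}\geq\frac{1}{q}$ (its Step 2 is explicitly headed with the weak inequality), so the logarithmic endpoint refinement you propose is not needed to match the paper's argument, and the strict form stated in \eqref{eq:parametersnew} is in fact not what either proof delivers. You should therefore not invest effort in the speculative $(\log N)^c$ improvement; the intended conclusion is the weak inequality, and your construction (at $j=0$) already yields it.
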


Note that the condition \eqref{eq:parametersnew} excludes the sharp case \eqref{eq:schatten_section_sharp_condition}, which comes out from the scaling analysis on the stochastic heat equations with non-trace class noise, see Subsection \ref{ss:scaling_intro}. In particular, the Schatten class is not the appropriate class for obtaining sharp bounds on $M_g T $.

The independence of $\zeta$ in the first part of \eqref{eq:parametersnew} shows that the estimate \eqref{eq: necessity 2 parameters general ONB} becomes better as $\zeta\to \infty$. However, in the case $\zeta=\infty$, Proposition \ref{prop:M_g_Sobolev} and \ref{lem:zetainfty} already give the estimate \eqref{eq: necessity 2 parameters general ONB} for $\mu\in \ell^\infty$. Therefore, from this perspective, knowing that $\mu\in \ell^\zeta$ does not allow for improving the bounds on $M_g R_{\mu}$ given in Theorem \ref{thm:MgTmu delta}. The only exception, which we do not expect to be important in applications to SPDEs, is that the second condition in \eqref{eq:parametersnew} becomes less restrictive as $\zeta$ gets smaller, and allows for $\eta\geq q$.

\smallskip

The proofs of Theorem \ref{thm:generalONB} and Proposition \ref{p:necessary_schatten} are given in Subsections \ref{ss:generalONB_proof} and \ref{ss:necessary_schatten_proof}  below, respectively.

\subsection{Proof of Theorem \ref{thm:generalONB}}
\label{ss:generalONB_proof}
Here, we argue as in the proof of Theorem \ref{thm:MgTmu delta} given in Subsection \ref{ss:proof_generalized_growth}. As in the latter, we use bilinear interpolation. For the endpoint case $\zeta=\infty$, we will again use Lemma \ref{lem:zetainfty} (that is also contained in Theorem \ref{thm:MgTmu delta}), where one needs the following as a replacement of Lemma \ref{lem:l2 2 delta} concerning the endpoint case $\zeta=2$.  
\begin{lemma}
\label{lem:parameternew}
Let $\Domm$ be an open set in $\R^d$. 
Suppose that
\begin{align}
\label{eq:parametersnew lemma}
s\geq 0, \quad q\in (1, \infty), \quad   \frac{s}{d} + \frac{1}{q} = \frac{1}{\eta}+\frac12, \quad  \eta\in [2, \infty), 
\end{align}
Then for each $g\in L^{\eta}(\mathcal{O})$ and $\mu\in \ell^2$, the operator $M_g R_{\mu}:L^2(\mathcal{O})\to H^{-s,q}(\mathcal{O})$ is $\gamma$-radonifying and 
\[\|M_g R_{\mu}\|_{\gamma(L^2(\mathcal{O}),H^{-s,q}(\mathcal{O}))}\lesssim \|g\|_{L^\eta(\mathcal{O})} \|\mu\|_{\ell^2},\]
where the constant depends only on the parameters $(d,s,q,\eta)$. 
\end{lemma}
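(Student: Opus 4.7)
The plan is to exploit the fact that, in this endpoint $\zeta = 2$ situation, the sequence condition $\mu \in \ell^2$ together with the orthonormality of $(e_n)_{n\geq 1}$ and $(f_n)_{n\geq 1}$ forces the operator $R = R_\mu$ in \eqref{eq:T_decomposition_schatten_section} to be Hilbert--Schmidt on $L^2(\mathcal{O})$. More precisely, for any orthonormal basis $(h_k)_{k\geq 1}$ of $L^2(\mathcal{O})$, a direct computation using Parseval's identity gives
\begin{equation*}
\sum_{k\geq 1} \|R h_k\|_{L^2(\mathcal{O})}^2 = \sum_{n\geq 1} |\mu_n|^2 \|f_n\|_{L^2(\mathcal{O})}^2 = \|\mu\|_{\ell^2}^2,
\end{equation*}
so $R \in \mathcal{L}_2(L^2(\mathcal{O})) = \gamma(L^2(\mathcal{O}), L^2(\mathcal{O}))$ with $\|R\|_{\gamma(L^2(\mathcal{O}), L^2(\mathcal{O}))} = \|\mu\|_{\ell^2}$ (cf. Subsection \ref{sss:gamma_spaces}).

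Next, I would factor the operator $M_g R$ through a single Lebesgue space. Set $r \in (1,2]$ via
\begin{equation*}
\frac{1}{r} = \frac{1}{\eta} + \frac{1}{2},
\end{equation*}
so that $r \in [1,2]$ for $\eta \in [2,\infty)$. By H\"older's inequality, the multiplication operator $M_g$ satisfies
\begin{equation*}
\|M_g\|_{\mathcal{L}(L^2(\mathcal{O}), L^r(\mathcal{O}))} \leq \|g\|_{L^\eta(\mathcal{O})}.
\end{equation*}
Under the scaling identity $\frac{s}{d} + \frac{1}{q} = \frac{1}{\eta} + \frac{1}{2} = \frac{1}{r}$ assumed in \eqref{eq:parametersnew lemma}, the Sobolev embedding $L^r(\mathcal{O}) \hookrightarrow H^{-s,q}(\mathcal{O})$ holds (this follows by duality from $H^{s,q'}(\mathcal{O}) \hookrightarrow L^{r'}(\mathcal{O})$, or directly from the embeddings recalled in Subsection \ref{subs:func}). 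Composing, $M_g \in \mathcal{L}(L^2(\mathcal{O}), H^{-s,q}(\mathcal{O}))$ with norm bounded by $C \|g\|_{L^\eta(\mathcal{O})}$.

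Finally, I would invoke the (left-)ideal property of $\gamma$-radonifying operators \eqref{eq:gamma_ideal}: since $R \in \gamma(L^2(\mathcal{O}), L^2(\mathcal{O}))$ and $M_g \in \mathcal{L}(L^2(\mathcal{O}), H^{-s,q}(\mathcal{O}))$, the composition $M_g R$ lies in $\gamma(L^2(\mathcal{O}), H^{-s,q}(\mathcal{O}))$ with
\begin{equation*}
\|M_g R\|_{\gamma(L^2(\mathcal{O}), H^{-s,q}(\mathcal{O}))} \leq \|M_g\|_{\mathcal{L}(L^2(\mathcal{O}), H^{-s,q}(\mathcal{O}))}\|R\|_{\gamma(L^2(\mathcal{O}), L^2(\mathcal{O}))} \lesssim \|g\|_{L^\eta(\mathcal{O})}\|\mu\|_{\ell^2},
\end{equation*}
which is the required bound. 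There is no real obstacle here: the proof is essentially free once one recognizes that, at $\zeta = 2$, the operator $R$ is automatically Hilbert--Schmidt regardless of any $L^\infty$-information on the system $(f_n)$, so the argument proceeds via Sobolev embedding and the ideal property alone, without needing any of the weak-type machinery from Section \ref{s:gamma_consequences}.
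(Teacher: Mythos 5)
Your proof is correct and is essentially the paper's argument in a slightly repackaged form: the paper fixes the same exponent $\frac1p=\frac1\eta+\frac12=\frac{s}{d}+\frac1q$, uses the Sobolev embedding $L^p(\mathcal{O})\hookrightarrow H^{-s,q}(\mathcal{O})$, and then applies H\"older and orthogonality directly to the Gaussian sum $g\sum_n\gamma_n\mu_n f_n$, which is exactly your observation that $R_\mu\in\gamma(L^2,L^2)=\mathcal{L}_2(L^2)$ with norm $\|\mu\|_{\ell^2}$ combined with the ideal property. The ingredients and the factorization are identical, so this is the same proof.
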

\begin{proof}
Let $\frac{1}{p} = \frac{s}{d} + \frac{1}{q} =\frac{1}{\eta}+\frac{1}{2}$. Note that by Sobolev embedding and H\"older's inequality 
\begin{align*}
  \|M_g R_{\mu}\|_{\gamma(L^2(\mathcal{O}),H^{-s,q}(\mathcal{O}))} & \lesssim \|M_g R_{\mu}\|_{\gamma(L^2(\mathcal{O}),L^p(\mathcal{O}))} \\ 
  & =  \Big\|g \sum_{n\geq 1} \gamma_n \mu_n f_n \Big\|_{L^2(\Omega;L^p(\mathcal{O}))}\\ 
  & \leq \|g\|_{L^\eta(\mathcal{O})} \Big\|\sum_{n\geq 1} \gamma_n \mu_n f_n \Big\|_{L^2(\Omega;L^2(\mathcal{O}))} 
 =\|g\|_{L^\eta(\mathcal{O})} \|\mu\|_{\ell^2},
\end{align*}
where we used orthogonality in the last line.  
\end{proof}

\begin{proof}[Proof of Theorem \ref{thm:generalONB}]
Note that the case $\zeta = 2$ is covered by Lemma \ref{lem:parameternew}.
To prove the result, we use multilinear complex interpolation in the parameters $(s, q, \eta, \zeta)$ in a similar way as in Theorem \ref{thm:MgTmu delta}, i.e.\ by applying Proposition \ref{prop: interpolation}.
Note that Lemma \ref{lem:parameternew} gives the result for the parameter $(s_0, q_0, \eta_0, \zeta_0)$ with $\zeta_0 = 2$. Moreover, Lemma \ref{lem:zetainfty} gives the result for $(s_1, q_1, \eta_1, \zeta_1)$ with $\zeta_1=\infty$, where the second condition in \eqref{eq:parametersnew} is automatically satisfied due to
\begin{align}
\label{eq:sharp equality s_1}
    \frac{1}{q_1}<\frac{s_1}{d}+\frac{1}{q_1} = \frac{1}{\eta_1} + \frac{1}{2}.
\end{align}
Let $(s, q, \eta, \zeta)$ be as in the theorem statement. As before \eqref{eq:cond1}, \eqref{eq:cond2}, \eqref{eq:cond3}, \eqref{eq:cond4} need to hold, but here, \eqref{eq:cond5} needs to be replaced by 
\begin{align}
\label{eq:cond5new}
\frac{s_0}{d} + \frac{1}{q_0} = \frac{1}{\eta_0} +\frac12.
\end{align}
This again gives a way to write the unknowns $(s_0, q_0, \eta_0)$ and $(s_1, q_1, \eta_1)$ in terms of $(q_1, \eta_1)$.
Since $\zeta_0 = 2$ and $\zeta_1=\infty$, we again have $\theta = 1-\frac{2}{\zeta}$ as in \eqref{eq:condtheta}, where $\zeta\in (2, \infty)$. 
First, we note that due to \eqref{eq:sharp equality s_1} and \eqref{eq:cond5new}, the first condition in \eqref{eq:parametersnew} is satisfied.
It remains to check that under the second condition in \eqref{eq:parametersnew}, there exist $q_0\in [1, \infty)$, $\eta_0\in [2, \infty)$, $s_0\geq 0$, $s_1>0$, for  $(\eta_1, q_1)$ satisfying $2<\eta_1<q_1$, where $(s_0,q_0,\eta_0,\zeta_0)$ satisfy \eqref{eq:parametersnew lemma} and $(s_1,q_1,\eta_1,\zeta_1)$ satisfy \eqref{eq:boundHsqsharp}.

Due to \eqref{eq:cond2}, the conditions $0<\frac{1}{\eta_0}\leq \frac12$ are equivalent to 
\begin{align}
\label{eq:restrqeta2new}
\frac{1}{\theta \eta} - \frac{1}{2\theta}+\frac12\leq \frac{1}{\eta_1}<\frac{1}{\theta \eta}.
\end{align} 
Due to \eqref{eq:cond1}, the conditions $0<\frac{1}{q_0}\leq 1$ are equivalent to 
\begin{align}
\label{eq:restrqeta3new}
\frac{1}{\theta q} - \frac{1}{\theta}+1\leq\frac{1}{q_1}<\frac{1}{\theta q}.
\end{align} 
Due to \eqref{eq:cond5new}, $s_0\geq 0$ is equivalent to $\frac{1}{2} + \frac{1}{\eta_0}- \frac{1}{q_0}\geq 0$. Due to \eqref{eq:cond1} and \eqref{eq:cond2} this can be rewritten as
\begin{align}\label{eq:restrqeta4new}
\frac{1}{\eta_1}\leq \frac{1}{2\theta} - \frac{1}{2} + \frac{1}{\theta \eta} - \frac{1}{\theta q} + \frac{1}{q_1}. 
\end{align}
Finally, from \eqref{eq:cond3}, we see that $s_1>0$ follows from $q_1>\eta_1$. 
The existence of an admissible $\eta_1$ satisfying  $2<\eta_1<q_1$ together with \eqref{eq:restrqeta2new} and \eqref{eq:restrqeta4new} is then equivalent to 
\begin{align}
\label{eq:restreta_1 combined}
\max\Big\{\frac{1}{q_1}, \frac{1}{\theta \eta} - \frac{1}{2\theta}+\frac12\Big\}<\min\Big\{\frac{1}{\theta \eta}, \frac{1}{2\theta} - \frac{1}{2} + \frac{1}{\theta \eta} - \frac{1}{\theta q} + \frac{1}{q_1}, \frac12\Big\}.
\end{align}
For the right-hand side above to be positive for any $q_1>1$, we require that 
\begin{align}
\label{eq:restrqnew}
\frac{1}{2\theta} - \frac{1}{2} + \frac{1}{\theta \eta} - \frac{1}{\theta q}>0,
\end{align}
Hence, if \eqref{eq:restrqnew} holds,
we can find an admissible $q_1$ such that \eqref{eq:restrqeta3new} holds 
if
\begin{align}\label{eq:restrqnew2}
1 - \frac{1}{\theta} +\frac{1}{\theta q} <\min\Big\{\frac12,\frac{1}{\theta \eta}, \frac{1}{\theta q}\Big\}.
\end{align}
One can check that \eqref{eq:restrqnew} is equivalent to the second condition in \eqref{eq:parametersnew}, due to $\theta = 1-\tfrac{2}{\zeta}$. Moreover, 
the first part of the minimum in \eqref{eq:restrqnew2} leads to $\frac{1}{q} - \frac{1}{\zeta} <\frac12$ which is also true due to the second condition in \eqref{eq:parametersnew} and $\eta\geq 2$. The second part of the minimum in \eqref{eq:restrqnew2} leads to $\frac{1}{\eta}  +\frac{2}{\zeta} > \frac{1}{q}$ which also holds due to the second condition in \eqref{eq:parametersnew}. The third part of the minimum does not lead to any restrictions. 
\end{proof}

\subsection{Proof of Proposition \ref{p:necessary_schatten}}
\label{ss:necessary_schatten_proof}

\begin{proof}[Proof of Proposition \ref{p:necessary_schatten}]
We divide the proof into two steps.

\smallskip

\emph{Step 1: Necessity of $\frac{s}{d} + \frac{1}{q} \geq  \frac{1}{\eta} +\frac12$}.
In this step, we argue as in the proof of Proposition \ref{p:necessityLinfty_hom}. 
Indeed, arguing in the proof of the latter result, it is enough to consider functions supported in the unit ball $B_1$ with center at the origin. Let $T = e\otimes h = \mu e\otimes \frac{h}{\|h\|_{L^2}}$ for $\|e\|_{L^2(B)}=1$ and $e,h\in C^\infty_{{\rm c}}(B)$ and where $\mu_1 = \|h\|_{L^2}$ and $\mu_n=0$ for all $n\geq 2$. 
Since $\|\mu\|_{\ell^\zeta}=\|h\|_{L^2(\Domm)}$, similarly to the proof of Proposition \ref{p:necessityLinfty_hom}, we obtain the following analogue of \eqref{eq:estimate_full_space_one_side}:
\begin{equation}
\label{eq:estimate_full_space_one_side_unweighted}
\|g h\|_{H^{-s,q}(\R^d)}\leq C\|g\|_{L^\eta(\R^d)} \|h\|_{L^2(\R^d)}
\end{equation}
where $C>0$ is independent of $g,h\in C^\infty(B)$.

Similar to the proof of Proposition \ref{p:necessityLinfty_hom}, the estimate \eqref{eq:estimate_full_space_one_side_unweighted} and a scaling argument readily yield the desired inequality.

\smallskip

\emph{Step 2: Necessity of $\frac{1}{\eta} + \frac{1}{\zeta}\geq \frac1q$ in case $\cO=\R^d$.}
    Let $\phi\in C^\infty_{{\rm c}}((0,1)^d)$ with $\|\phi\|_{L^2(\R^d)}=1$ and define an orthonormal system via $e_k = \phi(\cdot -k)$ for $k\in \bZ^d$. Let $\psi\in C^\infty_{{\rm c}}((0,1)^d)$ such that $\psi = 1$ on $\supp \phi$, let an integer $N\geq 1$ and define $g = \sum_{1\leq |k| \leq N} \psi(\cdot-k)$, so that clearly $ge_k = e_k$, for $1\leq |k|\leq N$. Let, moreover, $\mu_k = 1$ for all $1\leq |k|\leq N$ and zero otherwise.
    Then, 
    \begin{align*}
    \|M_g R_\mu \|_{\gamma(L^2,H^{-s,q})}  
    &\geq \Big\|\sum_{k\geq 1}\gamma_k g\sum_{1\leq |n|\leq N}(e_k,e_n)e_n \Big\|_{L^2(\Omega;H^{-s,q})} 
    = \Big\|\sum_{1\leq |k|\leq N}\gamma_k e_k \Big\|_{L^2(\Omega;H^{-s,q})} 
    \\& = \Big\|\sum_{1\leq |k|\leq N}\gamma_k (1-\Delta)^{-s/2} e_k \Big\|_{L^2(\Omega;L^q)} 
     \eqsim \Big\|\Big(\sum_{1\leq |k|\leq N} |(1-\Delta)^{-s/2} e_k|^2 \Big)^{1/2}\Big\|_{L^q} 
    \\& \geq \Big(\sum_{1\leq |k|\leq N} \int_{(0,1)^d+k} |(1-\Delta)^{-s/2} e_k|^q \,\di x\Big)^{1/q}
     =N^{d/q} \|(1-\Delta)^{-s/2}\phi\|_{L^{q}((0,1)^d)} ,
    \end{align*}
    where we used \eqref{eq: square fct char gamma} and that 
       $|(1-\Delta)^{-s/2}e_k|^2$ is dominated by its sum.
    On the other hand,
\begin{align*}
\|g\|_{L^{\eta}(\R^d)} \|\mu\|_{\ell^\zeta} =\|\psi\|_{L^{\eta}(\R^d)} N^{d/\eta} N^{d/\zeta}. 
\end{align*}
Therefore, if \eqref{eq: necessity 2 parameters general ONB} holds, then letting $N\to \infty$, we find that $\frac{1}{\eta} + \frac{1}{\zeta}\geq \frac1q$. 
\end{proof}

\section{Selected applications to random fields and SPDEs}
\label{sec: SPDE section}
This section aims to show how the main results of the paper can be used to study random fields and parabolic SPDEs. 
This section is organized as follows:
\begin{itemize}
    \item Subsection \ref{ss:random_field} -- Applications to random fields, including Mat\'ern-type fields.
    \item Subsection \ref{ss:sharp_bounds_SPDEs} -- Sharp bounds for solutions to parabolic SPDEs.
    \item Subsection \ref{subsection SPDE comparison} -- Comparison of our results on SPDEs with the existing literature. 
\end{itemize}
We point out that in Subsection \ref{ss:sharp_bounds_SPDEs}, we are not aiming to formulate the most general result on parabolic SPDEs, which we can obtain using the results in the previous sections, but rather to illustrate what type of estimates one obtains, and how all the parameters interact. In particular, our results can be applied to more general second-order operators, see Remark \ref{rem:generalization_SPDE}\eqref{it:generalization_SPDE2}.

We start with the application to the regularity of white noise and Mat\'ern noise, which appear as stochastic elliptic PDEs.

\subsection{Spatial random fields}
\label{ss:random_field}
Here, we discuss implications of our results for important choices of random fields. 
For detailed theory on spatial random fields, the reader is referred to 
\cite{AdlTay, AzFag20, cioica2010spatial, CDDKLRRS, FagFalUn, LangSch, LRL, RueHel} and references therein.  We start our discussion with spatial white noise. 

\smallskip

We begin with a relatively simple case concerning the regularity of spatial white noise on an open set $\cO\subseteq \R^d$. Recall that a spatial white noise is formally given by the Gaussian series $\xi=\sum_{n\geq 1} f_n \g_n$, where $(f_n)_{n\geq 1}$ is an orthonormal basis of $L^2(\cO)$ and $(\g_n)_{n\geq 1}$ a sequence of standard independent Gaussian random variables on a probability space $(\O,\mathcal{A},\P)$.

For $p=2$, the following is a consequence of Propositions \ref{prop:M_g_Sobolev} and \ref{prop:M_g_Sobolev_hom} (see also Lemma \ref{lem:zetainfty}) and \eqref{eq:homogeneous_spaces_Mg_R_bounds_step1} in the proof of Theorem \ref{thm:MgTmu delta_hom}. General $p\in [1, \infty)$ follows from the Kahane-Khintchine inequalities. 

\begin{proposition}[Regularity of white noise with a multiplicative coefficient]
Let $\cO$ be either a bounded domain with smooth boundary, or $\cO\in \{\T^d,\R^d\}$.
Let $\xi$ be a spatial white noise on $\cO$. 
Then, for all $p\in [1,\infty)$, $q\in (2, \infty)$,  $\eta\in (2, q)$, $s\in (\frac{d}{2},d)$ such that $\frac{s}{d}+\frac1q \geq \frac{1}{\eta} + \frac{1}{2}$, the Gaussian series $g\,\xi =g\,\sum_{n\geq 1} f_n \g_n$ converges in $L^p(\O;H^{-s,q}(\cO))$ and moreover
\begin{align*}
    \big(\E \|g \,\xi \|_{\wt{H}^{-s,q}(\mathcal{O})}^p\big)^{1/p} \lesssim_{s,\eta,p,q,d}  \|g\|_{L^\eta(\mathcal{O})}.
\end{align*}
Finally, if $\cO=\R^d$ and $\frac{s}{d}+\frac1q = \frac{1}{\eta} + \frac{1}{2}$, then the above holds with $H^{-s,q}(\R^d)$ replaced by $\dot{H}^{-s,q}(\R^d)$.
\end{proposition}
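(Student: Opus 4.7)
The plan is to recognize the statement as a direct specialization of the main theorems to the trivial coloring $\mu_n\equiv 1$ (equivalently, to the operator $R=\mathrm{Id}_{L^2(\cO)}$), and then to lift second moments to $p$-th moments by a standard Gaussian comparison.

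First I would observe that, since $(f_n)_{n\geq 1}$ is an orthonormal basis of $L^2(\cO)$, one has $\mathrm{Id}_{L^2(\cO)}=\sum_{n\geq 1} f_n\otimes f_n$, so that $M_g \mathrm{Id}_{L^2(\cO)}f_n = g f_n$. By the definition of the $\gamma$-radonifying norm in \eqref{eq:def_gamma_norm}, convergence of $g\xi = \sum_{n\geq 1}\gamma_n\, g f_n$ in $L^2(\O;H^{-s,q}(\cO))$ is equivalent to $M_g\in \gamma(L^2(\cO),H^{-s,q}(\cO))$, and the two norms coincide. Hence the $p=2$ case of the first assertion is equivalent to
$$
\|M_g\|_{\gamma(L^2(\cO),H^{-s,q}(\cO))}\lesssim_{d,s,q,\eta}\|g\|_{L^\eta(\cO)},
$$
which is exactly the conclusion of Lemma \ref{lem:zetainfty} applied with $R=\mathrm{Id}_{L^2(\cO)}$ (operator norm equal to $1$); the hypotheses $s\in(d/2,d)$, $q\in(2,\infty)$, $\eta\in(2,q)$ and $\frac{s}{d}+\frac{1}{q}\geq\frac{1}{\eta}+\frac{1}{2}$ match \eqref{eq:boundHsqsharp} verbatim. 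The case $\cO=\T^d$ is covered via Remark \ref{rem:groups}. For $\cO=\R^d$ in the sharp case $\frac{s}{d}+\frac{1}{q}=\frac{1}{\eta}+\frac{1}{2}$, I would instead use Step~1 of the proof of Theorem \ref{thm:MgTmu delta_hom} (i.e.\ the bound \eqref{eq:homogeneous_spaces_Mg_R_bounds_step1}), again with $R=\mathrm{Id}_{L^2(\R^d)}$, to obtain the same estimate with $\dot{H}^{-s,q}(\R^d)$ in place of $H^{-s,q}(\R^d)$.

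To pass to arbitrary $p\in[1,\infty)$, I would apply the Kahane-Khintchine inequalities (see e.g.\ \cite[Theorem 6.2.6]{HNVW2}), which yield
$$
\big(\E\|g\xi\|_Y^p\big)^{1/p}\eqsim_p \big(\E\|g\xi\|_Y^2\big)^{1/2}
$$
for $Y=H^{-s,q}(\cO)$ or $Y=\dot{H}^{-s,q}(\R^d)$; combined with the $p=2$ bound above, this yields the required estimate. No genuine obstacle is anticipated: the statement is essentially a substitution $\mu\equiv 1$ in the machinery already developed, and the only step beyond this substitution is the routine Gaussian moment equivalence.
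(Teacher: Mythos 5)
Your proposal is correct and follows essentially the same route as the paper: the authors likewise obtain the $p=2$ case from Propositions \ref{prop:M_g_Sobolev} and \ref{prop:M_g_Sobolev_hom} (equivalently, Lemma \ref{lem:zetainfty} with $R=\mathrm{Id}$ and the homogeneous bound \eqref{eq:homogeneous_spaces_Mg_R_bounds_step1}), and then pass to general $p\in[1,\infty)$ via the Kahane--Khintchine inequalities. No gaps.
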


Note that in the case of bounded domains, one can take $g=1$. The previous yields $\xi\in H^{-s,q}(\mathcal{O})$ for any $q\in (2, \infty)$ and any $s>\frac{d}{2}$. This is well known and optimal in Sobolev spaces. The reader is referred to \cite{ArOh,V11_regularity_Besov} for improvements in terms of Besov spaces in the case $\cO=\T^d$. 

\smallskip

Next, we turn our attention to the more interesting case of Matérn fields. These are widely used in spatial data analysis \cite{SteinML}, and in machine learning \cite{RasWil}. We use the construction through a spatial SPDE as introduced in \cite{LRL}. 
Here, we obtain spatial regularity results for such random fields in the case of a multiplicative coefficient $g$. Below 
$(e_n)_{n\geq 1}$ and $(\lambda_n)_{n\geq 1}$ denote the eigenfunctions and corresponding normalized eigenvalues of the Laplacian on a bounded domain $\cO$, respectively. In particular, for the Dirichlet Laplacian $\Delta_{{\rm Dir}}$ (see Subsection \ref{sss:func_analytic_setting_heat} for details), one has $(1-\Delta_{{\rm Dir}})^{-\beta} f=\sum_{n\geq 1} (1+\lambda_n)^{-\beta}(f,e_n)_{L^2(\cO)}e_n$ for $\beta>0$ and $f\in L^2(\cO)$.

\begin{theorem}[Regularity of Matérn-type fields with a multiplicative coefficient]
\label{thm:regulartity_matern_with_multiplicative_g}
Let $\cO$ be either a bounded domain with smooth boundary, or $\cO\in \{\T^d,\R^d\}$. Let $\xi$ be a spatial white noise on $\cO$. Let $\alpha> 0$, $p\in [1, \infty)$, $q\in (1, \infty)$,  $\eta\in (1, q)$, $s\in (0,d)$ and $g\in L^\eta(\cO)$. The following assertions hold:
\begin{enumerate}[{\rm(1)}]
\item\label{it:regulartity_matern_with_multiplicative_g1} If $\cO$ is a bounded smooth domain, $\alpha\leq d-\frac12$, $\frac{1}{\eta} + \frac{1}{2} -\frac{s}{d} - \frac{1}{q}  <\frac{\alpha}{2d-1}$ and $\frac{1}{\eta} - \frac12<\frac{\alpha}{2d-1}$, then $g\,(1-\Delta_{{\rm Dir}})^{-\alpha/2} \xi$ is well-defined in $L^2(\O,H^{-s,q}(\cO))$ and 
$$
    \big(\E \|g \,(1-\Delta_{{\rm Dir}})^{-\alpha/2}\xi \|_{\wt{H}^{-s,q}(\mathcal{O})}^p\big)^{1/p} \lesssim_{s,\eta,p,q,d,\alpha}  \|g\|_{L^\eta(\mathcal{O})}.
$$
\item\label{it:regulartity_matern_with_multiplicative_g2} If $\cO\in \{\R^d,\T^d\}$, $\alpha\leq \frac{d}{2}$, $\frac{1}{\eta} + \frac{1}{2} -\frac{s}{d} - \frac{1}{q}  <\frac{\alpha}{d}$
and $\frac{1}{\eta} - \frac12<\frac{\alpha}{d}$, then 
$g\,(1-\Delta)^{-\alpha/2} \xi$ is well-defined in $L^2(\O,H^{-s,q}(\cO))$ and 
$$
    \big(\E \|g\, (1-\Delta)^{-\alpha/2}\xi \|_{H^{-s,q}(\mathcal{O})}^p\big)^{1/p} \lesssim_{s,\eta,p,q,d,\alpha}  \|g\|_{L^\eta(\mathcal{O})}.
$$
\end{enumerate}
\end{theorem}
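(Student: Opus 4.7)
The plan is to rewrite $g\,(1-\Delta)^{-\alpha/2}\xi$ as a Gaussian series of the form $g\sum_{n\geq 1}\mu_n\gamma_n f_n$ covered by Theorem \ref{thm:MgTmu delta} (on bounded domains and on $\T^d$), or as a Fourier-multiplier action covered by Theorem \ref{thm:Randomfield} (on $\R^d$). Once the bound in $L^2(\Omega;H^{-s,q}(\cO))$ is established in this way, the extension to arbitrary $p\in[1,\infty)$ is automatic by the Kahane-Khintchine inequalities \cite[Section 6.2b]{HNVW2}.

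For \eqref{it:regulartity_matern_with_multiplicative_g1}, the first step is to choose the $L^2$-normalized Dirichlet eigenbasis $(e_n)_{n\geq 1}$ of $-\Delta_{\rm Dir}$, with eigenvalues $(\lambda_n)_{n\geq 1}$, as the basis in the defining series of $\xi$; this is legitimate since the law of the white noise does not depend on the chosen orthonormal basis. Since $(1-\Delta_{\rm Dir})^{-\alpha/2}$ is diagonal in this basis,
\[
g\,(1-\Delta_{\rm Dir})^{-\alpha/2}\xi \;=\; g\sum_{n\geq 1}\mu_n\gamma_n e_n, \qquad \mu_n:=(1+\lambda_n)^{-\alpha/2},
\]
which matches the Gaussian-series reformulation of Theorem \ref{thm:MgTmu delta} with $\Sf=(e_n)_{n\geq 1}$. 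Combining Weyl's law $\lambda_n\eqsim n^{2/d}$ with the sharp $L^\infty$-bound $\|e_n\|_{L^\infty(\cO)}\lesssim n^{(d-1)/(2d)}$ from \eqref{eq:f_n_growth_in_general}, one finds $\|\mu\|_{\ell^\zeta(\Sf)}^\zeta \lesssim \sum_{n\geq 1} n^{-\alpha\zeta/d+(d-1)/d}$, which is finite precisely when $\tfrac{1}{\zeta}<\tfrac{\alpha}{2d-1}$. It then remains to pick $1/\zeta$ in this admissible range compatibly with \eqref{eq:conditionsmain delta}, i.e.\ so that also $\tfrac{1}{\zeta}\geq \tfrac{1}{\eta}+\tfrac12-\tfrac{s}{d}-\tfrac{1}{q}$, $\tfrac{1}{\zeta}>\tfrac{1}{\eta}-\tfrac12$, and $\tfrac{1}{\zeta}\leq \tfrac12$. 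The two strict hypotheses in \eqref{it:regulartity_matern_with_multiplicative_g1} open a non-empty window for the first two of these, while $\alpha\leq d-\tfrac12$ guarantees $\tfrac{\alpha}{2d-1}\leq \tfrac12$, so that $\zeta\geq 2$ is attainable; an appeal to \eqref{eq:maingammabound} then yields the $p=2$ estimate.

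Part \eqref{it:regulartity_matern_with_multiplicative_g2} splits by domain. On $\T^d$, using the standard Fourier basis one has $\|e_k\|_{L^\infty(\T^d)}\eqsim 1$, so $\ell^\zeta(\Sf)$ collapses to the unweighted $\ell^\zeta$ and the summability threshold becomes $\tfrac{1}{\zeta}<\tfrac{\alpha}{d}$; the remainder of the argument is identical. On $\R^d$, where no countable spectral basis is available, I would instead invoke Theorem \ref{thm:Randomfield} with the Fourier symbol $m(\xi)=(1+4\pi^2|\xi|^2)^{-\alpha/2}$, noting that $m\in L^\zeta(\R^d)$ iff $\alpha\zeta>d$, i.e.\ $\tfrac{1}{\zeta}<\tfrac{\alpha}{d}$. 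In both sub-cases, $\alpha\leq d/2$ gives $\tfrac{\alpha}{d}\leq \tfrac12$, so the parameter selection above produces a valid $\zeta\geq 2$.

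The main obstacle is not analytic but combinatorial, namely checking in each of the three sub-cases that the several constraints on $1/\zeta$ carve out a non-empty interval. This reduces to elementary interval arithmetic made possible by the strict-inequality hypotheses on $(s,q,\eta,\alpha)$ and the endpoint bound on $\alpha$. Beyond this bookkeeping, the result follows by direct appeal to Theorems \ref{thm:MgTmu delta} and \ref{thm:Randomfield}.
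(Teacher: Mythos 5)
Your proposal is correct and follows essentially the same route as the paper: diagonalize $(1-\Delta_{\rm Dir})^{-\alpha/2}$ in the Dirichlet eigenbasis, use Weyl's law together with the bound $\|e_n\|_{L^\infty(\cO)}\lesssim n^{(d-1)/(2d)}$ to identify the summability threshold $\tfrac{1}{\zeta}<\tfrac{\alpha}{2d-1}$, and then invoke Theorem \ref{thm:MgTmu delta} (resp.\ the Fourier basis on $\T^d$ and Theorem \ref{thm:Randomfield} with $m_\alpha\in L^\zeta(\R^d)$ on $\R^d$). Your explicit verification that the constraints on $1/\zeta$ carve out a non-empty interval is a detail the paper leaves implicit, but the argument is the same.
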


As usual, in \eqref{it:regulartity_matern_with_multiplicative_g2}, $(1-\Delta)^{-\alpha/2}$ is defined by means of the Fourier multipliers with symbol $m_\alpha=(1+4\pi^2|\cdot|^2)^{-\alpha/2}$, see \eqref{eq:Fourier_multiplier}.
From Theorem \ref{thm:regulartity_matern_with_multiplicative_g}, one sees that on bounded domains the regularity of the (generalized) Mat\'ern field can be lower than in the cases $\T^d$ or $\R^d$.

\begin{proof}
\eqref{it:regulartity_matern_with_multiplicative_g1}: By expanding the noise with respect to the normalized eigenvalues $(e_n)_{n\geq 1}$ of the Dirichlet Laplacian on $\cO$, we obtain $\xi=\sum_{n\geq 1} (1+\lambda_n)^{-\alpha/2} e_n\g_n$. Hence, we can apply Theorem \ref{thm:MgTmu delta} to obtain the claim. Indeed, using the Weyl asymptotic $\lambda_n \sim n^{2/d}$ and the sup-norm bound $\|e_n\|_{L^\infty(\mathcal{O})} \lesssim n^{(d-1)/2d}$, the condition $\mu \in \ell^\zeta(\Sf)$ becomes (see \eqref{eq:ellzeta_F})
\begin{align}
    \sum_{n\geq 1} (1+\lambda_n)^{-\alpha \zeta/2} \|e_n\|_{L^\infty(\mathcal{O})}^2 
    \lesssim \sum_{n\geq 1} n^{-(\alpha\zeta)/d} n^{(d-1)/{d}} < \infty.
\end{align}
Convergence is ensured if the exponent is strictly less than $-1$, which leads to the condition  $\alpha > \frac{2d-1}{\zeta}$ on the noise smoothness parameter. Thus, the claim in \eqref{it:regulartity_matern_with_multiplicative_g1} follows from Theorem \ref{thm:MgTmu delta}.

\smallskip

\eqref{it:regulartity_matern_with_multiplicative_g2}: The case $\cO=\T^d$ follows as in \eqref{it:regulartity_matern_with_multiplicative_g1} by using that the normalized eigenvalues of the Laplacian satisfies $\|e_n\|_{L^\infty(\T^d)}\equiv 1$ (see also Subsection \ref{ss:periodic_setting}). In the case $\cO=\R^d$, it follows by Theorem \ref{thm:Randomfield} noticing that $m_\alpha = (1+4\pi^2|\cdot |^2)^{-\alpha/2}\in L^\zeta(\R^d)$ if and only if $\alpha>\frac{d}{\zeta}$. 
\end{proof}

\subsection{Sharp bounds for stochastic heat equations}
\label{ss:sharp_bounds_SPDEs}
In this subsection, we consider the following parabolic SPDEs: 
\begin{equation}
\label{eq:SPDE no drift}
\left\{
\begin{aligned}
\di u  & = \Delta u\, \di  t +  g R \,\di W &\ \  \text{ on }&\cO,\\ 
u(0)&=0 &\text{ on }&\cO.
\end{aligned}
\right.
\end{equation}
where $W$ denotes a cylindrical Brownian motion in $L^2(\cO)$ (see e.g.\ \cite[Definition 2.2]{AV25_survey}) on a filtered probability space $(\O,\mathcal{A},(\mathcal{F}_t)_{t\geq 0},\P)$, the process $
    g:(0,T)\times\Omega \to L^\eta(\cO)$ for an $\eta >1$ is progressively measurable, and in case $\partial\cO$ is non-empty, we supplement the above problem with Dirichlet boundary conditions, i.e. $u=0\ \  \text{ on }\partial\cO$.
Recall that $W$ can be identified with space-time white noise on $\cO$. Moreover, we interpret the function $g$ in the expression $g R \,\di W$  as a multiplication operator by $g$, i.e.\ $M_g R\, \di W$.

We will focus on the following two situations depending on the structure of the noise:
\begin{enumerate}[(a)]
    \item\label{eq:settnigRa} \emph{(Gaussian series -- Theorem \ref{thm: spde with delta no time}).} The noise is of the form
    \[ \sum_{n\geq 1} \mu_n f_n(x) w_n(t) = R_\mu  W(t),\]
where $(w_n)_{n\geq 1}$ are independent standard Brownian motions, $\mu$ a coloring sequence, $(e_n)_{n\geq 1}$ and $(f_n)_{n\geq 1}$ are an orthonormal basis of $L^2(\cO)$, and
\begin{equation}
\label{eq:Tmu_def2}
R_\mu = \sum_{n\geq 1} \mu_n e_n\otimes f_n \in \cL(L^2(\cO), L^\zeta(\cO)).
\end{equation}
\item\label{eq:settnigRb} \emph{(Fourier multipliers -- Theorem \ref{thm:heatMatern}).} The domain is the whole space $\cO=\R^d$, and the noise is of the form
$$
(\cF^{-1}m )*W= T_m W,
$$
where $\F^{-1}$ is the inverse Fourier transform, and $T_m$ the Fourier multiplier with symbol $m$, see \eqref{eq:Fourier_multiplier}. In particular, this covers Mat\'ern fields (see Subsection \ref{ss:random_field}), that is, for some $\alpha>0$,
$$
(1-\Delta)^{-\alpha/2} W = (\F^{-1}m_\alpha )*W \quad  
\text{ where }\quad m_\alpha(\cdot):=(1+4\pi^2|\cdot|^2)^{-\alpha/2}.
$$
\end{enumerate}

There is a very extensive literature on the stochastic heat equation \eqref{eq:SPDE no drift}. For details on the setting in which a non-trace class noise is constructed via the eigensystem of the leading operator, which in a certain sense is related to \eqref{eq:settnigRa} from the above, the reader is referred to \cite{C03, cerrai2009khasminskii, cerrai2011averaging,cerrai2025nonlinear,cerrai2003large, Salins22, Salinstrans}. A detailed comparison is included in Subsection \ref{sss:comparison_new_2}. The whole space case, in some sense, is contained in both \eqref{eq:settnigRa} and \eqref{eq:settnigRb}. For references on the whole space setting with non-trace class noise related to both settings, the reader is referred to  \cite{Kry} and \cite{Dal99,Khos,Salinsunbdd,sanz2002holder}, respectively.

\subsubsection{Functional analytic setting for stochastic heat equations}
\label{sss:func_analytic_setting_heat}

In this subsection, we illustrate the functional analytic setting for the Dirichlet Laplacian, which will be needed here. Here, we begin by illustrating the case of bounded domains, as the periodic and whole-space cases are easier.   
From  \cite[Example A.4]{AV25_survey}, one can view the Dirichlet  Laplacian as a mapping 
\begin{equation}
\label{eq:Delta_Dir_def}
\Delta_{\rm Dir}: H_{\rm Dir}^{1-s,q}(\cO)\subseteq H_{\rm Dir}^{-1-s,q}(\cO)\to H_{\rm Dir}^{-1-s,q}(\cO)
\end{equation}
where $(H_{\rm Dir}^{\sigma,q}(\cO))_{\sigma\in \R}$ is the so-called Sobolev tower associated to the Dirichlet Laplacian. Here, we do not review the full construction. The reader is referred to \cite[Appendix A]{AV25_survey}. For future convenience, let us recall that one has the following identification:
\begin{align}
\label{eq:basic_equality_Sob_tower_SPDEs}
H^{2,q}_{\rm Dir}(\cO) = 
H^{2,q}(\cO)
\cap 
H_0^{1,q}(\cO)
\quad \text{ and }\quad 
H^{\sigma,q}_{\rm Dir}(\cO) = 
H_0^{\sigma,q}(\cO) \ \text{ for }\sigma\in (0,1+\tfrac{1}{q})\setminus \{\tfrac{1}{q}\}, 
\end{align}
see \cite[eq.\ (A.3)]{AV25_survey}. Here, as usual, $H^{\sigma,q}_0(\cO)$ denotes the closure of the test functions $C^\infty_{{\rm c}}(\cO)$ in $H^{\sigma,q}(\cO)$.
Since $\Delta_{\rm Dir}$ generates an analytic $C_0$-semigroup $S$ on $H_{{\rm Dir}}^{-1-s,q}(\cO)$ for any $s\in\R$ and $q\in (1,\infty)$, we can consider a mild solution $u$ of \eqref{eq:SPDE no drift} and this has the form
\begin{align}
\label{eq: mild sol}
    u(t) = \int_0^t S(t-s) M_{g(s)}\, R \,\di W(s),\quad t\in [0,T].
\end{align}
Here, with a slight abuse of notation, we do not display the dependence of the semigroup $S$ on $(s,q)$ as for different values of $(s,q)$ the semigroups are compatible.

It is well-known that in the case either $\cO=\T^d$ or $\cO=\R^d$, the above construction can also be performed by ignoring the boundary conditions. In particular, the solution to \eqref{eq:SPDE no drift} is given by \eqref{eq: mild sol} where $(S(t))_{t\geq 0}$ is the usual heat semigroup on $\T^d$ and $\cO=\R^d$, respectively. 

\smallskip

In the following section, we derive sharp bounds for the mild solution $u$ given in \eqref{eq: mild sol} depending on the assumptions on $g$ and $R$. For simplicity, we only consider $R$ (thus $\mu$ and $m$ below) which is time-independent. Comments on the possible applications to nonlinear SPDEs, and to more general differential operators are given in Remark \ref{rem:generalization_SPDE} below.

\subsubsection{Sharp estimates for stochastic heat equations}
\label{ss:results_heat_eq_general}
Here, we provide estimates for the solution to \eqref{eq:SPDE no drift} in Bessel potential spaces, based on the estimates derived in the present paper. 
We present two results, following the cases \eqref{eq:settnigRa} and \eqref{eq:settnigRb} illustrated at the beginning of Subsection \ref{ss:sharp_bounds_SPDEs}. 
As for the first case, we mainly focus on the weighted sequence space, as the unweighted setting does not capture the scaling of the stochastic heat equation (see Subsection \ref{ss:scaling_intro} and Section \ref{s:weight_necessary}).

\begin{theorem}[Weighted coloring sequences -- $\Sf\subseteq  L^\infty(\cO)$]
\label{thm: spde with delta no time}
Let $\cO$ be either a bounded $C^2$-domain, or $\cO\in \{\R^d,\R^d_+,\T^d\}$.
Let $\Sf=(f_n)_{n\geq 1}$ be an orthonormal basis of $L^2(\cO)$ such that $f_n\in L^\infty(\cO)$ for all $n\geq 1$.
    Fix $T>0$, $q\in [2, \infty)$ $\eta\in (1, q)$, $\zeta\in (2, \infty)$,  $s\in(0,d)$, and assume that $s\in (0,2)$  if $\partial\cO$ is not empty.  Suppose that 
\begin{align}\label{eq:sharpsqetazeta}
\frac{s}{d} + \frac{1}{q}  = \frac{1}{\eta} + \frac{1}{2} - \frac{1}{\zeta} \qquad \text{ and } \qquad   \frac{1}{\eta} - \frac{1}{\zeta}<\frac12.
\end{align}
Let $p\in (2,\infty)$, or $p\in [2,\infty)$ in case $q=2$.
Then, for all $R=R_\mu$ as in \eqref{eq:Tmu_def2} with $\mu\in\ell^\zeta(\Sf)$, and progressively measurable $g\in L^p(\R_+\times \O;L^\eta(\cO))$, the mild solution $u$ to \eqref{eq:SPDE no drift} on $[0,T]$ satisfies
\begin{align}
  \E\|u\|_{L^p(0,T;H^{1-s,q}(\cO))}^p 
  + \E\|u\|_{C([0,T];B^{1-s-2/p}_{q,p}(\cO))}^p
    \lesssim_{T,s,d,q,\eta,\zeta}\|\mu\|_{\ell^\zeta(\Sf)}^p\,
   \E\|g\|_{L^{p}(0,T;L^\eta(\cO))}^p.
\end{align}
Moreover, if $\cO=\R^d$, then for $\mu$ and $g$ as before, it holds that  
\begin{align}
  \E\|u\|_{L^p(\R_+;\dot{H}^{1-s,q}(\R^d))}^p 
  + \E\|u\|_{C([0,\infty);\dot{B}^{1-s-2/p}_{q,p}(\R^d))}^p
    \lesssim_{s,d,q,\eta,\zeta}\|\mu\|_{\ell^\zeta(\Sf)}^p\,
   \E\|g\|_{L^{p}(\R_+;L^\eta(\R^d))}^p.
\end{align}
\end{theorem}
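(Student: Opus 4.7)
The plan is to combine stochastic maximal $L^p$-regularity for the Dirichlet Laplacian with the pointwise $\gamma$-radonifying estimate provided by Theorem \ref{thm:MgTmu delta}. Set
\begin{align*}
X_0:=H^{-s,q}_{\mathrm{Dir}}(\cO),\qquad X_1:=H^{2-s,q}_{\mathrm{Dir}}(\cO),
\end{align*}
so that, via the Sobolev tower associated with $\Delta_{\mathrm{Dir}}$, one has $[X_0,X_1]_{1/2}=H^{1-s,q}_{\mathrm{Dir}}(\cO)$ and $(X_0,X_1)_{1/2-1/p,p}$ as the natural real-interpolation trace space at exponent $p$. Using the identifications \eqref{eq:basic_equality_Sob_tower_SPDEs} and their analogues for the remaining geometries $\cO\in\{\R^d,\R^d_+,\T^d\}$, together with the assumption $s<1+\tfrac{1}{q}$ when $\partial\cO\neq\emptyset$, these two spaces can be identified, up to equivalent norms, with the standard $H^{1-s,q}(\cO)$ and $B^{1-s-2/p}_{q,p}(\cO)$ appearing in the statement.

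Since $\Delta_{\mathrm{Dir}}$ possesses a bounded $H^\infty$-calculus of angle less than $\pi/2$ on each Sobolev-tower space, it admits stochastic maximal $L^p$-regularity on $X_0$ in the prescribed range of $p$ by \cite{MaximalLpregularity}. Inserting the mild-solution formula \eqref{eq: mild sol} with integrand $\xi(\omega,t)=M_{g(\omega,t)}R_\mu$, the stochastic maximal $L^p$-regularity estimate yields
\begin{align*}
\E\|u\|_{L^p(0,T;[X_0,X_1]_{1/2})}^p+\E\|u\|_{C([0,T];(X_0,X_1)_{1/2-1/p,p})}^p\lesssim_{T,p}\E\|\xi\|_{L^p(0,T;\gamma(L^2(\cO),X_0))}^p.
\end{align*}
The right-hand side is estimated pointwise in $(\omega,t)$ by Theorem \ref{thm:MgTmu delta}: under the parameter conditions \eqref{eq:sharpsqetazeta} we obtain
\begin{align*}
\|M_{g(\omega,t)}R_\mu\|_{\gamma(L^2(\cO),H^{-s,q}(\cO))}\lesssim_{s,d,q,\eta,\zeta}\|\mu\|_{\ell^\zeta(\Sf)}\|g(\omega,t)\|_{L^\eta(\cO)}.
\end{align*}
The restriction-extension definition of the quotient space $H^{-s,q}_{\mathrm{Dir}}(\cO)$ transfers this bound to $\gamma(L^2(\cO),X_0)$; raising to the $p$-th power and integrating in $(\omega,t)$ yields the first estimate of the theorem.

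For the homogeneous estimate on $\cO=\R^d$, the argument is structurally identical after replacing the inhomogeneous Sobolev tower by the homogeneous scale $\dot H^{\sigma,q}(\R^d)$. The Laplacian has homogeneous stochastic maximal $L^p$-regularity on $\dot H^{-s,q}(\R^d)$ via \cite[Subsection 6.2]{ALV21}, and the required pointwise $\gamma$-bound is provided by Theorem \ref{thm:MgTmu delta_hom}, which demands precisely the scaling-invariant equality form of \eqref{eq:sharpsqetazeta}. No boundary identification is needed here because all spaces already belong to the standard full-space homogeneous scale and respect the parabolic scaling of the heat equation.

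The main difficulty is not the probabilistic argument itself -- which is a black-box application of maximal regularity once the $\gamma$-bound is in place -- but rather the clean identification of the Sobolev-tower spaces $H^{\sigma,q}_{\mathrm{Dir}}(\cO)$ with the standard Bessel potential and Besov spaces across the full range of admissible parameters. The assumption $s<1+\tfrac{1}{q}$ is precisely the constraint ensuring that $1-s>-\tfrac{1}{q}$, so that the fractional orders $1-s$ and $1-s-\tfrac{2}{p}$ fall in the range where no additional boundary-trace compatibility conditions enter the identification and \eqref{eq:basic_equality_Sob_tower_SPDEs} (and its analogues for half-space, torus, and full space) applies directly. Once this identification is in hand, the remainder of the argument is a direct combination of \cite{MaximalLpregularity,ALV21} with Theorems \ref{thm:MgTmu delta}--\ref{thm:MgTmu delta_hom}.
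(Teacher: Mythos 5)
Your proposal is correct and follows essentially the same route as the paper's proof: identify the Dirichlet Sobolev-tower space with the standard $H^{-s,q}(\cO)$ using \eqref{eq:basic_equality_Sob_tower_SPDEs} and $s<1+\tfrac1q$, bound $\|M_{g(\omega,t)}R_\mu\|_{\gamma(L^2(\cO),H^{-s,q}(\cO))}$ pointwise in $(\omega,t)$ via Theorem \ref{thm:MgTmu delta}, and conclude by stochastic maximal $L^p$-regularity from \cite{MaximalLpregularity} (resp.\ \cite{ALV21} and Theorem \ref{thm:MgTmu delta_hom} for the homogeneous case). The only differences are cosmetic (your ground space is shifted by one derivative relative to the paper's choice $X_0=H^{-1-s,q}_{\mathrm{Dir}}(\cO)$, $X_1=H^{1-s,q}_{\mathrm{Dir}}(\cO)$), and your remark that the homogeneous estimate requires the equality form of the first condition in \eqref{eq:sharpsqetazeta} is an accurate reading of Theorem \ref{thm:MgTmu delta_hom}.
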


Clearly, if instead of the first in \eqref{eq:sharpsqetazeta}, one has $\frac{s}{d} + \frac{1}{q}  \geq \frac{1}{\eta} + \frac{1}{2} - \frac{1}{\zeta}$, then the first estimate in Theorem \ref{thm: spde with delta no time} still holds.

The space $B^{1-s-2/p}_{q,p}(\cO)$ appearing in the above result is a Besov space and could be defined as the real interpolation space $(H^{-1-s,q}(\cO),H^{1-s,q}(\cO))_{1-\frac1p,p}$, and similarly for $\dot{B}^{1-s-2/p}_{q,p}(\R^d)$. For details on real interpolation of such spaces, see e.g.\ \cite[Chapter 14]{HNVW3} and \cite[Chapter 6]{BeLo}. However, it is worth noting that, by Sobolev embeddings for Besov spaces, one has, for all $p\in (1,\infty]$,
\begin{equation}
\label{eq:Besov_embedding_C}
B^{\sigma}_{q,p}(\cO)\embed C(\overline{\cO})\  \text{ provided }\  \sigma>d/q.
\end{equation}
Hence, under appropriate assumptions on the parameters, the above result also allows us to obtain maximal estimates for the process in space and time. On the latter point, a comparison with recent results is given in Subsection \ref{sss:comparison_new_2} below.

\smallskip

As typical in the maximal regularity approach to SPDEs, the estimates in Theorem \ref{thm: spde with delta no time} can be strengthened to optimal space-time estimates involving fractional Sobolev spaces in time, see e.g.\ \cite[Theorem 1.2(1)]{MaximalLpregularity} and \cite[Definition 3.7]{AV25_survey}. In particular, these imply  H\"older regularity jointly in space and time up to an optimal borderline exponent. 
Power weights in time can be included as well, see \cite{AV19}. 
Finally, we also mention that the moment and integrability in time in the estimates of Theorem \ref{thm: spde with delta no time} can be chosen differently, see \cite[Corollary 7.4]{NVW13}.
We leave the details to the interested reader. 
The same results hold for the Neumann Laplacian, and even for more general boundary conditions.

\begin{proof}[Proof of Theorem \ref{thm: spde with delta no time}]
We begin by proving the first claimed estimate.
For brevity, we only discuss the case  where $\partial\cO$ is not empty, as the other case is simpler. 
In this case, note that by duality (see \eqref{eq:dualHtilde}) and restricting the functionals, every element of $\wt{H}^{-s,q}(\cO)= H^{s,q'}(\cO)^*$ restricts to an element of $H_{\rm Dir}^{-s,q}(\cO) = H_{\rm Dir}^{s,q'}(\cO)^*$ and 
\begin{equation}
\label{eq:identification_of_negative_sob_spaces}
\|f|_{H^{s,q'}_{{\rm Dir}}(\cO)}\|_{H_{\rm Dir}^{-s,q}(\cO)} \lesssim \|f\|_{\wt{H}^{-s,q}(\cO)}.
\end{equation}
However, note that, on the one hand, for some choices of the parameters it may happen that 
a nonzero $u\in \wt{H}^{-s,q}(\cO)$ gives a zero $u$ in $H_{\rm Dir}^{-s,q}(\cO)$. On the other hand, for all $g\in L^\eta(\cO)$ and $f\in L^2(\cO)$, the distribution $(M_{g} R_\mu) f\in \wt{H}^{-s,q}(\cO)$ is uniquely determine by its action on $H_{\rm Dir}^{s,q'}(\cO)$.

Now, by stochastic maximal regularity, see for instance \cite[Section 3]{AV25_survey}, it suffices to note that
    \begin{align*}
    \E\int_0^T \|M_{g(t)}R_\mu\|_{\gamma(L^2(\cO),H^{-s,q}_{\rm Dir}(\cO))}^p \,\di t
        &\stackrel{\eqref{eq:identification_of_negative_sob_spaces}}{\lesssim} \E\int_0^T \|M_{g(t)}R_\mu\|_{\gamma(L^2(\cO),\wt{H}^{-s,q}(\cO))}^p \,\di t\\
        & \ \lesssim_{T,s,d,q,\eta,\zeta} \|\mu\|_{\ell^\zeta(\Sf)}^p\,
        \E\int_0^T 
        \|g(t)\|_{L^\eta(\cO)}^p \,\di t,
    \end{align*}
    where we applied Theorem \ref{thm:MgTmu delta} in the second inequality.
    The first claim of Theorem \ref{thm: spde with delta no time} now follows by  the stochastic maximal $L^p$-regularity of the Dirichlet Laplacian, which is a consequence of \cite[Theorems 1.1 and 1.2]{MaximalLpregularity} and \cite[Example A.3 and Proposition A.1(3)]{AV25_survey}. 

The final assertion follows similarly by using \cite[Theorem 6.5]{ALV21} (see also Example 6.6 there).
\end{proof}

In case $\Sf\not\subseteq L^\infty(\cO)$, i.e.\ if there exists $n$ such that $f_n\notin L^\infty(\cO)$, then Theorem \ref{thm: spde with delta no time} is not applicable. However, with the same strategy, we can apply Theorem \ref{thm:generalONB} to obtain the same result for any orthonormal system $(f_n)_{n\geq 1}$, $\mu$ from the unweighted $\ell^{\zeta}$ under the more restrictive condition on the parameters $(s,q,\eta, \zeta)$ in \eqref{eq:parametersnew}.

\smallskip

Finally, we consider the case \eqref{eq:settnigRb} discussed at the beginning of this subsection. 

\begin{theorem}[Fourier multipliers and Mat\'ern-type noises]\label{thm:heatMatern}
Let $\cO = \R^d$.
Fix $T>0$, $q\in [2, \infty)$, $\eta\in (1, q)$, $\zeta\in [2, \infty]$ and $s\in(0,d)$.
Suppose that \eqref{eq:sharpsqetazeta} holds.
Let $p\in (2,\infty)$, or $p\in [2,\infty)$ in case $q=2$.
Then, for all Fourier multipliers $R=T_m$ with symbol $m\in L^\zeta(\R^d)$ (see \eqref{eq:Fourier_multiplier}), and progressively measurable $g\in L^p(\R_+\times \O;L^\eta(\R^d))$, the mild solution $u$ to \eqref{eq:SPDE no drift} on $[0,T]$ satisfies
\begin{align}
  \E\|u\|_{L^p(0,T;H^{1-s,q}(\R^d))}^p
  + \E\|u\|_{C([0,T];B^{1-s-2/p}_{q,p}(\R^d))}^p
&    \lesssim_{T,s,d,q,\eta,\zeta}\|m\|_{L^\zeta(\R^d)}^{p}
   \E\|g\|_{L^{p}(0,T;L^\eta(\R^d))}^p.
\\   \E\|u\|_{L^p(\R_+;\dot{H}^{1-s,q}(\R^d))}^p
  + \E\|u\|_{C(\R_+;\dot{B}^{1-s-2/p}_{q,p}(\R^d))}^p
    & \lesssim_{s,d,q,\eta,\zeta} \|m\|_{L^\zeta(\R^d)}^{p}
   \E\|g\|_{L^{p}(\R_+;L^\eta(\R^d))}^p.
\end{align}
\end{theorem}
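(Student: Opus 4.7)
The plan is to mirror the proof of Theorem \ref{thm: spde with delta no time} almost line by line, simply replacing the $\gamma$-radonifying bound for $M_g R_\mu$ furnished by Theorem \ref{thm:MgTmu delta} with the corresponding bound for $M_g T_m$ provided by Theorem \ref{thm:Randomfield}. Since the domain is $\R^d$, there are no Dirichlet boundary conditions to worry about and the Sobolev-tower identifications (the analogue of \eqref{eq:basic_equality_Sob_tower_SPDEs}--\eqref{eq:identification_of_negative_sob_spaces}) are trivial: the Laplacian generates an analytic $C_0$-semigroup on $H^{-1-s,q}(\R^d)$ for every admissible $(s,q)$, so the mild solution formula \eqref{eq: mild sol} makes sense with $R = T_m$ in place of $R_\mu$.

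For the inhomogeneous estimate, I would first invoke the stochastic maximal $L^p$-regularity of the Laplacian on $\R^d$, as in \cite[Theorems 1.1 and 1.2]{MaximalLpregularity} combined with \cite[Example A.3 and Proposition A.1(3)]{AV25_survey}. This yields
\begin{align*}
\E\|u\|_{L^p(0,T;H^{1-s,q}(\R^d))}^p
+\E\|u\|_{C([0,T];B^{1-s-2/p}_{q,p}(\R^d))}^p
\lesssim_{T,s,d,q,p} \E\int_0^T \|M_{g(t)} T_m\|_{\gamma(L^2(\R^d),H^{-s,q}(\R^d))}^p \,\di t.
\end{align*}
Applying Theorem \ref{thm:Randomfield} pointwise in $t$, which is valid precisely under the assumption \eqref{eq:sharpsqetazeta}, bounds the integrand by $\|m\|_{L^\zeta(\R^d)}^p \|g(t)\|_{L^\eta(\R^d)}^p$, so integrating in $t$ and taking expectation completes the first estimate.

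For the homogeneous estimate on $\R^d_+ = \R_+$, the same scheme is carried out with \cite[Theorem 6.5]{ALV21} (see also Example 6.6 there) replacing the inhomogeneous maximal regularity result, and with the homogeneous assertion of Theorem \ref{thm:Randomfield} (the one involving $\dot{H}^{-s,q}(\R^d)$) replacing its inhomogeneous counterpart. Note that the homogeneous $\gamma$-estimate in Theorem \ref{thm:Randomfield} is stated under equality in the scaling condition; if only the inequality in \eqref{eq:sharpsqetazeta} is assumed, one reduces to the equality case by lowering $s$ or enlarging $q$, at the cost of embedding the resulting homogeneous Besov and Sobolev spaces into the ones appearing in the statement (which is compatible with the scaling of $\dot H^{\sigma,q}(\R^d)$ and $\dot B^{\sigma}_{q,p}(\R^d)$).

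The main obstacle is essentially cosmetic: verifying that the interplay between the Fourier-multiplier operator $T_m$ and the $\R^d$ heat semigroup is handled correctly in the homogeneous setting, since the homogeneous Bessel and Besov scales have no nice Sobolev embeddings and one must argue via the scaling relations directly. Once this bookkeeping is in place, the rest of the proof is a straightforward transcription of the proof of Theorem \ref{thm: spde with delta no time}.
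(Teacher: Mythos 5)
Your overall scheme is exactly the paper's (implicit) proof: transcribe the argument for Theorem \ref{thm: spde with delta no time}, replacing the $\gamma$-bound of Theorem \ref{thm:MgTmu delta} by that of Theorem \ref{thm:Randomfield}, and invoke stochastic maximal $L^p$-regularity for the Laplacian on $\R^d$ (no Sobolev-tower identifications needed) for the inhomogeneous estimate, and \cite[Theorem 6.5]{ALV21} together with the homogeneous part of Theorem \ref{thm:Randomfield} for the second estimate. That part is correct.

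However, your proposed reduction of the homogeneous estimate from the inequality case of \eqref{eq:sharpsqetazeta} to the equality case is a step that fails. A homogeneous Sobolev embedding $\dot H^{-s',q'}(\R^d)\hookrightarrow \dot H^{-s,q}(\R^d)$ is only available along the scaling line $-s'-\tfrac{d}{q'}=-s-\tfrac{d}{q}$, i.e.\ it preserves the quantity $\tfrac{s}{d}+\tfrac1q$; off that line there is no embedding between homogeneous spaces (the low frequencies obstruct it). So ``lowering $s$ or enlarging $q$'' can never convert a strict inequality $\tfrac{s}{d}+\tfrac1q>\tfrac1\eta+\tfrac12-\tfrac1\zeta$ into the equality required by the homogeneous assertion of Theorem \ref{thm:Randomfield} while still embedding back into $\dot H^{1-s,q}(\R^d)$. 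Indeed Proposition \ref{prop:M_g_Sobolev_hom} shows the homogeneous $\gamma$-bound holds \emph{only if} equality holds, so no argument can rescue the strict-inequality case. The homogeneous estimate must therefore be read (as in Theorem \ref{thm: spde intro} and Theorems \ref{thm:MgTmu delta_hom}, \ref{thm:Randomfield}) under the equality $\tfrac{s}{d}+\tfrac1q=\tfrac1\eta+\tfrac12-\tfrac1\zeta$; with that reading, your main argument goes through and no reduction is needed. Delete the embedding step and restrict the homogeneous claim to the sharp case.
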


Note that, if $R=(1-\Delta)^{-\alpha/2}=T_{m_\alpha}$ for some $\alpha>0$ where $m_\alpha=(1+4\pi^2|\cdot|^2)^{-\alpha/2}$ (Mat\'ern field), then the above estimates hold true for 
all $\zeta>\frac{d}{\alpha}$.
It is worth noticing that to obtain sharpness (or, in other words, scaling invariance), one needs to modify the symbol $m_\alpha$ at a logarithmic scale in the same spirit of Subsection \ref{ss:scaling_intro}.

For convenience, we collect further comments in the following remark.
\begin{remark}\
\label{rem:generalization_SPDE}
\begin{enumerate}[\rm (1)] 
 \item \emph{(A glimpse into applications to nonlinear SPDEs).}
 It is well-known that sharp estimates, such as in Theorems \ref{thm: spde with delta no time} and \ref{thm:heatMatern}, are central in the study of nonlinear SPDEs. Typically, the construction of solutions to nonlinear problems often involves linearization and fixed-point techniques. In the context of stochastic maximal $L^p$-regularity, such estimates have been proven to be a powerful tool to study SPDEs in \emph{critical spaces}, see \cite{AV25_survey}.
 As is evident in corresponding applications (see e.g.\ \cite{AVreaction-local,AV20_NS}), after the linear theory is established, the nonlinear estimates are handled with standard techniques such as H\"older inequality and Sobolev embeddings. 
 The $L^\eta(\cO)$-norm appearing in the previous results is very useful for this. Indeed, we apply our result to the nonlinearity of the form $g=G(u)=|u|^h$ where $h>1$. Then, for instance, from Theorems \ref{thm: spde with delta no time}, we have 
\begin{align}
    \E\|u\|_{L^p(0,T;H^{1-s,q}(\cO))}^p
    +\E\|u\|_{C([0,T];B^{1-s-2/p}_{q,p}(\cO))}^p
    &\lesssim
    \E\|G(u)\|_{L^p(0,T;L^\eta(\cO))}^p\lesssim \E\|u\|_{L^{ph}(0,T;L^{h\eta}(\cO))}^{ph}.
\end{align}
Hence, now one can apply a Sobolev embedding $H^{\sigma,q}(\cO)\embed L^{h \eta}(\cO)$ to bound the nonlinearity with norms appearing on the left-hand side of the same estimate.
Note that, as $h>1$, one can choose $\eta<q$ such that $h\eta>q$, and in this situation, the previous Sobolev embedding can be made \emph{sharp}. 
Of course, due to the results in \cite{AV19_QSEE_1,AV19_QSEE_2,AV25_survey}, it is not needed to perform the fixed point argument again. Far-reaching applications of our results to nonlinear SPDEs are given in our forthcoming work \cite{AGVlocal}.

       \item\label{it:generalization_SPDE2} \emph{(Second order operators in divergence form).}
       The Laplace operator in \eqref{eq:SPDE no drift} can also be replaced by a second-order operator in divergence form like $$u\mapsto \nabla \cdot (a\cdot \nabla u),$$ 
       for some uniformly elliptic matrix field $a\in C(\overline{\cO};\R^{d\times d})$. 
       Even in the case of a $C^1$-domain, using 
\cite[Theorem 11.5]{DivH}, one can check that the proofs of Theorems \ref{thm: spde with delta no time} and \ref{thm:heatMatern} also extend to this situation in case $s\leq 1$, see \cite[Lemma 5.4]{Hornung} for details on the applications of the results in \cite{DivH}. If $a\in C^{\alpha}(\cO;\R^{d\times d})$ and $\cO$ is a $C^{1,\alpha}$-domain for some $\alpha>0$, then one can also take $s<1+\min\{\frac{1}{q},\alpha\}$. 
In particular, the regularity of the domain $\cO$ in Theorem \ref{thm: spde with delta no time} can be weakened up to include a restriction on the smoothness parameter $s$.
\end{enumerate}
\end{remark}

\subsection{Comparison to related results for SPDEs}
\label{subsection SPDE comparison}
Our estimates from Theorem \ref{thm:MgTmu delta} potentially have consequences for several papers on nonlinear SPDEs with non-trace class noise (see e.g. \cite{C03,cerrai2003large, SchnVerdamp, SchnVerbdr} and subsequent papers). As mentioned above, applications to nonlinear SPDEs with non-trace class noise will be presented in \cite{AGVlocal}. 

\smallskip

Before we compare our results any further to results in the literature, we would like to point out a general principle which can be used to compare our results to semigroup formulations of estimates such as the one in Theorem \ref{thm:MgTmu delta}. 

Let $A = \Delta_{\rm Dir}$ be as explained in Subsection \ref{sss:func_analytic_setting_heat}. Thus, the operator acts on the space $H^{-1-s,q}_{\rm Dir}(\cO)$, with domain $ H^{1-s,q}_{\rm Dir}(\cO)$, see \eqref{eq:Delta_Dir_def}. Let $S$ denote the analytic semigroup generated by $A$.  For simplicity, we assume that $A$ is invertible, and thus $S$ is exponentially stable with $\|S(t)\|_{\cL(L^q(\cO))}\lesssim e^{-\delta t}$ for a $\delta >0$ (this is true if $\cO$ is bounded, and otherwise it suffices to shift the operator). Under the assumptions of Theorem \ref{thm:MgTmu delta}, from the latter, one gets, for all $t>0$,
\begin{equation}\label{eq:SgT}
\begin{aligned}
\|S(t) g  R_{\mu}\|_{\gamma(L^2(\cO),L^q(\cO))}
& \lesssim \|(-A)^{s/2}S(t)\|_{\calL(L^q(\cO))} \|(-A)^{-s/2} g  R_{\mu}\|_{\gamma(L^2(\cO),L^q(\cO))}
\\ & \lesssim t^{-s/2}\|\mu\|_{\ell^\zeta(\Sf)} \|g\|_{L^{\eta}(\cO)} .
\end{aligned}
\end{equation}
In part of the literature, the above type of estimate is the way to reformulate results like the one in Theorem \ref{thm:MgTmu delta}. 

Conversely, note that if $\|S(t) g R_{\mu}\|_{\gamma(L^2(\cO),L^q(\cO))}\lesssim t^{-s/2}$ for all $t>0$, then 
using a standard formula $(-A)^{-\theta} = C_{\theta} \int_0^\infty t^{\theta-1} S(t) \,\di t$ for $\theta\in (0,1)$ (see \cite[Lemma 6.1.5]{MarSanz}), and that $(-A)^{\sigma/2}:  L^q(\cO)\to H^{-\sigma,q}_{\rm Dir}(\cO)$ is an  isomorphism with inverse $(-A)^{-\sigma/2}$ (see \cite[Example A.4]{AV25_survey}), we find that
\begin{align*}
\|g R_{\mu}\|_{\gamma(L^2(\cO),H^{-\sigma,q}_{\rm Dir}(\cO))} &\eqsim \|(-A)^{-\sigma/2} g R_{\mu}\|_{\gamma(L^2(\cO),L^q(\cO))}
\\ & \lesssim \int_0^\infty t^{\sigma/2-1} \|S(t) g R_{\mu}\|_{\gamma(L^2(\cO),L^q(\cO))} \,\di t
\\ & \lesssim \int_0^1 t^{\sigma/2-1 -s/2}\,  \di t +\int_1^\infty e^{- \delta  t} \,\di t,
\end{align*}
where the latter is finite if $\sigma\in (s,2)$. 

Therefore, both our formulation and the one via the semigroup decay are equivalent up to a loss of some $\varepsilon>0$ in \emph{one direction}. In other words: our formulation in terms of negative Sobolev spaces (and hence, extrapolation spaces, see \cite[Appendix A]{AV25_survey} for the terminology) has the advantage that one can directly apply stochastic maximal $L^p$-regularity to it without losing any $\varepsilon>0$ of smoothness.

\subsubsection{Related $L^2$-estimates} 
Below, we connect our conditions to the ones in \cite{cerrai2009khasminskii,cerrai2011averaging,cerrai2017averaging}. For a discussion on  \cite[Lemma 3.3]{C03}, the reader is also referred to Remark \ref{r:schatter_and_heat_eq}.  The condition $\mu\in \ell^\zeta(\Sf)$ was introduced in \cite{cerrai2009khasminskii} 
and an estimate of the form \eqref{eq:SgT} was derived in the special case $\eta = q=2$:
\begin{align}
\label{eq: estimate Cerrai}
\|S(\cdot)M_{g(\cdot)}R_\mu\|_{L^2(0,T;\cL_2(L^2(\cO)))}\lesssim_T   \|\mu\|_{\ell^\zeta(\Sf)}\|g\|_{L^2((0,T)\times \cO)}.
\end{align}
under  the additional condition that for $\beta\in (0,\infty)$,
\begin{align}
\hspace{-1cm}
\label{eq: condition Cerrai}
\sum_{n\geq 1} \lambda_n^{-\beta}\|f_n\|_{L^\infty(\cO)}^2<\infty\quad
\text{and $\zeta=\infty$ if $d=1$,}\quad\text{or}\quad
\frac{\beta(\zeta - 2)}{\zeta}<1
\quad\text{for $\zeta>2$, if $d\geq 2$.}
\end{align}
From \cite{grieser2002uniform}, on a bounded smooth domain $\cO\subseteq \R^d$, we have
$\|f_n\|_{L^\infty(\cO)}\lesssim \lambda_n^{\frac{d-1}{4}}$ uniformly in $n\geq 1$. Moreover, by the Weyl law, one has $\lambda_n\eqsim n^{\frac{2}{d}}$, so that all together we may assume 
\begin{align}
    \|f_n\|_{L^\infty(\cO)}\lesssim n^{\frac{1}{2}-\frac{1}{2d}}\quad\text{uniformly in  \ $n\geq 1$.}
\end{align}
In order to ensure that \eqref{eq: condition Cerrai} holds, i.e. $\sum_{n\geq 1} n^{-\frac{2}{d}\beta}n^{1-\frac{1}{d}}<\infty$ together with $\tfrac{\beta(\zeta -2)}{\zeta}<1$, one then effectively requires that in $d\geq 2$,
\begin{align}
\label{eq: condition zeta cerrai}
    \zeta<2+\frac{4}{2d-3}.
\end{align}
From Theorem \ref{thm:MgTmu delta} (see \eqref{eq:SgT}) we  see
\begin{align}
\|S(t)M_gR_\mu\|_{\cL_2(L^2(\cO))} &\eqsim 
\|S(t)M_gR_\mu\|_{\gamma(L^2(\cO),L^2(\cO))}
\lesssim t^{-s/2}\|\mu\|_{\ell^\zeta(\Sf)}\|g\|_{L^2(\cO)},
\end{align}
where, to ensure square integrability in time and the conditions in Theorem \ref{thm:MgTmu delta}, we require that 
\begin{align}
\label{eq: comparison cerrai parameters}
1>s\geq \frac{d}{\eta}-\frac{d}{\zeta},\qquad \eta<2.
\end{align}
We observe that for $d\geq 2$ this holds for any $\zeta\in (2,\infty)$, so long as $\eta$ is sufficiently close to $2$.
Moreover, for $d=1$ and $\zeta=\infty$ we may apply Proposition \ref{prop:M_g_Sobolev}, which only requires that $s>\tfrac{1}{2}$. Together with the restriction from \eqref{eq: comparison cerrai parameters} for $d\geq 2$, we get the condition
\begin{align}
\label{eq: final condition comparison cerrai}
\zeta\in (2,\infty],\ \quad d=1,
\quad\text{or}\qquad
\zeta<\frac{2d}{d-2},\ \quad d\geq 2,
\end{align}
which gives a major improvement compared to \eqref{eq: condition zeta cerrai}. 
 Moreover, under the condition \eqref{eq: comparison cerrai parameters}, we obtain \eqref{eq: estimate Cerrai} without the additional condition \eqref{eq: condition Cerrai} and without assuming that the $f_k$ are the eigenfunctions of the operator $S$ or its generator. We note further that since the embedding $L^2(\cO)\hookrightarrow L^\eta(\cO)$, for $\eta<2$, fails in general unbounded domains, we also require the boundedness of $\cO$ to obtain \eqref{eq: estimate Cerrai}.

\subsubsection{Related estimates only under the condition $\mu\in\ell^\zeta(\Sf)$.}
\label{sss:comparison_new_2}
Estimates for the stochastic convolution \eqref{eq: mild sol} were also obtained in the very recent article \cite{cerrai2025nonlinear} under the condition that 
\begin{align}
\label{eq: zeta condition comparison 2025}
\mu\in\ell^\infty
\quad\text{if $d=1$ and}\quad
\mu\in\ell^\zeta(\Sf)
\quad\text{where}\quad \zeta<\frac{2d}{d-2}\quad\text{if $d\geq 2$,}
\end{align}
and the condition that the $(f_k)_{k\geq 1}$ are the eigenfunctions of the differential operator involved. There, an estimate of the form 
\begin{align}
    \label{eq: cerrai 2025 estimate}
    \E\sup_{t\in [0,T]}\|u\|^p_{C(\overline{\cO})}\lesssim_{p,T} \E\sup_{t\in [0,T]}\|g(t)\|_{C(\overline{\cO})}^p
\end{align}
for a $p>1$ sufficiently large, 
is required for the well-posedness of a stochastic convolution treated in that paper. 
Let us point out that \eqref{eq: cerrai 2025 estimate} is a special case of our results, which we can even improve due to the optimality of our setting. More precisely, the condition $\zeta<\frac{2d}{d-2}$ and Theorem \ref{thm: spde with delta no time} ensure the existence of $s<1$ such that, for all $q,p\in (2,\infty)$,
\begin{equation}
\label{eq:Lq_estimates_comparison}
\E\sup_{t\in [0,T]}\|u(t)\|_{B^{1-s}_{q,p}(\cO)}^p \lesssim_{q,p,s,T} \E \|g\|_{L^p(0,T;L^q(\cO))}^p.
\end{equation} 
Clearly, by taking $q$ so large that $B^{1-s}_{q,p}(\cO)\embed C(\overline{\cO})$ by \eqref{eq:Besov_embedding_C}, the above yields \eqref{eq: cerrai 2025 estimate}. Let us point out that the improvement is on both sides of the estimates, and in many applications to nonlinear reaction-diffusion with non-trace class noise, the estimate \eqref{eq:Lq_estimates_comparison} is more flexible. For instance, $q$ can be taken finite and critical spaces for reaction-diffusion are typically of this form, see e.g.\ \cite[Subsection 1.4]{AVreaction-local}.

Finally, we remark that the limiting case $\zeta=2d/(d-2)$ is also very relevant for our purposes, and it is the one that leads to optimal estimates in terms of scaling, cf.\ Subsection \ref{ss:scaling_intro} and hence critical situations in nonlinear SPDEs. In this respect, the estimate \eqref{eq:Lq_estimates_comparison} can be thought of as a \emph{sub-critical} one, while the one in Theorem \ref{thm: spde with delta no time} is \emph{critical}. 

\def\polhk#1{\setbox0=\hbox{#1}{\ooalign{\hidewidth
  \lower1.5ex\hbox{`}\hidewidth\crcr\unhbox0}}} \def\cprime{$'$}

\end{document}